\newcommand\extrafootertext[1]{%
	\bgroup
	\renewcommand\thefootnote{\fnsymbol{footnote}}%
	\renewcommand\thempfootnote{\fnsymbol{mpfootnote}}%
	\footnotetext[0]{#1}%
	\egroup
}
\theoremstyle{plain}
\newtheorem{theorem}{Theorem}[section]
\newtheorem{lemma}[theorem]{Lemma}
\newtheorem{proposition}[theorem]{Proposition}
\newtheorem{corollary}[theorem]{Corollary}
\theoremstyle{definition}
\newtheorem{definition}[theorem]{Definition}
\newtheorem{remark}[theorem]{Remark}
\newtheorem*{remark*}{Remark}
\newtheorem{example}[theorem]{Example}
\newtheorem{question}[theorem]{Question}
\newtheorem{conjecture}[theorem]{Conjecture}
\newcommand*{\ie}{{\it i.e.}}
\newcommand*{\eg}{{\it e.g.}}
\newcommand*{\R}{\mathbb{R}} 
\newcommand*{\Z}{\mathbb{Z}} 
\newcommand*{\abs}[1]{\left\lvert#1\right\rvert} 
\newcommand*{\SV}[1]{\left\lVert#1\right\rVert} 
\newcommand*{\Dav}[1]{M(#1)} 
\newcommand*{\Link}[2]{\mathrm{Link}_{#2}({#1})} 
\newcommand*{\ldom}{\preceq}
\newcommand*{\gdom}{\succeq}
\newcommand*{\minit}{basic}
\title{Simplicial maps between spheres and Davis' manifolds with positive simplicial volume}
\author{Francesco Milizia\\
	{\small \textit{Scuola Normale Superiore, Palazzo della Carovana}}\\
	{\small \textit{Piazza dei Cavalieri, 7, 56126 Pisa, IT}}\\
	{\small \texttt{francesco.milizia@sns.it}}
	\\
}
\date{}
\begin{document}
	\maketitle
	
	\begin{abstract}
		We study the simplicial volume of manifolds obtained from Davis' reflection group trick, the goal being characterizing those having positive simplicial volume.
		In particular, we focus on checking whether manifolds in this class with nonzero Euler characteristic have positive simplicial volume (Gromov asked whether this holds in general for aspherical manifolds).
		This leads to a combinatorial problem about triangulations of spheres: we define a partial order on the set of triangulations --- the relation being the existence of a nonzero-degree simplicial map between two triangulations --- and the problem is to find the minimal elements of a specific subposet.
		We solve explicitly the case of triangulations of the two-dimensional sphere, and then perform an extensive analysis, with the help of computer searches, of the three-dimensional case.
		Moreover, we present a connection of this problem with the theory of graph minors.
	\end{abstract}
	
	\section{Introduction}
	\extrafootertext{\scriptsize{The author has been supported by the INdAM -- GNSAGA project CUP E55F22000270001.}}
	The work presented in this paper is motivated by the following question formulated by Mikhail Gromov.
	\begin{question}[{\cite[page 232]{Gromov93}}]\label{conj:sv_chi}
		Let $M$ be a closed (\ie, compact and without boundary) connected aspherical manifold.
		Suppose that the simplicial volume of $M$ vanishes.
		Then, does the Euler characteristic $\chi(M)$ vanish?
	\end{question}
	This question, which tries to establish a relation between two important invariants of manifolds, has stimulated a lot of work in the community studying simplicial volume, but a complete answer to it still seems to be widely out of reach.
	The recent paper \cite{LMR22} contains a discussion of various versions of \Cref{conj:sv_chi}, a list of special cases in which the conjectured implication is known to hold, and a collection of strategies towards positive and negative answers to it.
	
	The simplicial volume of a manifold $M$ is a nonnegative real number, denoted by $\SV{M}$; we recall its definition in \Cref{sec:pre}.
	The simplicial volume is invariant under homotopy equivalences between closed manifolds; at the same time, it can say a lot about the possible geometries a given manifold can be endowed with: for instance, it gives a lower bound for the minimal volume of a smooth manifold \cite[p.~220]{Gromov82}.
	Also the Euler characteristic provides similar connections between topology and geometry, motivating the search of precise relations between the two invariants.
	The aim of this paper is to study the simplicial volume of a specific class of manifolds, obtained from a construction due to Michael W.~Davis, and in particular to check whether these manifolds satisfy the implication stated in \Cref{conj:sv_chi}.
	
	Asphericity is a necessary assumption in \Cref{conj:sv_chi}: the simplest example of a nonaspherical manifold, the 2-dimensional sphere $S^2$, has vanishing simplicial volume, but $\chi(S^2) = 2$.
	Davis' construction, explained in \Cref{sec:davis_construction}, is an abundant source of aspherical manifolds.
	Over the last decades, its versatility has led to the discovery of closed aspherical manifolds enjoying a variety of exotic properties: having a fundamental group which is not residually finite \cite{Mess1990}, with unsolvable word problem \cite[Theorem 13.4]{Dav2002}, or admitting locally CAT(0) metrics but no smooth Riemannan metrics of nonpositive sectional curvatures \cite{DJL2012}, just to list some examples.
	
	In this paper, we consider only the simplest version (as described, for instance, in \cite{Dav2002}) of the construction, which is a particular case of the more general \emph{reflection group trick} introduced by Davis in \cite{Dav83}.
	It takes as input a triangulation $T$ of the $(n-1)$-dimensional sphere, and gives back a closed manifold of dimension $n$, that we denote with $\Dav{T}$.
	Hereafter, by triangulation we mean a simplicial complex whose geometric realization is homeomorphic to a given topological space.
	If the triangulation is \emph{flag} (see \Cref{sec:pre} for the definition of this combinatorial property), then the resulting manifold is aspherical.
	
	Most of the effort in this paper is put into studying flag triangulations of spheres and simplicial maps between them.
	A central notion in this work is the following.
	\begin{definition}\label{def:dominance}
		Let $T_1$ and $T_2$ be two triangulation of the $(n-1)$-dimensional sphere $S^{n-1}$.
		We say that $T_2$ dominates $T_1$, and write $T_1 \ldom T_2$, if there is a simplicial map from $T_2$ to $T_1$ of nonzero degree.
	\end{definition}
	The dominance relation gives a partial order to the set of (isomorphism classes of) triangulations of $S^n$.
	We prove in \Cref{sec:davis_construction} that, if $T_1 \ldom T_2$, then $\SV{\Dav{T_1}} \le \SV{\Dav{T_2}}$.
	In particular, if the simplicial volume of $\Dav{T}$ is positive, for a certain triangulation $T$, then the same holds for every triangulation that dominates $T$.
	This observation is the starting point for all the subsequent work in this paper.
	
	Consider the set of triangulations $T$ of $S^{n-1}$, considered up to simplicial isomorphism, such that $\SV{\Dav{T}} > 0$.
	As a consequence of the observation just stated, this set is completely determined by its minimal elements with respect to the dominance relation $\ldom$.
	The main problem addressed in this paper is finding these minimal elements.
	
	The case $n = 2$ (triangulations of $S^1$, giving rise to surfaces) is easy to understand: $\SV{\Dav{T}} > 0$ if and only if $T$ has at least $5$ vertices; the poset of triangulations is linear; there is only one minimal element in the subposet of triangulations giving positive simplicial volume --- the pentagon.
	
	\subsection{The three-dimensional case}
	When $n = 3$, the poset becomes much more complicated.
	Nonetheless, we prove in \Cref{sec:3dim} that the subposet of triangulations giving positive simplicial volume has --- again --- only one minimal element, which is a flag triangulation of $S^2$ having $9$ vertices.
	
	\begin{theorem}[{\Cref{thm:2dim}}]\label{thm:2dim_intro}
		There exists a triangulation of $S^2$, that we call $T_9$ (see \Cref{fig:T9}), which is flag, has $9$ vertices, and is such that the following conditions on a triangulation $T$ of $S^2$ are equivalent:
		\begin{enumerate}[label=(\roman*)]
			\item \label{it:2_pos_intro}
			The three-dimensional manifold $\Dav{T}$ has positive simplicial volume;
			\item \label{it:2_t9_intro}
			There is a simplicial map $T\to T_9$ of degree $1$;
			\item \label{it:2_dom_intro}
			$T_9 \ldom T$.
		\end{enumerate}
	\end{theorem}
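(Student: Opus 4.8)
Here is the approach I would take.

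The plan is to prove the cyclic implications \ref{it:2_t9_intro} $\Rightarrow$ \ref{it:2_dom_intro} $\Rightarrow$ \ref{it:2_pos_intro} $\Rightarrow$ \ref{it:2_t9_intro}. The first is immediate from \Cref{def:dominance}, since a degree-$1$ simplicial map is in particular a simplicial map of nonzero degree. The second follows from the monotonicity statement $T_1 \ldom T_2 \Rightarrow \SV{\Dav{T_1}} \le \SV{\Dav{T_2}}$ proved in \Cref{sec:davis_construction}, provided one also knows $\SV{\Dav{T_9}} > 0$. I would obtain the latter directly by checking that $T_9$ is flag and has no empty square (no induced $4$-cycle in its $1$-skeleton): then the right-angled Coxeter group $W_{T_9}$ is word-hyperbolic, so $\pi_1(\Dav{T_9})$ is a word-hyperbolic group which is the fundamental group of a closed aspherical $3$-manifold, and geometrization forces $\Dav{T_9}$ to be hyperbolic, whence $\SV{\Dav{T_9}} = \operatorname{vol}(\Dav{T_9})/v_3 > 0$ with $v_3$ the volume of the regular ideal tetrahedron in $\mathbb{H}^3$. (If $T_9$ should happen to carry empty squares, the same conclusion is reached by exhibiting a hyperbolic piece in its geometric decomposition.)

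The substance of the theorem is \ref{it:2_pos_intro} $\Rightarrow$ \ref{it:2_t9_intro}, which I would route through a structural description of the closed aspherical $3$-manifold $\Dav{T}$. Using that $\Dav{T}$ arises from a right-angled reflection construction, the essential tori of its geometric decomposition are governed by the induced $4$-cycles of $T$, and its geometric pieces can be described in terms of the complexes obtained from $T$ by cutting along these empty squares and capping off; consequently $\SV{\Dav{T}} > 0$ if and only if at least one of these blocks yields a hyperbolic manifold, which by the previous paragraph happens exactly when that block (suitably completed) has no empty square. One then builds the desired map by collapsing the graph-manifold part: a degree-$1$ simplicial map $T \to T_9$ is produced factoring through such a hyperbolic block $B$, so the problem reduces to showing that every empty-square-free flag triangulation $B$ of $S^2$ admits a degree-$1$ simplicial map onto $T_9$. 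This is where \Cref{sec:3dim} does its work: one proves, in the spirit of an excluded-minor argument, that under $\ldom$ the empty-square-free flag triangulations of $S^2$ form a family with the single minimal element $T_9$ --- a reduction lemma locates in every sufficiently large such triangulation an edge or a vertex that can be contracted while preserving the empty-square-free property, leaving only finitely many \minit\ triangulations, and a (computer-assisted) inspection exhibits an explicit degree-$1$ simplicial map onto $T_9$ from each of them.

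The main obstacle is exactly this combinatorial classification: formulating and proving the reduction lemma strongly enough that the base of the induction is finite, and then certifying, probably by machine, that each of the finitely many \minit\ triangulations maps onto $T_9$ with degree $1$. A secondary, more delicate point is making the dictionary between the geometric decomposition of $\Dav{T}$ and the combinatorics of $T$ precise (empty squares $\leftrightarrow$ JSJ tori; capped blocks $\leftrightarrow$ geometric pieces), and verifying that the collapse onto a hyperbolic block is realized by an honest simplicial map of degree $1$; I would control the degree by constructing the map explicitly from the block decomposition rather than by a homological count. Note finally that, once the cycle is closed, the a priori weaker condition \ref{it:2_dom_intro} is automatically promoted to \ref{it:2_t9_intro}, so no separate argument is required to replace a nonzero-degree map by a degree-$1$ one.
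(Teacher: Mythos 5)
There is a genuine gap, and it starts with a factual error about $T_9$ itself. The triangulation $T_9$ of \Cref{fig:T9} is \emph{not} square-free: it contains three empty squares (the ``lateral'' squares), and indeed no $9$-vertex flag triangulation of $S^2$ can avoid induced $4$-cycles, since a flag no-square $2$-sphere corresponds by Andreev's theorem to a compact right-angled hyperbolic polyhedron, which needs at least $12$ faces. Consequently $\Gamma_{T_9}$ is not word-hyperbolic and $\Dav{T_9}$ is not a hyperbolic manifold, so your primary argument for $\SV{\Dav{T_9}}>0$ fails; your parenthetical fallback (``exhibit a hyperbolic piece in its geometric decomposition'') is exactly the nontrivial point and is not carried out. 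The paper instead verifies, via the Behrstock--Hagen--Sisto criterion, that $\Gamma_{T_9}$ is relatively hyperbolic with respect to the subgroups generated by those three squares, and then invokes \Cref{prop:relhyp_coxeter} and \Cref{prop:relhyp_sv} to get positivity of $\SV{\Dav{T_9}}$.

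The same error propagates into your plan for \ref{it:2_pos_intro} $\Rightarrow$ \ref{it:2_t9_intro}: the family of ``empty-square-free flag triangulations'' cannot have $T_9$ as its minimal element, because $T_9$ does not belong to it; moreover, the blocks obtained by cutting along an empty square and capping with a cone always contain a square (the capped equator becomes a vertex link), so the reduction to square-free blocks does not even parse. The combinatorially relevant condition is rather that every square is a vertex link, and your proposed ``reduction lemma'' (contract edges while preserving the relevant property, leaving finitely many machine-checkable minimal cases) is neither proved nor obviously true, since an edge collapse in such a triangulation can create new empty squares. The paper's actual argument is more elementary and avoids geometrization and any computer-assisted classification: one performs \emph{elementary reductions} (collapsing an edge both of whose endpoints have valence $4$), shows via Gromov additivity that $\SV{\Dav{T}}=2\SV{\Dav{T'}}$ under such a reduction (\Cref{lemma:elementary_red}), and proves by a hand-run minimal-counterexample analysis (\Cref{prop:reduction_t9}) that any flag triangulation admitting no elementary reduction, other than the octahedron, maps onto $T_9$ with degree $1$ --- positivity of the simplicial volume is used only to rule out ending at the octahedron (whose Davis manifold is the $3$-torus). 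The non-flag case is then handled by cutting along empty triangles and inducting, again with Gromov additivity. Your JSJ dictionary (squares $\leftrightarrow$ essential tori) is a reasonable heuristic, and it is consistent with the paper's \Cref{rmk:rel_hyp}, but as a proof strategy it leaves both the dictionary and the finiteness/classification step unestablished.
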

	
	There are several reasons that make the case $n = 3$ tractable, compared to higher-dimensional cases that seem much more complicated.
	The first reason is that a 3-dimensional aspherical manifold has positive simplicial volume if and only if its fundamental group is relatively hyperbolic (with respect to a collection of proper subgroups).
	This fact, specific to dimension 3, and false in higher dimensions, follows from the geometrization theory of 3-manifolds.
	Moreover, there is an algorithmic procedure to check whether a manifold of the form $\Dav{T}$ has a relatively hyperbolic fundamental group (see \Cref{rmk:rel_hyp}).
	These facts alone don't imply anything about the set of minimal elements of the subposet mentioned above; but, at least, they already allow us to determine easily if a given triangulation belongs to the subposet or not.
	
	Our solution of the case $n = 3$ is rather elementary and does not use the geometrization theorem, but it is interesting to notice, by inspecting the proof of \Cref{thm:2dim_intro} presented in \Cref{sec:3dim}, how the steps used to manipulate triangulations correspond --- at the level of the associated Davis' manifolds --- to cutting operations along spheres and tori, \ie, to the operations one has to perform to decompose a manifold into its prime pieces and then into geometric pieces.
	
	The second, maybe even stronger, reason that explains the tractability of the case $n = 3$ comes from graph theory, more precisely from the theory of graph minors developed by Neil Robertson and Paul D.~Seymour.
	A minor of a graph is another graph obtained from the first one by erasing some edges, erasing some vertices and collapsing some edges.
	In \Cref{sec:minors} we prove the following:
	\begin{proposition}[{\Cref{prop:minor_s2}}]
		\label{prop:minor_s2_intro}
		Let $T_1$ and $T_2$ be two triangulations of $S^2$.
		Suppose that the 1-skeleton of $T_1$ is a minor of the 1-skeleton of $T_2$.
		Then, $T_1 \ldom T_2$.
		More precisely, there is a simplicial map $f:T_2 \to T_1$ with $\abs{\deg(f)} = 1$.
	\end{proposition}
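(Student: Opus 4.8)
The first move is to restate the minor hypothesis in a form adapted to triangulations. Recall that $1$-skel$(T_1)$ being a minor of $1$-skel$(T_2)$ means that $T_2$ contains pairwise disjoint connected subgraphs $(B_v)_{v\in V(T_1)}$, the \emph{branch sets}, so that for every edge $vw$ of $T_1$ there is an edge of $T_2$ joining $B_v$ to $B_w$. I would first upgrade this to what I will call an \emph{exact model}: a \emph{partition} $V(T_2)=\bigsqcup_{v} B_v$ into connected sets such that contracting each $B_v$ produces \emph{exactly} $1$-skel$(T_1)$. Leftover vertices of $T_2$ can be absorbed into adjacent branch sets because $T_2$ is connected; and no superfluous edge survives the contraction by an edge count — both $1$-skeleta are maximal planar, so have $3n_i-6$ edges; the contraction decreases the edge count, yet the simplified quotient is a simple planar graph on $n_1$ vertices containing $1$-skel$(T_1)$, hence has $\le 3n_1-6$ edges and therefore equals $1$-skel$(T_1)$. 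Consequently, in an exact model two vertices of $T_2$ in different branch sets are joined by an edge of $T_2$ iff the corresponding vertices of $T_1$ are joined by an edge.

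The main elementary operation is contraction of a \emph{contractible} edge. If $e=pq$ is an edge of a triangulation $T$ of $S^2$ lying in no empty triangle (equivalently, $p,q$ have exactly two common neighbours), then $T/e$ is again a triangulation of $S^2$, and the vertex map identifying $p$ with $q$ is a simplicial map $T\to T/e$ that collapses the two triangles on $e$ to edges and is bijective on all other faces, hence has degree $\pm1$. If $e$ has both endpoints inside a single branch set $B_v$, contracting it turns an exact model of $T_2$ over $T_1$ into an exact model of $T_2/e$ over $T_1$ with one fewer vertex. So, arguing by induction on $|V(T_2)|$, it is enough to treat an exact model that is \emph{nontrivial} (some $|B_v|\ge 2$) and to produce from it either a contractible internal edge (and recurse by the above) or a reduction to strictly smaller instances.

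For the remaining case — nontrivial exact model, no branch set containing a contractible edge — every internal edge of every branch set lies in an empty triangle of $T_2$, so empty triangles exist. The plan is then to \emph{cut along one}: pick (among all internal edges $pq$ and empty triangles $\Delta=\{p,q,r\}$ through them) one whose smaller side $D$ has the fewest interior vertices; since $\Delta$ separates $S^2$, write $T_2$ as two triangulated disks $\bar D_1,\bar D_2$ glued along $\partial\Delta$, cap each off with the triangle $\Delta$ to obtain triangulations $\hat D_1,\hat D_2$ of $S^2$ with strictly fewer vertices, and check (using the edge-count bookkeeping of Paragraph 1, and analysing how the branch sets meet $\partial\Delta$) that the exact model restricts to exact models of $\hat D_1,\hat D_2$ over the triangulations $T_1^{(1)},T_1^{(2)}$ obtained by cutting $T_1$ along the empty triangle $E$ onto which $\Delta$ is sent and capping. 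Applying the inductive hypothesis gives degree-$\pm1$ simplicial maps $\hat D_i\to T_1^{(i)}$; each is, by construction, a composition of the elementary moves above, so its vertex map sends each vertex of $T_2$ to its branch set, whence it carries the capping triangle $\Delta$ to the capping triangle $E$. Gluing the two maps along $\Delta\leftrightarrow E$ yields a simplicial map $T_2\to T_1$, and counting preimages at a regular value in the interior of one side shows its degree equals that of the corresponding piece, namely $\pm1$.

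The routine parts here are Paragraphs 1–2 and the degree computations; the real work is the last paragraph. The delicate points are: ensuring that when no contractible internal edge exists one can always choose an empty triangle of $T_2$ whose cutting \emph{descends} to exact models on both capped pieces (this needs, besides the Euler-characteristic count, a careful treatment of the awkward subcases where two vertices of the chosen triangle lie in a common branch set — exactness then forces that branch set to be large, and one must either relocate the contractible internal edge or cut along another empty triangle — and where the triangle of $T_1$ it maps to is a face rather than an empty triangle); and tracking, through the recursion, the designated capping face so that the glued map is well defined. This ``cut along empty triangles'' recursion is precisely the combinatorial shadow of decomposing the associated Davis manifold along $2$-spheres, as anticipated in the introduction.
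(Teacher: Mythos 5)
Your Paragraphs 1--2 are sound: the upgrade of the minor model to an ``exact model'' via the planar edge count, and the reduction step of contracting an edge interior to a branch set that lies in no empty triangle (a degree-$\pm 1$ simplicial collapse preserving the model), are both correct. The proof breaks down exactly where you say the real work is, and the difficulty is worse than an ``awkward subcase'': in the remaining situation every empty triangle you propose to cut along is, by construction, chosen through an edge $pq$ interior to some branch set $B_v$, so two of its three vertices carry the same label $v$ and its image in $T_1$ is an edge or a single vertex --- never a triangle. Hence the object you cut $T_1$ along, ``the empty triangle $E$ onto which $\Delta$ is sent'', does not exist, and the pieces $T_1^{(1)},T_1^{(2)}$ are undefined; the configuration you relegate to a parenthetical (two vertices of $\Delta$ in a common branch set) is in fact the \emph{only} configuration your recursion ever meets, and no argument is given for ``relocating'' the edge or producing a separating empty triangle whose three vertices lie in three distinct branch sets spanning a simplex or empty triangle of $T_1$. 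Moreover, even granting such a cut, a branch set meeting both sides of $\Delta$ may restrict to a disconnected set on one side, and your inductive hypothesis only provides \emph{some} degree-$\pm1$ simplicial maps on the capped pieces, not maps sending each vertex to the label of its branch set and the capping triangle to $E$; both the gluing and the final preimage count require that compatibility, so the induction would have to be strengthened to ``there is a degree-$\pm1$ map realizing the given exact model'', and establishing that stronger statement is precisely what is missing. As it stands, your argument proves the proposition only when the exact model can be reduced to the trivial one by contractions inside branch sets.

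For comparison, the paper's proof avoids any recursion or cutting: starting from the branch-set labelling it fixes a spanning tree inside each branch set and one witness edge for each edge of $T_1$, shows by an Euler-characteristic count that this ``preferred'' subgraph $P$ cuts $T_2$ into exactly $f_2(T_1)$ faces, matches these faces bijectively with the $2$-simplices of $T_1$ via the labels on their boundary cycles, and from this bijection reads off simultaneously that the labelling is a simplicial map and, by a local count over a single $2$-simplex, that its degree is $\pm1$. If you want to salvage your inductive scheme, the compatibility issues above (connectivity of restricted branch sets, behaviour on the capping triangle, and the nonexistence of a triangle of $T_1$ under the cut) are the points that must be resolved first.
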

	This opens the door to the powerful results of graph minor theory, that imply at once not only that it is possible to decide algorithmically whether a triangulation of $S^2$ belongs to the ``positive simplicial volume'' subposet (see \Cref{cor:dim2_alg}), but also that this subposet has a finite number of minimal elements.
	One caveat: these results don't actually give an algorithm, indications on how to find the minimal elements, or information about how many there are.
	Our proof of \Cref{thm:2dim_intro} provides concretely the unique minimal element.
	
	Among the consequences of \Cref{prop:minor_s2_intro}, we also obtain the following result about simplicial maps between triangulated spheres:
	\begin{proposition}[{\Cref{cor:dim2_deg}}]
		Let $T$ be a triangulation of $S^2$.
		There is a natural number $d_T$ such that, if $S$ is any triangulation of $S^2$ that dominates $T$, then it does so via a simplicial map $f$ with $\abs{\deg(f)} \le d_T$.
	\end{proposition}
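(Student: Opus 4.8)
The plan is to deduce this from \Cref{prop:minor_s2_intro} together with the graph minor theorem of Robertson and Seymour. Fix $T$, and for each triangulation $S$ of $S^2$ with $T \ldom S$ let $\mu(S)$ denote the minimum of $\abs{\deg f}$ taken over all simplicial maps $f\colon S\to T$ of nonzero degree; since this is a minimum of a nonempty set of positive integers, $\mu(S)$ is a well-defined element of $\N_{>0}$. The statement to be proved is precisely that $\mu$ is bounded on the (nonempty, as $T\ldom T$) set $\mathcal{P}$ of triangulations dominating $T$, and then one takes $d_T=\max_{S\in\mathcal{P}}\mu(S)$. So I would assume the contrary and derive a contradiction.

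If $\mu$ is unbounded on $\mathcal{P}$, then one can pick $S_1,S_2,S_3,\dots\in\mathcal{P}$ greedily so that $\mu(S_1)<\mu(S_2)<\mu(S_3)<\cdots$, using at each step that $\mu$ attains arbitrarily large values. Apply now the well-quasi-ordering theorem for graph minors to the sequence of $1$-skeleta $\mathrm{sk}_1(S_1),\mathrm{sk}_1(S_2),\dots$: there exist indices $i<j$ such that $\mathrm{sk}_1(S_i)$ is a minor of $\mathrm{sk}_1(S_j)$.

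At this point \Cref{prop:minor_s2_intro}, applied to the pair $(T_1,T_2)=(S_i,S_j)$, supplies a simplicial map $g\colon S_j\to S_i$ with $\abs{\deg g}=1$. Choosing a simplicial map $h\colon S_i\to T$ realizing $\mu(S_i)$, the composite $h\circ g\colon S_j\to T$ is again simplicial, and by multiplicativity of the degree of maps between closed oriented surfaces one gets $\abs{\deg(h\circ g)}=\abs{\deg h}\cdot\abs{\deg g}=\mu(S_i)>0$. In particular $h\circ g$ is a competitor in the definition of $\mu(S_j)$, so $\mu(S_j)\le\mu(S_i)$; but $i<j$ gives $\mu(S_i)<\mu(S_j)$, a contradiction. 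Hence $\mu$ is bounded, which is the claim; one sees moreover that $d_T=1$ exactly when every triangulation dominating $T$ contains $\mathrm{sk}_1(T)$ as a minor.

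The only substantial input is the graph minor theorem; everything else is routine bookkeeping with compositions of simplicial maps and the product formula for degrees, so I do not expect a genuine obstacle. I would only remark, as is typical for arguments invoking well-quasi-ordering, that the proof is non-constructive: it produces no explicit value for $d_T$ and no procedure to compute one, mirroring the ineffectivity already noted after \Cref{prop:minor_s2_intro}.
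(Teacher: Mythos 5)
Your proof is correct and rests on exactly the same ingredients as the paper's: the Robertson--Seymour well-quasi-ordering theorem (\Cref{thm:minors}) applied to the $1$-skeletons of dominating triangulations, \Cref{prop:minor_s2} to produce a degree-$\pm 1$ map, and multiplicativity of degree under composition --- the paper merely packages the well-quasi-ordering as ``the set of dominating triangulations has finitely many minor-minimal elements'' and takes $d_T$ to be the largest degree needed for those, whereas you argue by contradiction with a sequence of strictly increasing minimal degrees, which is the same argument in different clothing. One caveat: your closing remark that $d_T=1$ holds \emph{exactly} when every dominating triangulation contains the $1$-skeleton of $T$ as a minor is justified only in one direction, since a converse to \Cref{prop:minor_s2} (a degree-$\pm1$ simplicial map forcing minor containment) is not established in the paper; this aside is not needed for the statement.
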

	
	We do not know if this result can be generalized to higher dimensions, and whether the number $d_T$ can always be taken equal to $1$.
	
	\subsection{The four-dimensional case}
	In higher dimensions we do not have at our disposal the rich theory just described for the 3-dimensional case.
	We don't know if the ``positive simplicial volume'' subposet has a finite number of minimal elements, and in general we are not able to tell, given a triangulation, whether it belongs to the subposet.
	
	In \Cref{sec:4dim} we focus on the case $n = 4$, and consider a different subposet: the one consisting of flag triangulations $T$ of $S^3$ such that $\chi(\Dav{T}) \neq 0$.
	The advantage in considering this subposet is that it is easy to tell whether a triangulation belongs to it or not.
	To test \Cref{conj:sv_chi}, we should check that every triangulation in this ``nonzero Euler characteristic'' subposet gives a manifold with positive simplicial volume; again, it is enough to perform this check on the minimal elements of the subposet.
	Such minimal elements constitute the ``basic'' examples on which \Cref{conj:sv_chi} can be tested, and we find that there are at least two of them:
	\begin{proposition}
		There exist two distinct triangulations of $S^3$, that we call $T_{10}$ and $T_{12}$, with the following properties:
		\begin{enumerate}[label=(\arabic*)]
			\item \label{it:flag_intro} They are flag;
			\item \label{it:euler_intro} Their corresponding Davis' manifolds have nonzero Euler characteristic;
			\item They do not dominate (in the sense of \Cref{def:dominance}) other triangulations satisfying properties \ref{it:flag_intro} and \ref{it:euler_intro}.
		\end{enumerate}
	\end{proposition}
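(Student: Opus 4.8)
The plan is to verify, for the two explicit triangulations $T_{10}$ and $T_{12}$ of $S^3$ (with $10$ and $12$ vertices, respectively) constructed in \Cref{sec:4dim}, the three listed properties. Property \ref{it:flag_intro}, property \ref{it:euler_intro}, and the fact that $T_{10}\neq T_{12}$ (they have different numbers of vertices) are finite computations. Flagness is checked directly, by listing the cliques of the $1$-skeleton and confirming that each of them spans a simplex. For property \ref{it:euler_intro} one uses the formula expressing $\chi(\Dav{T})$ through the face numbers of $T$ recalled in \Cref{sec:davis_construction}: for a flag triangulation $T$ of $S^3$, the Dehn--Sommerville relations $f_2(T)=2f_3(T)$ and $f_0(T)-f_1(T)+f_2(T)-f_3(T)=0$ reduce it to the numerical criterion
\[
\chi(\Dav{T})\neq 0 \quad\Longleftrightarrow\quad f_1(T)\neq 5f_0(T)-16,
\]
so it suffices to count the vertices and edges of $T_{10}$ and $T_{12}$ and to observe that the resulting quantities do not vanish.

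The substantial part is minimality. The key observation is this: if $S$ and $T$ are triangulations of $S^3$ with $S\ldom T$, witnessed by a simplicial map $f\colon T\to S$ of nonzero degree, then $|f|$ is surjective (a map of nonzero degree between closed oriented $3$-manifolds is), and since $f$ carries the interior of a simplex $\sigma$ of $T$ onto the interior of the simplex spanned by the images of the vertices of $\sigma$, every simplex of $S$ has the form $f(\sigma)$. Hence $f$ is surjective on vertices, $S$ is isomorphic to the quotient of $T$ obtained by collapsing each fibre of $f$, and $f$ factors through that quotient; in particular $\abs{V(S)}\le\abs{V(T)}$, with equality only when $f$ is an isomorphism. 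Therefore a triangulation of $S^3$ dominated by $T_{10}$ (resp.\ $T_{12}$) and not isomorphic to it is a proper vertex-quotient of $T_{10}$ (resp.\ $T_{12}$), hence has at most $9$ (resp.\ $11$) vertices. On the other hand a flag triangulation of $S^3$ has at least $8$ vertices, and one with exactly $8$ vertices has $24$ edges: in a flag triangulation $T$ of $S^3$ the link $\Link{v}{T}$ is a flag triangulation of $S^2$, whence $\deg(v)\ge 6$, while $\deg(v)\neq\abs{V(T)}-1$ (otherwise $T$ would be the cone on $\Link{v}{T}$, a ball); these two facts force $\abs{V(T)}\ge 8$, and when $\abs{V(T)}=8$ they force every vertex to have degree $6$, so $f_1(T)=24=5\cdot 8-16$ and thus $\chi(\Dav{T})=0$. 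Combining this with the criterion above, minimality reduces to a finite assertion: no partition of $V(T_{10})$ into $9$ blocks, and no partition of $V(T_{12})$ into $9$, $10$ or $11$ blocks, yields a quotient complex that is simultaneously (a) homeomorphic to $S^3$, (b) such that the quotient map from the original triangulation has nonzero degree, (c) flag, and (d) such that its Davis' manifold has nonzero Euler characteristic.

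I would discharge this by an exhaustive, computer-assisted search over the finitely many relevant partitions --- there are $\binom{10}{2}$ partitions of $V(T_{10})$ into $9$ blocks, and $\binom{12}{2}$, respectively finitely many more, partitions of $V(T_{12})$ into $11$, $10$, or $9$ blocks. For a fixed partition, each of (a)--(d) is elementary to test: (a) by recognising the quotient, when it is a closed $3$-manifold (every vertex link a triangulated $2$-sphere) and simply connected, as $S^3$ via the Poincar\'e conjecture --- a decidable problem that for complexes this small poses no practical difficulty; (b) by a combinatorial count, with orientation signs, of the top-dimensional simplices of the source mapped bijectively onto a fixed top-dimensional simplex of the quotient; (c) by the clique test; (d) by inspecting $f_0$ and $f_1$. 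One finds that for every partition at least one of (a)--(d) fails, which yields property (3) for both $T_{10}$ and $T_{12}$.

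The main obstacle is the completeness and correctness of this enumeration rather than any single conceptual step. Most partitions fail already at (a): identifying a pair of vertices typically creates a point whose link is not a sphere, so only a handful of quotients require the full test. Among those, the subtlest individual case --- specific to $T_{12}$ --- is to exclude that $T_{12}$ dominates $T_{10}$, i.e.\ that some $10$-block partition of $V(T_{12})$ has quotient isomorphic to $T_{10}$ via a simplicial map of nonzero degree; verifying that any such quotient map has degree $0$ is exactly what certifies that $T_{10}$ and $T_{12}$ are two genuinely distinct minimal elements of the subposet (the reverse possibility, that $T_{10}$ dominates $T_{12}$, being excluded for free since $T_{12}$ has strictly more vertices than $T_{10}$). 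One could hope to replace the search by a structural, minor-theoretic argument as in the three-dimensional case, but that machinery is not available in this dimension, so the direct enumeration seems to be the way.
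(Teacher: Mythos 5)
Your reduction is sound and would establish the statement, but it takes a genuinely different route from the paper in the decisive step. The shared part is the observation that a nonzero-degree simplicial map between triangulated spheres is surjective on simplices, so a dominated, nonisomorphic triangulation is a proper vertex-quotient with strictly fewer vertices; the paper states and uses exactly this. From there, however, the paper argues structurally rather than by enumeration: flag triangulations of $S^3$ with at most $9$ vertices are suspensions and have $\gamma_2=0$ (\Cref{rem:9vert}), so $T_{10}$ is \minit{} by vertex count alone; for $T_{12}$ it first shows (\Cref{thm:less12}, via the notion of incremental triangulations and \Cref{thm:incremental}) that $T_{10}$ is the only \minit{} triangulation with at most $11$ vertices, which by transitivity reduces minimality of $T_{12}$ to the single assertion $T_{10}\not\ldom T_{12}$, and that assertion is then proved by a short human-readable case analysis (\Cref{prop:T12_minimal}) exploiting the automorphisms of $T_{12}$ and the sizes of edge links. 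You instead propose to discharge everything by an exhaustive check of the finitely many vertex partitions of $T_{10}$ and $T_{12}$ into $9$, $10$ or $11$ blocks, testing each quotient for being a flag $S^3$-triangulation with $\chi(\Dav{\cdot})\neq 0$ receiving a nonzero-degree quotient map; this is a leaner reduction (no classification of small flag spheres is needed, only quotients of the two given complexes), it is clearly feasible, and the paper itself remarks that the crucial case $T_{10}\not\ldom T_{12}$ could be settled by machine enumeration, while preferring a proof ``understandable by humans''. Two caveats: the weight of your argument rests on a computer search that you describe but do not carry out, so as written the minimality claim is a plan whose verification is delegated; and you take for granted that the $12$-vertex flag complex is a triangulation of $S^3$, which is genuinely nontrivial --- the paper cites Venturello for it --- and is part of the burden of the existence statement. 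The auxiliary facts you invoke (surjectivity on simplices and the fibre-quotient description, the bound $\abs{V(T)}\ge 8$ for flag triangulations of $S^3$ with $\chi(\Dav{T})=0$ in the $8$-vertex case, and the criterion $\chi(\Dav{T})\neq 0 \iff f_1(T)\neq 5f_0(T)-16$) are all correct and match the paper's $\gamma_2$ formalism.
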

	The triangulations $T_{10}$ and $T_{12}$ have 10 and 12 vertices respectively.
	The manifold $\Dav{T_{10}}$ is homeomorphic to the product of two surfaces of genus 5; therefore, it has positive simplicial volume.
	We have not been able to reach the same conclusion for $\Dav{T_{12}}$.
	We leave it as an open question (which is a particular instance of \Cref{conj:sv_chi}):
	\begin{question}
		Does $\Dav{T_{12}}$ have positive simplicial volume?
	\end{question}
	
	To search for other minimal triangulations, it is possible to perform random ``explorations'' in the poset of flag triangulations of $S^3$.
	The details of such a procedure, and the results obtained by the author by implementing it on a computer (a C++ implementation is available in the GitHub repository \cite{simplicial-spheres}), are discussed in \Cref{sec:4dim}.
	The algorithm produces flag triangulations satisfying the following condition: every $1$-simplex is contained in a \emph{square}, \ie, a full subcomplex homeomorphic to $S^1$ induced by $4$ vertices.
	This is justified by the following proposition:
	\begin{proposition}[{\Cref{prop:many_squares}}]
		Let $T$ be a flag triangulation of $S^3$, with $\chi(\Dav{T}) \neq 0$, that does not dominate any other (nonisomorphic) triangulation with the same properties.
		Then, either $T$ is isomorphic to $T_{10}$, or every edge of $T$ is contained in a square.
	\end{proposition}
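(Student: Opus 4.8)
The plan is to argue by contradiction. Suppose $T$ is flag, $\chi(\Dav{T}) \neq 0$, $T$ dominates no nonisomorphic triangulation sharing these two properties, and yet $T \not\cong T_{10}$ while some edge $e = \{u,v\}$ of $T$ is contained in no square. I will produce a nonisomorphic flag triangulation $T'$ of $S^3$ with $\chi(\Dav{T'}) \neq 0$ and a nonzero-degree simplicial map $T \to T'$, which contradicts the minimality of $T$.

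The main tool is edge contraction. In a flag complex every edge automatically satisfies the link condition $\Link{u}{T} \cap \Link{v}{T} = \Link{\{u,v\}}{T}$: if $\sigma$ is a face with $\sigma \cup \{u\}$ and $\sigma \cup \{v\}$ both in $T$, then every edge of $\sigma \cup \{u,v\}$ lies in $T$ (including $\{u,v\} = e$), so $\sigma \cup \{u,v\} \in T$ by flagness. Hence, by the standard fact that an edge satisfying the link condition in a closed PL manifold yields, upon contraction, a PL-homeomorphic triangulation together with a degree-$\pm1$ simplicial map, the identification $q\colon T \to T/e$ is such a map onto a triangulation of $S^3$, with $f_0(T/e) = f_0(T) - 1$. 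The essential point is that squarelessness of $e$ keeps $T/e$ flag. Let $K$ be a clique in the $1$-skeleton of $T/e$ containing the collapsed vertex $w = q(u) = q(v)$, and put $K_0 = K \setminus \{w\}$; then $K_0$ is a clique of $T$, and each $\{w,a\}$ with $a \in K_0$ is the image of $\{u,a\}$ or $\{v,a\}$ in $T$. Were some $a \in K_0$ adjacent in $T$ only to $u$ and some $b \in K_0$ only to $v$, then $u, a, b, v$ (in this cyclic order) would span an induced $4$-cycle through $e$ — a square — which is excluded; so $K_0$ lifts through $u$ (or through $v$), making $K_0 \cup \{u\}$ a clique, hence a simplex of $T$ by flagness, and $K$ a simplex of $T/e$. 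Finally, the link $\Link{e}{T}$ is a flag triangulation of $S^1$, hence a cycle with some number $\ell \geq 4$ of vertices, and a direct count of edges gives $f_1(T/e) = f_1(T) - 1 - \ell$.

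Now I combine this with the formula for the Euler characteristic of Davis' manifolds. That formula, together with the Dehn--Sommerville relations for triangulations of $S^3$, gives $\chi(\Dav{S}) \neq 0$ if and only if $16 - 5f_0(S) + f_1(S) \neq 0$, so the previous paragraph yields $16 - 5f_0(T/e) + f_1(T/e) = \bigl(16 - 5f_0(T) + f_1(T)\bigr) + (4 - \ell)$. Since $T/e$ is a nonisomorphic flag triangulation of $S^3$ dominated by $T$, minimality forces $\chi(\Dav{T/e}) = 0$, that is, $16 - 5f_0(T) + f_1(T) = \ell - 4$. In particular this integer is positive, so $\ell \geq 5$; and as it depends only on $T$, every squareless edge of $T$ has a link of one and the same length $\ell \geq 5$. (The same argument applied to a squareless edge of $T/e$ whose link has length $\neq 4$ would likewise contradict minimality, which further restricts the local structure.)

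It remains to promote this rigidity to $T \cong T_{10}$, and this is the hard part. I would examine the configuration at a squareless edge $e = \{u,v\}$: its link $\Link{e}{T}$ is an induced $\ell$-cycle, the full subcomplex of $T$ on $\{u,v\} \cup V(\Link{e}{T})$ is precisely the join $\{u,v\} \ast \Link{e}{T}$ (a combinatorial $3$-ball), and $T$ is obtained from it by attaching, along the suspension $\partial(\{u,v\} \ast \Link{e}{T})$, a second combinatorial $3$-ball $R$ whose interior contains all remaining vertices of $T$; squarelessness of $e$ is exactly the statement that the two disks filling $\Link{e}{T}$ inside the vertex-links of $u$ and of $v$ have no edge between their interior vertices. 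The plan is to deduce from this, together with the contraction-rigidity above (via a carefully chosen sequence of admissible contractions inside $R$ and along $\Link{e}{T}$, each verified to preserve flagness, to stay in $S^3$, and — through the $4-\ell$ bookkeeping — to keep the Davis manifold's Euler characteristic nonzero), that every vertex of $T$ is joined to all of $\Link{e}{T}$, so that $R$, and therefore $T$, is a join: $T \cong A \ast \Link{e}{T}$ with $A$ a cycle, say an $m$-gon. Since the Davis manifold of a join is the product of the Davis manifolds, $16 - 5f_0(T) + f_1(T)$ equals $(m-4)(\ell-4)$, so the equation $16 - 5f_0(T) + f_1(T) = \ell - 4$ forces $m = 5$; thus $T$ is the join of a pentagon with the $\ell$-gon $\Link{e}{T}$. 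Finally, since the pentagon dominates every larger cycle by a degree-$1$ simplicial map (the one-dimensional case of the problem), $T$ dominates the join of two pentagons, which is $T_{10}$; if $\ell \geq 6$ this together with $T \not\cong T_{10}$ contradicts minimality, while $\ell = 5$ gives $T \cong T_{10}$ outright — a contradiction either way. I expect the genuine obstacle to be exactly the claim that $R$ is a join (equivalently, that every vertex of $T$ is adjacent to all of $\Link{e}{T}$); an alternative endgame would be to bound $f_0(T)$ from above using the contraction-rigidity and then dispose of the finitely many remaining triangulations by direct inspection.
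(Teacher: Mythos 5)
Your first half coincides with the paper's argument: collapse the squareless edge $e$ (flagness guarantees the link condition, squarelessness preserves flagness), compute $\gamma_2(T/e)=\gamma_2(T)+4-\ell$ where $\ell$ is the number of vertices of $\Link{e}{T}$, and use minimality to force $\gamma_2(T/e)=0$, hence $\gamma_2(T)=\ell-4>0$ and $\ell\ge 5$. Up to this point everything you write is correct and is exactly what the paper does.

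The gap is in the endgame. Having a squareless edge whose link has at least $5$ vertices, the paper concludes immediately via \Cref{lemma:edgelink5}: such an edge yields, by a direct and purely local construction, a simplicial map $T\to T_{10}$ of degree $\pm 1$ (the link cycle is collapsed monotonically onto one pentagon of the join, and the remaining vertices --- the endpoints of $e$, the vertices adjacent to only one of them, and the vertices adjacent to neither --- are distributed in cyclic order onto the other pentagon; squarelessness and flagness are what make this assignment simplicial). Thus $T\gdom T_{10}$, contradicting minimality since $T\not\cong T_{10}$. You instead try to prove the much stronger structural claim that $T$ is a join $A\star\Link{e}{T}$ with $A$ a pentagon, i.e.\ that every vertex of $T$ is adjacent to all of $\Link{e}{T}$. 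You yourself flag this as the genuine obstacle, and indeed nothing in your sketch establishes it: the ``carefully chosen sequence of admissible contractions'' is not specified, and there is no reason the local data at $e$ should propagate to such global rigidity --- a priori $T$ can contain many vertices far from $e$ with no adjacency to $\Link{e}{T}$ at all, and the hypotheses give you no control over them. The key realization you are missing is that no global structure is needed: the dominance $T\gdom T_{10}$ can be read off from the neighbourhood of the squareless long-linked edge alone, which is precisely the content of the paper's \Cref{lemma:edgelink5}. As written, your proof establishes $\ell\ge 5$ for every squareless edge but does not derive the required contradiction, so it is incomplete.
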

	
	The number of \emph{pairwise nonisomorphic} triangulations that have been constructed by running the algorithm is reported in \Cref{table:summary}.
	\begin{table}[ht]
		\centering
		\begin{tabular}{|c|c|c|}
			\hline
			N. of triangulations & dominating $T_{10}$ & dominating $T_{12}$ \\
			\hline
			4\,075\,183 & 4\,074\,851 ($\approx 99.992\,\%$) & 3\,995\,971 ($\approx 98.056\,\%$)\\
			\hline
		\end{tabular}
		\caption{Summary of the results of obtained by running the implemented algorithm, in which triangulations with up to $\approx 50$ vertices have been constructed and checked.
		All of them --- except the one with $8$ vertices --- dominate $T_{10}$ or $T_{12}$ (or both).}
		\label{table:summary}
	\end{table}
	All the triangulations counted in the summary of \Cref{table:summary} are flag and satisfy the condition on the squares.
	The leftmost column includes, among the others, the (unique) flag triangulation of $S^3$ with $8$ vertices, whose corresponding Davis' manifold has vanishing Euler characteristic.
	Of course, this triangulation dominates neither $T_{10}$ nor $T_{12}$.
	Every other triangulation that has been constructed gives nonvanishing Euler characteristic; in fact, according to a conjecture of Lutz and Nevo (see \Cref{conj:collapses}), the triangulation with $8$ vertices should be the only flag triangulation with $\chi(\Dav{T}) = 0$ that satisfies the condition on the squares mentioned above.
	
	To determine whether a triangulation $T$ dominates $T_{10}$ or $T_{12}$, one could list all simplicial maps from $T$ to $T_{10}$ or $T_{12}$, and then compute their degree; however, this procedure is prohibitively long even for relatively small triangulations with $\approx 20$ vertices.
	To overcome this difficulty, we introduce in Subsection \ref{ssec:local} a special kind of simplicial maps between triangulations, having a predetermined structure, and that always have degree $\pm 1$.
	The implemented algorithm, once a triangulation $T$ has been constructed, checks whether there is a simplicial map from $T$ to $T_{10}$ or $T_{12}$ of this special kind --- this turns out to be feasible even for $T$ having hundreds of vertices.
	In this way, some \emph{false negatives} could occur, \ie, a triangulation $T$ might dominate $T_{10}$ or $T_{12}$ but \emph{not} with a map of that special kind, and the dominance relation \emph{would not} be detected.
	In practice, this didn't turn out to be a problem: apart from the one with $8$ vertices, every other constructed triangulation has been found to dominate either $T_{10}$, $T_{12}$, or both, via a special map (the figures in the last two columns of \Cref{table:summary} are to be intended in this way). Indeed, the vast majority, according to the experiments, dominates $T_{10}$.

	\subsection*{Structure of the paper}
	In \Cref{sec:pre}, we recall the definition of simplicial volume, and the basic notation and constructions concerning simplicial complexes.
	In \Cref{sec:3dim}, we characterize, using the dominance relation, the triangulations $T$ of $S^2$ such that $\SV{\Dav{T}} > 0$.
	In \Cref{sec:4dim}, we discuss the case where $T$ is a flag triangulation of $S^3$, our strategy to find the minimal elements of the poset of flag triangulations with $\chi(\Dav{T}) \neq 0$, and the results of an implementation of this strategy on a computer.
	A C++ implementation is available in a GitHub repository \cite{simplicial-spheres}.
	In \Cref{sec:minors}, we establish the connection between our problem and graph minor theory, and discuss its theoretical and algorithmic consequences.
	Finally, in \Cref{sec:questions}, we consider several questions related to the problems studied in this paper.
	
	\subsection*{Acknowledgements}
	I am grateful to my PhD advisor Roberto Frigerio, for his support throughout the development of this project.
    Moreover, I wish to thank several people for helpful discussions and comments about this work and related topics: Christos Athanasiadis, Giuseppe Bargagnati, Federica Bertolotti, Pietro Capovilla, Jean-Fran\c{c}ois Lafont, Kevin Li, Marco Moraschini, Stefano Riolo, Lorenzo Venturello.

	\section{Preliminaries}\label{sec:pre}
	
	\subsection{Simplicial volume}
	Let $M$ be a closed oriented $n$-dimensional manifold.
	Its singular homology in degree $n$, with integer coefficients, is isomorphic to $\Z$, with the unit $1 \in \Z$ corresponding to a class in $H_n(M;\Z)$ called the fundamental class of $M$, denoted by $[M]$.
	The inclusion $\Z \hookrightarrow \R$ induces a change-of-coefficients map in homology.
	The image of $[M]$ under this change of coefficients is denoted by $[M]_\R$, is an element of $H_n(M;\R) \cong \R$, and is called the real fundamental class of $M$.
	
	The simplicial volume of $M$ is defined \cite{Gromov82} as the infimum of the $\ell^1$-norm of representatives of $[M]_\R$ (which are finite combinations of singular simplices, with real coefficients):
	\[ \SV{M} = \inf\left\{ \sum_{i=1}^k \abs{a_i} \colon \sum_{i=1}^k a_i\sigma_i \text{ singular cycle representing } [M]_\R \right\}.\]
	We now list some classical facts about simplicial volume that will be used throughout the paper.
	In particular, the first fact listed below, despite being very elementary, will be fundamental in our approach for characterizing Davis' manifolds with positive simplicial volume.
	\begin{itemize}
		\item Let $f:M \to N$ be a continuous map between oriented closed manifolds.
		Then, $\abs{\deg(f)} \cdot \SV{N} \le \SV{M}$.
		This is proved by considering the push-forward via $f$ of representatives of $[M]_\R$.
		In particular, if $M$ admits a self-map $f:M \to M$ with $\abs{\deg(f)} \ge 2$, then $\SV{M} = 0$.
		For instance, $\SV{S^n} = 0$ for every $n \ge 1$.
		\item $\SV{M}\SV{N} \le \SV{M \times N} \le C\SV{M}\SV{N}$, where $C$ is a constant depending only on $\dim(M\times N)$.
		The second inequality is straightforward, because given representatives for $[M]_\R$ and $[N]_\R$ one can easily construct a representative for $[M\times N]_\R$.
		The other inequality is proved by using bounded cohomology \cite{Gromov82, Frigerio2017}.
		\item Oriented closed connected hyperbolic manifolds have positive simplicial volume \cite{Thurston80, Gromov82}.
		In particular, oriented closed surfaces of genus $\ge 2$ have positive simplicial volume.
	\end{itemize}
	The last item has many generalizations, the general theme being that ``enough'' negative curvature implies positivity of simplicial volume.
	For instance, a closed Riemannian manifold with strictly negative sectional curvatures has positive simplicial volume \cite{IY82}.
	Several results of this kind are listed in \cite[Example 3.1]{LMR22}.
	We will use the following one, in which the negative-curvature condition is replaced by an assumption on the fundamental group.
	\begin{proposition}[{\cite[Proposition 1.6]{BH2013}}]\label{prop:relhyp_sv}
		Let $M$ be an $n$-dimensional closed connected manifold ($n \ge 2$) which is aspherical and whose fundamental group is relatively hyperbolic with respect to a finite collection of proper subgroups.
		Then the simplicial volume of $M$ is positive.
	\end{proposition}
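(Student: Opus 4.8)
The plan is to translate positivity of $\SV{M}$ into a statement about bounded cohomology via Gromov's duality principle, and then to exploit the surjectivity of the comparison map in the \emph{relative} bounded cohomology of a relatively hyperbolic pair. Passing if necessary to the orientation double cover (which is again closed, connected, aspherical, has relatively hyperbolic fundamental group since a finite-index subgroup of a relatively hyperbolic group is relatively hyperbolic, and satisfies $\SV{M}>0$ iff its simplicial volume is positive), we may assume $M$ is oriented. Write $\Gamma = \pi_1(M)$ and let $\mathcal{P} = \{P_i\}_{i=1}^k$ be the peripheral family. Since $M$ is aspherical it is a model for $B\Gamma$, so $H^*(M;\R) \cong H^*(\Gamma;\R)$ and, by Gromov's mapping theorem, $H^*_b(M;\R) \cong H^*_b(\Gamma;\R)$, compatibly with the comparison maps $c^*$. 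By the duality principle, for a closed oriented connected $n$-manifold one has $\SV{M} > 0$ if and only if the coclass $[M]^\vee \in H^n(M;\R)$ dual to the fundamental class lies in the image of $c^n\colon H^n_b(M;\R) \to H^n(M;\R)$. So it suffices to produce a class in $H^n_b(\Gamma;\R)$ whose image under $c^n$ is $[M]^\vee$.

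First I would lift $[M]^\vee$ to the relative group cohomology $H^n(\Gamma,\mathcal{P};\R)$. In the long exact sequence of the pair,
\[ \cdots \longrightarrow H^n(\Gamma,\mathcal{P};\R) \longrightarrow H^n(\Gamma;\R) \longrightarrow \bigoplus_{i=1}^{k} H^n(P_i;\R) \longrightarrow \cdots, \]
the right-hand term vanishes: $\Gamma$ is an orientable Poincar\'e duality group of dimension $n$ (being the fundamental group of a closed oriented aspherical $n$-manifold), each $P_i$ is proper, hence of infinite index, and by Strebel's theorem an infinite-index subgroup of a $PD_n$ group has cohomological dimension at most $n-1$, so $H^n(P_i;\R)=0$ for all $i$. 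Therefore $H^n(\Gamma,\mathcal{P};\R)\to H^n(\Gamma;\R)$ is surjective, and we may choose $\omega \in H^n(\Gamma,\mathcal{P};\R)$ mapping to $[M]^\vee$.

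Next I would upgrade $\omega$ to a bounded class; this is the only place relative hyperbolicity is used. By the theorem of Mineyev--Yaman, for a relatively hyperbolic pair $(\Gamma,\mathcal{P})$ the comparison map $H^m_b(\Gamma,\mathcal{P};\R)\to H^m(\Gamma,\mathcal{P};\R)$ is surjective in all degrees $m \ge 2$. Since $n \ge 2$, we get $\omega_b \in H^n_b(\Gamma,\mathcal{P};\R)$ mapping to $\omega$. Finally I push $\omega_b$ forward along the natural map $H^n_b(\Gamma,\mathcal{P};\R)\to H^n_b(\Gamma;\R)$; because the comparison maps assemble into a morphism from the bounded long exact sequence of the pair to the ordinary one, the image of this pushforward in $H^n(\Gamma;\R)$ equals the image of $\omega$, namely $[M]^\vee$. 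Hence $[M]^\vee \in \mathrm{im}(c^n)$, and the duality principle gives $\SV{M}>0$.

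The bookkeeping with the two long exact sequences, the identifications coming from asphericity, and the invocation of Strebel's theorem are all routine. The one genuinely deep ingredient, and the step I expect to be the crux, is the Mineyev--Yaman surjectivity of the comparison map in the relative bounded cohomology of a relatively hyperbolic pair, which rests on a relative linear (homological) isoperimetric inequality for such pairs; one should also verify the boundary case $k=0$, where $\Gamma$ is genuinely hyperbolic and Mineyev's original theorem applies, and check that the hypothesis that the $P_i$ are proper subgroups is what ensures infinite index, so that Strebel's theorem is available.
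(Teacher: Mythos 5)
The paper does not prove \Cref{prop:relhyp_sv} at all: it is imported verbatim from the cited reference \cite{BH2013}, so there is no internal argument to measure your proposal against. On its own terms, your sketch is essentially the standard derivation of this result and I see no fatal flaw: the duality principle reduces positivity of $\SV{M}$ to boundedness of the cohomological fundamental class, Gromov's mapping theorem and asphericity transport everything to $H^\ast_b(\Gamma)$ and $H^\ast(\Gamma)$, the Bieri--Eckmann long exact sequence of the pair $(\Gamma,\mathcal{P})$ together with Strebel's theorem lets you lift the class to $H^n(\Gamma,\mathcal{P};\R)$, and Mineyev--Yaman surjectivity of the relative comparison map in degrees $\ge 2$ is exactly the deep input that produces a bounded lift, which you then push forward compatibly with the comparison maps. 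Two points deserve more than a passing remark. First, ``proper, hence of infinite index'' is false for arbitrary subgroups; what saves you is a property of relatively hyperbolic structures: peripheral subgroups are almost malnormal, so a proper peripheral of finite index in the infinite group $\Gamma$ would satisfy $|P\cap P^g|=\infty$ for all $g$ and hence equal $\Gamma$, a contradiction --- you flag this issue at the end but should make this argument explicit before invoking Strebel. Second, the reduction to the orientable case needs that relative hyperbolicity (with respect to proper subgroups) passes to finite-index subgroups, which is fine and is in fact the same quasi-isometry-invariance fact the paper itself uses in \Cref{prop:relhyp_coxeter}; one should also record that $\SV{M}>0$ is equivalent to positivity for the orientation double cover by the definition of simplicial volume for non-orientable manifolds. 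With these details supplied, your outline is a correct proof, resting (as expected) on the Mineyev--Yaman theorem rather than on anything contained in this paper.
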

	We do not recall here the definition of relatively hyperbolic groups.
	For an account on this notion, we refer the reader to \cite{Osin2006}.
	For our purposes, it is enough to know that there exists an effective algorithm to check, given a simplicial complex $T$, whether the fundamental group of the Davis' manifold $\Dav{T}$ is relatively hyperbolic (see \Cref{prop:relhyp_coxeter}).
	
	\subsection{Simplicial complexes}
	Let $T$ be a finite simplicial complex with vertex set $V(T)$.
	Formally, $T$ is a family of subsets of $V(T)$,
	but we will often confuse $T$ with its geometric realization, which is a topological space.
	We always assume that $\emptyset \in T$ and $\{v\}\in T$ for every $v \in V(T)$.
	
	We recall that the link of a simplex $\sigma \in T$ is the subcomplex
	\[\Link{\sigma}{T} = \{\tau \in T \colon \tau \cap \sigma= \emptyset\text{ and }\tau \cup \sigma \in T\}.\]
	The notion of link is defined also for faces of a \emph{cubical complex}.
	If $F$ is a face of a cubical complex $X$, then $\Link{F}{X}$ is a simplicial complex, whose simplices are in 1-to-1 correspondence with the faces of $X$ containing $F$, with the same containment relation.
	
	Given two simplicial complexes $T_1$ and $T_2$, with disjoint vertex sets, their join is the simplicial complex defined as
	\[T_1 \star T_2 = \{\sigma_1 \cup \sigma_2 \colon \sigma_1 \in T_1\text{ and }\sigma_2 \in T_2\}.\]
	If $T_1$ and $T_2$ are triangulations of $S^{n-1}$ and $S^{m-1}$, respectively, then $T_1 \star T_2$ is a triangulation of $S^{n+m-1}$.
	
	A \emph{full} subcomplex $S \subseteq T$ is a subcomplex with the property that a simplex $\sigma \in T$ belongs to $S$ if and only if their vertices do (so, $S$ is determined univocally by its vertex set $V(S)$).
	
	\begin{definition}\label{def:flag}
		A simplicial complex
		$T$ is \emph{flag} if, for every set of vertices $\{v_0, \dots, v_k\} \subseteq V(T)$ such that $\{v_i,v_j\} \in T$ for every $(i,j) \in \{0,\dots,k\}^2$, \ie, they are pairwise adjacent, it holds $\{v_0, \dots, v_k\} \in T$, \ie, they are the vertices of a simplex of $T$.
		In other words, $T$ is the maximal simplicial complex with a given $1$-skeleton.
	\end{definition}
	
	As an example, a triangulation of $S^1$ is flag if and only if it has at least $4$ vertices (with $3$ vertices, there would be a missing $2$-simplex).
	
	The property of being flag is preserved by taking links, joins, and full subcomplexes.
	If $T$ is flag, then every link in $T$ is a full subcomplex.
	
	\begin{definition}[Edge subdivision]
		Let $T$ be a simplicial complex and let $\{x,y\} \in T$ be a $1$-simplex of $T$.
		We obtain a new simplicial complex $T'$ as follows: a new vertex $m_{x,y}$ is introduced, and $T'$ is defined as
		\begin{align*}
			T' = \{\sigma \colon \{x,y\}\not\subseteq \sigma \in T\}
			     &\cup \{\sigma\setminus\{y\}\cup\{m_{x,y}\} \colon \{x,y\}\subseteq \sigma \in T\}\\
			     &\cup \{\sigma\setminus\{x\}\cup\{m_{x,y}\} \colon \{x,y\}\subseteq \sigma \in T\}.
		\end{align*}
		This formalizes the insertion of a midpoint on the $1$-simplex $\{x,y\}$, that requires the simplices containing $\{x,y\}$ to be subdivided accordingly.
		We say that $T'$ is obtained from $T$ by subdividing $\{x,y\}$.
	\end{definition}
	The geometric realizations of $T$ and $T'$ are homeomorphic.
	If $T$ is flag, then also $T'$ is flag.
	At the level of the $1$-skeleton, the new vertex $m_{x,y}$ is adjacent to $x$, $y$, and to the common neighbours of $x$ and $y$.
	
	\begin{definition}[Edge collapse]
		Let $T$ be a simplicial complex and let $\{x,y\} \in T$ be a $1$-simplex of $T$.
		We obtain a new simplicial complex $T'$ with $V(T') = V(T) \setminus \{y\}$ and
		\begin{align*}
			T' = \{\sigma \colon y \not\in \sigma \in T\}
			     \cup \{\sigma \setminus \{y\} \cup \{x\} \colon y \in \sigma \in T\}.
		\end{align*}
		We say that $T'$ is obtained from $T$ by collapsing $\{x,y\}$.
	\end{definition}
	It follows from the definitions that the subdivision of an edge $\{x,y\}$ followed by the collapse of $\{x,m_{xy}\}$ (or $\{y,m_{xy}\}$) gives back the initial simplicial complex.
	
	In general, an edge collapse can change the homeomorphism type of a simplicial complex.
	However, we recall the following theorem of Nevo:
	\begin{theorem}[{\cite{Nevo2007}}]
		Given an edge $\{x,y\}$ in a triangulation $T$ of a compact PL (piecewise-linear) manifold\footnote{In the PL setting, a triangulation is a simplicial complex whose geometric realization is PL homeomorphic (not just homeomorphic) to the given PL manifold.} without boundary, its collapse results in a PL-homeomorphic space if and only if it satisfies the following \emph{link condition}:
		\[ \Link{\{x\}}{T} \cap \Link{\{y\}}{T} = \Link{\{x,y\}}{T}. \]
	\end{theorem}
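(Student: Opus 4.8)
My plan is to prove the two implications separately, after recording the trivial inclusion and one local computation. The inclusion $\Link{\{x,y\}}{T}\subseteq\Link{\{x\}}{T}\cap\Link{\{y\}}{T}$ always holds (if $\rho\cup\{x,y\}\in T$ then $\rho\cup\{x\}\in T$ and $\rho\cup\{y\}\in T$), so the content is the reverse inclusion. Write $T'$ for the collapse of $\{x,y\}$, with $x$ the surviving vertex, and set $L=\Link{\{x,y\}}{T}$. Let $A_x$ be the \emph{antistar} of $y$ in $\Link{\{x\}}{T}$, that is, the subcomplex of the simplices of $\Link{\{x\}}{T}$ not containing $y$, and define $A_y$ symmetrically. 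Since the closed star of $y$ in the PL $(n-1)$-sphere $\Link{\{x\}}{T}$ is a regular neighbourhood of $y$ with link $\Link{\{y\}}{\Link{\{x\}}{T}}=\Link{\{x,y\}}{T}=L$, the complex $A_x$ is a PL $(n-1)$-ball with $\partial A_x=L$, and likewise $\partial A_y=L$. Unwinding the definition of the edge collapse gives two facts to be used repeatedly: $A_x\cap A_y=\Link{\{x\}}{T}\cap\Link{\{y\}}{T}$, and, for every simplex $\rho$ disjoint from $\{x,y\}$, $\Link{\rho\cup\{x\}}{T'}=\Link{\rho\cup\{x\}}{T}\cup\Link{\rho\cup\{y\}}{T}$; the case $\rho=\emptyset$ reads $\Link{\{x\}}{T'}=A_x\cup A_y$.

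For sufficiency, assume the link condition, that is, $A_x\cap A_y=L$. Then $\Link{\{x\}}{T'}=A_x\cup A_y$ is the union of two PL $(n-1)$-balls along their entire common boundary $L\cong S^{n-2}$, hence a PL $(n-1)$-sphere, so its cone $N':=\{x\}\star\Link{\{x\}}{T'}$, the closed star of $x$ in $T'$, is a PL $n$-ball with boundary $A_x\cup A_y$. Let $U$ be the full subcomplex of $T$ spanned by $V(T)\setminus\{x,y\}$; a simplex avoiding $x$ and $y$ lies in $T'$ if and only if it lies in $T$, so $U$ is also the full subcomplex of $T'$ spanned by $V(T')\setminus\{x\}$, and one checks $|T|=|N|\cup|U|$ and $|T'|=|N'|\cup|U|$ with $N\cap U=N'\cap U=A_x\cup A_y$, where $N$ is the union of the closed stars of $x$ and $y$ in $T$. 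Now $N$ is the union of all closed simplices of $T$ meeting the segment $\{x,y\}$, and since $\{x\},\{y\},\{x,y\}$ are the only simplices of $T$ with vertices among $\{x,y\}$, that segment is a full subcomplex, so $N$ is a regular neighbourhood of an arc in the closed PL manifold $|T|$, hence a PL $n$-ball; the link condition is exactly what forces its boundary sphere to be $A_x\cup A_y$ (otherwise an $(n-1)$-simplex of $A_x\cap A_y$ outside $L$ would have both its cofaces inside $N$). Finally, any two PL $n$-balls with a common boundary subcomplex are PL-homeomorphic rel that boundary, so a PL-homeomorphism $N\to N'$ fixing $A_x\cup A_y$, glued with the identity of $|U|$, yields a PL-homeomorphism $|T|\to|T'|$.

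For necessity, suppose the link condition fails; call a simplex $\rho$ disjoint from $\{x,y\}$ a \emph{violator} if $\rho\cup\{x\}\in T$ and $\rho\cup\{y\}\in T$ but $\rho\cup\{x,y\}\notin T$. A violator exists by assumption, and is nonempty because $\{x,y\}\in T$; fix one, $\rho$, of maximal dimension $d$. Maximality guarantees that no nonempty simplex $\mu$ disjoint from $\rho\cup\{x,y\}$ has both $\mu\cup\rho\cup\{x\}$ and $\mu\cup\rho\cup\{y\}$ in $T$, since $\rho\cup\mu$ would then be a larger violator. The goal is to show that $T'$ does not triangulate a closed manifold, contradicting $|T'|\cong|T|$. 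If $d=n-1$, then $\rho\cup\{x\}$ and $\rho\cup\{y\}$ are the two $n$-simplices of $T$ containing the facet $\rho$, and the collapse identifies them, so $\rho$ is a face of exactly one $n$-simplex of $T'$ — impossible in a closed PL $n$-manifold. If $d\le n-2$, then $\Link{\rho\cup\{x\}}{T'}=\Link{\rho\cup\{x\}}{T}\cup\Link{\rho\cup\{y\}}{T}$, and maximality of $\rho$ forces these two subcomplexes to meet only in the empty simplex; hence $|\Link{\rho\cup\{x\}}{T'}|$ is a disjoint union of two copies of the PL sphere $S^{n-d-2}$, which is not a PL sphere, so again $T'$ is not a closed PL $n$-manifold.

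The subtle half is necessity. It will not do to argue via the homology of the vertex links of $T'$: when the link condition fails the ``extra'' simplices of $A_x\cap A_y$ can be homologically invisible, so $\Link{\{x\}}{T'}$ may still be a homology sphere. The device that makes the argument work is passing to a violator of \emph{maximal} dimension, which forces the offending link in $T'$ either to split literally into two spheres or to leave an $(n-1)$-face with a single coface. On the sufficiency side the only genuine input is the standard PL fact that a PL ball is the cone on its boundary rel boundary; the link condition is invoked precisely to ensure that $\Link{\{x\}}{T'}$ is a true $(n-1)$-sphere and not two balls clamped together along something larger than $L$.
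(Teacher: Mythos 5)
There is nothing to compare against in the paper itself: the statement is quoted from \cite{Nevo2007} and no proof is given, so your argument stands or falls on its own. The necessity half is fine: passing to a violator $\rho$ of maximal dimension, noting that then $\Link{\rho\cup\{x\}}{T}$ contains no simplex through $y$ (and symmetrically), and reading off either a facet of $T'$ with a single coface ($d=n-1$) or a link in $T'$ that is a disjoint union of two PL spheres ($d\le n-2$) is a correct and complete argument, granted the standard fact that a complex PL-homeomorphic to a closed PL $n$-manifold has PL-sphere links. (Minor point: your displayed identity $\Link{\rho\cup\{x\}}{T'}=\Link{\rho\cup\{x\}}{T}\cup\Link{\rho\cup\{y\}}{T}$ is literally true only when neither link on the right meets $\{x,y\}$ — which holds for a violator, and for $\rho=\emptyset$ you correctly use the antistar version instead — so state it in that restricted form.)

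The sufficiency half, however, has a genuine gap at its central step. You assert that, because the edge $\{x,y\}$ is a full subcomplex, the simplicial neighbourhood $N$ (the union of the closed stars of $x$ and $y$) is a regular neighbourhood of an arc, hence a PL $n$-ball, and you invoke the link condition only to identify $\partial N$. Fullness does \emph{not} imply that the unsubdivided simplicial neighbourhood is regular: in the boundary of the $3$-simplex on vertices $\{x,y,a,b\}$ the edge $\{x,y\}$ is full, yet the union of the two closed stars is the whole $2$-sphere, not a ball (of course the link condition fails there, with violator $\{a,b\}$ — which is precisely the point: ballness of $N$ is where the link condition must enter, and your proof never uses it for that; the true statement in this vein concerns the \emph{derived} neighbourhood of a full subcomplex, taken after subdividing). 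Your parenthetical about an $(n-1)$-simplex of $A_x\cap A_y$ outside $L$ having both cofaces in $N$ only rules out unwanted boundary faces; it does not make $N$ a ball. A correct route is to decompose $N$ as the union of the three PL $n$-balls $\{x\}\star A_x$, the closed star of the edge (the join of the edge with $L$), and $\{y\}\star A_y$: consecutive pieces meet in the $(n-1)$-balls $\{x\}\star L$ and $\{y\}\star L$ sitting in their boundaries, and the link condition gives $(\{x\}\star A_x)\cap(\{y\}\star A_y)=A_x\cap A_y=L$, so two applications of the lemma that two PL $n$-balls meeting in a common boundary $(n-1)$-ball form a PL $n$-ball show that $N$ is a PL $n$-ball with $\partial N=A_x\cup A_y$; after that, your gluing of $N\to N'$ rel $A_x\cup A_y$ with the identity on $|U|$ goes through as written.
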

	Note that the link condition is always satisfied if $T$ is flag.
	In particular, if $T$ is a triangulation of $S^{n-1}$ equipped with the standard PL structure,\footnote{If $n \le 4$, every triangulation of $S^{n-1}$ is PL homeomorphic to the standard PL $S^{n-1}$.} then $T'$ is still a triangulation of $S^{n-1}$.

	\begin{definition}
		Let $T$ be a simplicial complex.
		A \emph{square} in $T$ is a full subcomplex homeomorphic to $S^1$ having $4$ vertices.
	\end{definition}
	If $T$ is flag and an edge $\{x,y\}\in T$ is \emph{not} contained in a square, then the result of the collapse of $\{x,y\}$ is still flag.
	This is not hard to prove, and a detailed proof is provided in \cite{LN2016}.
	In particular, if an edge of a flag triangulation of $S^2$ or $S^3$ is not contained in a square, then the result of its collapse is still a flag triangulation of $S^2$ or $S^3$.

	\section{Davis' construction}\label{sec:davis_construction}
	Starting from a simplicial complex $T$, one can build a cubical complex $\Dav{T}$ with the following procedure introduced by Davis (more details can be found in \cite{Dav2002}).
	The cubical complex $\Dav{T}$ will be a subset of an ambient cube $[-1,1]^{V(T)}$, whose dimension is equal to the number of vertices of $T$.
	Given a face of the ambient cube, consider the subset of $V(T)$ corresponding to the coordinates that are free to vary in the face (while each of the other coordinates is fixed at $-1$ or $+1$); we call this subset the \emph{type} of the face.
	Thus, if $\sigma$ is a subset of $V(T)$, the faces of type $\sigma$ are those of the form
	\begin{align*} \prod_{v \in V(T)} I_v\ \subseteq\ [-1,1]^{V(T)},&& \text{where } I_v = 
	\begin{cases}[-1,1] & \text{if } v \in \sigma\\
	             \{-1\} \text{ or } \{+1\} &\text{if } v\not\in\sigma.\end{cases}\end{align*}
	In particular, there are $2^{\abs{V(T)}-\abs{\sigma}}$ faces of type $\sigma$.
	
	\begin{definition}
		The cubical complex $\Dav{T}$ is the subcomplex of $[-1,1]^{V(T)}$ given by the faces whose types correspond to simplices in $T$.
	\end{definition}
	In particular, the vertices of the ambient cube are always contained in $\Dav{T}$, as their type is the empty simplex $\emptyset$, which always belongs to $T$.
	The cubical complex $\Dav{T}$ is always connected, because the $1$-dimensional faces of the ambient cube correspond to the vertices of $T$, so they are included in $\Dav{T}$.
	
	It readily follows from the construction that if a cube in $\Dav{T}$ has type $\sigma$ then its link in $\Dav{T}$ is isomorphic, as a simplicial complex, to the link of $\sigma$ in $T$.
	In particular, the link of every vertex of $\Dav{T}$ is isomorphic to $T$.
	This easy but fundamental observation has the following consequences:
	\begin{enumerate}[label=(\arabic*)]
		\item If $T$ is homeomorphic to the $(n-1)$-dimensional sphere, then $\Dav{T}$ is a manifold of dimension $n$;
		\item\label{it:cat0} $\Dav{T}$ is aspherical if and only if $T$ is a flag simplicial complex \cite[Corollary 5.4]{Dav2002}.
		In fact, in this case $\Dav{T}$ acquires a locally CAT(0) metric, by Gromov's criterion \cite[\S 4.2.C]{Gromov87}.
	\end{enumerate}
	
	\paragraph{Examples}
	\begin{itemize}
		\item
		If $T$ consists of two points (the $0$-dimensional sphere $S^0$), then $\Dav{T}$ is homeomorphic to $S^1$.
		\item
		If $T$ is an empty triangle (\ie, the boundary of a $2$-simplex, which is not flag), then $\Dav{T}$ is homeomorphic to $S^2$.
		In general, if $T$ is the boundary of a $k$-simplex, then $M(T)$ is homeomorphic to $S^k$.
		\item If $T$ the boundary of a square, then $\Dav{T}$ is homeomorphic to a torus.
		This can be seen either directly by drawing a picture, or as a special case of the next example, or by using the following fact, which is easy to prove: 
		\begin{remark}\label{rmk:join_product}
			$\Dav{T_1\star T_2} = \Dav{T_1} \times \Dav{T_2}$.
		\end{remark}
		\item If $T$ is the boundary of a polygon with $n$ sides, then $\Dav{T}$ is an orientable closed surface of genus $(n-4)\cdot 2^{n-3}+1$.
		This can be proved with an induction argument: adding a vertex in $T$ corresponds to removing a number of disjoint discs from $\Dav{T}$ and taking the double of the resulting surface with boundary.
		Alternatively, one can use \Cref{lemma:chi_formula} below to compute the Euler characteristic of the resulting surface.
	\end{itemize}
	
	\begin{lemma}\label{lemma:chi_formula}
		Let $T$ be a finite simplicial complex of dimension $d$, and for every $i \in \{-1,0,\dots,d\}$ let $f_i$ be the number of $i$-simplices in $T$ (where $f_{-1} = 1$, counting the empty simplex).
		Then the Euler characteristic of $\Dav{T}$ is given by
		\begin{align*}
			\chi(\Dav{T})\ &=\ 2^{f_0-1} \cdot \sum_{i = -1}^d (-1)^{i+1}2^{-i} f_i\\
			&=\ (-1)^{d+1} \cdot 2^{f_0-d-1} \cdot (f_d - 2f_{d-1} + 4f_{d-2} - \dots).
		\end{align*}
	\end{lemma}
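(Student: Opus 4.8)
The plan is to compute the Euler characteristic of the cubical complex $\Dav{T}$ directly from its cell structure, counting cubes by their type. Recall that for each simplex $\sigma \in T$ of dimension $i$ (so $\abs{\sigma} = i+1$), the faces of the ambient cube $[-1,1]^{V(T)}$ of type $\sigma$ are exactly the $(i+1)$-dimensional cubes of $\Dav{T}$ with that type, and there are $2^{\abs{V(T)} - \abs{\sigma}} = 2^{f_0 - (i+1)}$ of them. Since $\Dav{T}$ is a cubical complex, its Euler characteristic is the alternating sum $\sum_{j} (-1)^j c_j$, where $c_j$ is the number of $j$-dimensional cubes. A $j$-cube of $\Dav{T}$ has type a simplex of dimension $j-1$ (with the empty simplex, of dimension $-1$, corresponding to the $0$-cubes, i.e.\ the vertices of the ambient cube). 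Hence $c_j = 2^{f_0 - j} f_{j-1}$, and reindexing with $i = j-1$ gives
\begin{align*}
	\chi(\Dav{T}) = \sum_{i=-1}^{d} (-1)^{i+1} 2^{f_0 - i - 1} f_i = 2^{f_0 - 1} \sum_{i=-1}^{d} (-1)^{i+1} 2^{-i} f_i,
\end{align*}
which is the first displayed formula.

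For the second displayed formula, I would simply factor $2^{f_0 - d - 1}$ out of the sum, so that the remaining factor is $\sum_{i=-1}^{d} (-1)^{i+1} 2^{d-i} f_i = (-1)^{d+1}(f_d - 2 f_{d-1} + 4 f_{d-2} - \cdots)$; adjusting signs, this matches $(-1)^{d+1} \cdot 2^{f_0 - d - 1} \cdot (f_d - 2f_{d-1} + 4f_{d-2} - \dots)$, as claimed. One small point worth checking is the sign bookkeeping: the top-dimensional term $i = d$ contributes $(-1)^{d+1} f_d$ inside the parenthesized expression, and each step down in $i$ flips the sign and multiplies the power of $2$ by $2$, matching the alternating pattern $f_d - 2f_{d-1} + \cdots$ after pulling out the overall $(-1)^{d+1}$.

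The main obstacle, such as it is, is justifying that the Euler characteristic of $\Dav{T}$ — defined a priori via singular (co)homology — can be computed as the alternating sum of the numbers of cubes in its natural cubical structure. This is the standard fact that for a finite CW (or cubical) complex the Euler characteristic equals the alternating sum of the numbers of cells in each dimension, which follows from the fact that cellular homology computes singular homology and from the rank–nullity argument applied to the cellular chain complex. Everything else is a matter of correctly counting cubes by type, which the Davis construction makes transparent: the number of faces of the ambient cube of type $\sigma$ is exactly $2^{\abs{V(T)} - \abs{\sigma}}$, and these are included in $\Dav{T}$ precisely when $\sigma \in T$. The only care needed is the off-by-one indexing between the dimension of a simplex $\sigma$ and the dimension of the corresponding cube (a $j$-cube has type a $(j-1)$-simplex), and remembering that the empty simplex contributes the $0$-cubes.
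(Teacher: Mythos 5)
Your proposal is correct and follows exactly the paper's own argument: count the cubes of $\Dav{T}$ by type (an $i$-simplex yields $2^{f_0-i-1}$ cubes of dimension $i+1$) and take the alternating sum over cells, with the second formula obtained by factoring out $2^{f_0-d-1}$. The extra care you take with the indexing and with justifying that the cellular alternating sum computes the Euler characteristic is fine but adds nothing beyond the paper's proof.
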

	\begin{proof}
		For each $i$-simplex in $T$ there are $2^{f_0-i-1}$ cells in $\Dav{T}$ whose type is equal to the given simplex.
		This contributes with a $(-1)^{i+1}2^{f_0-i-1}$ summand in the computation of the Euler characteristic as the alternating sum of the number of cells in each dimension.
		The result follows.
	\end{proof}
	
	\subsection{The fundamental group of \texorpdfstring{$\bm{\Dav{T}}$}{M(T)}}
	Given a simplicial complex $T$, consider its $1$-skeleton as a Coxeter diagram, and define $\Gamma_T$ as the right-angled Coxeter group associated to it.
	It is not hard to see that the fundamental group of $\Dav{T}$ is isomorphic to a finite-index subgroup of $\Gamma_T$ (see, \eg, \cite{Dav08}).
	More precisely, it is isomorphic to the kernel $G_T < \Gamma_T$ of the homomorphism 
	\[ \Gamma_T \to (\Z/2\Z)^{V(T)} \]
	that sends the generator associated to a vertex $v \in V(T)$ to the corresponding basis element of $(\Z/2\Z)^{V(T)}$.
	The isomorphism $G_T \cong \pi_1(\Dav{T})$ is induced by the natural correspondence between words in the generators of $\Gamma_T$ and paths in the $1$-skeleton of $\Dav{T}$ starting from the vertex whose coordinates are all equal to $+1$.
	
	\begin{proposition}\label{prop:relhyp_coxeter}
		Let $T$ be a finite simplicial complex.
		If $\Gamma_T$ is relatively hyperbolic with respect to a finite collection of proper subgroups, then the same is true for $G_T \cong \pi_1(\Dav{T})$, and vice-versa.
	\end{proposition}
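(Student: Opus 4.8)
The proof rests on the fact recalled just above the statement: $G_T$ is precisely the kernel of the homomorphism $\Gamma_T \to (\Z/2\Z)^{V(T)}$, hence a \emph{normal} subgroup of $\Gamma_T$ of finite index $2^{\abs{V(T)}}$. Thus $\Gamma_T$ and $G_T$ are commensurable, and the whole content of the proposition is the assertion that being relatively hyperbolic with respect to a finite collection of \emph{proper} subgroups is a commensurability invariant. This is known; what the proof must actually do is import the relevant statements from the theory of relatively hyperbolic groups (for which we refer again to \cite{Osin2006}) and, in each direction, check that the peripheral subgroups produced are still proper and finite in number.

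For the implication ``$\Gamma_T$ relatively hyperbolic $\Rightarrow$ $G_T$ relatively hyperbolic'', I would proceed as follows. If the given peripheral collection consists only of finite subgroups (or is empty), then $\Gamma_T$ is word-hyperbolic, hence so is its finite-index subgroup $G_T$, and we are done. Otherwise, discarding the finite members, we may assume $\Gamma_T$ is hyperbolic relative to a finite collection of infinite proper subgroups $P_1,\dots,P_k$; by almost malnormality of the peripheral family (see \cite{Osin2006}), each infinite $P_i$ has \emph{infinite index} in $\Gamma_T$ (if $P_i$ had finite index, then for $g\notin P_i$ the intersection $P_i\cap P_i^{g}$ would be of finite index, hence infinite, contradicting almost malnormality). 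Now apply the standard theorem on finite-index subgroups of relatively hyperbolic groups: $G_T$ is hyperbolic relative to the subgroups $G_T\cap P_i^{g}=(G_T\cap P_i)^{g}$, with $g$ ranging over a set of representatives for the finitely many cosets in $\Gamma_T/(G_T P_i)$. This collection is finite, and each $G_T\cap P_i$ has infinite index in $G_T$ because $[\Gamma_T:G_T\cap P_i]\ge[\Gamma_T:P_i]=\infty$; in particular every peripheral subgroup in this new structure is proper. Hence $G_T\cong\pi_1(\Dav{T})$ is relatively hyperbolic with respect to a finite collection of proper subgroups.

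The converse, ``$G_T$ relatively hyperbolic $\Rightarrow$ $\Gamma_T$ relatively hyperbolic'', is symmetric and I would argue it the same way. If $G_T$ is word-hyperbolic then $\Gamma_T$, having a hyperbolic subgroup of finite index, is word-hyperbolic; otherwise $G_T$ is hyperbolic relative to a finite collection of infinite proper subgroups $Q_1,\dots,Q_m$, again of infinite index. One then invokes the companion result for finite-index overgroups of relatively hyperbolic groups (\cite{Osin2006}): $\Gamma_T$ is hyperbolic relative to a finite collection of subgroups, each containing a conjugate of some $Q_j$ with finite index, and these overgroups are still proper in $\Gamma_T$ by the same index/almost-malnormality obstruction as above. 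The only delicate point --- the step that requires care rather than routine work --- is the bookkeeping of the peripheral structures through the two passages and the verification that properness (equivalently, infinite index of the infinite peripheral subgroups) is preserved; the genuinely geometric input, namely the preservation of fineness of the coned-off graphs and of the bounded coset penetration property, is entirely supplied by the cited results, and, if one prefers, can be obtained in one stroke from the quasi-isometry invariance of relative hyperbolicity applied to the quasi-isometry between $\Gamma_T$ and $G_T$.
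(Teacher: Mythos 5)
Your argument is correct, but it takes a more hands-on route than the paper. The paper's proof is essentially one line: relative hyperbolicity with respect to a finite collection of proper subgroups is a quasi-isometry invariant \cite{Drutu2009}, and $G_T$, being of finite index in $\Gamma_T$, is quasi-isometric to it; so the property passes in both directions at once, with properness of the resulting peripherals built into the cited invariance theorem. You instead track the induced peripheral structures explicitly: down to the finite-index (normal) subgroup via the standard theorem giving peripherals $G_T\cap P_i^{\,g}$ over coset representatives, and back up via the companion statement for finite-index overgroups, using almost malnormality to rule out finite-index infinite peripherals and hence to preserve properness. What your approach buys is concrete control of the peripheral subgroups of $G_T$ in terms of those of $\Gamma_T$ (which is in the spirit of how the thick subgroups are identified later for $T_9$); what it costs is the extra bookkeeping you acknowledge, plus a slightly shaky citation: the going-down result is standard, but the precise finite-index overgroup statement is not clearly in \cite{Osin2006}, and the cleanest justification for it is exactly the quasi-isometry invariance of \cite{Drutu2009} that you mention in your last sentence --- which is the paper's entire proof. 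A further negligible point: in your degenerate case one should agree on the convention that a word-hyperbolic group counts as relatively hyperbolic with respect to the empty (or trivial) peripheral collection, which is how the paper implicitly reads the hypothesis of its Proposition on positivity of simplicial volume.
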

	\begin{proof}
		The property of being relatively hyperbolic with respect to a finite collection of proper subgroups is invariant under quasi-isometries \cite{Drutu2009}; in particular, it is inherited by (and from) finite-index subgroups.
		%
	\end{proof}
	
	In \cite{BHS17}, Behrstock, Hagen and Sisto exhibit a polynomial-time algorithm that decides whether a right-angled Coxeter group is relatively hyperbolic.
	By \Cref{prop:relhyp_coxeter}, we can use this algorithm to decide whether $\pi_1(\Dav{T})$ is relatively hyperbolic (recall that this implies $\SV{\Dav{T}} > 0$ by \Cref{prop:relhyp_sv}).
	This algorithm will be used, in \Cref{sec:3dim}, in order to establish that $\SV{\Dav{T}} > 0$ for a specific triangulation $T$ of $S^2$.
	
	\subsection{Subcomplexes}
	Let $S$ be a subcomplex of $T$.
	Associated to it, we have the cubical complex $M(S)$, the right-angled Coxeter group $\Gamma_S$ and its finite-index subgroup $G_S\cong\pi_1(\Dav{S})$.
	
	The cubical complex $\Dav{S}$ sits naturally inside $\Dav{T}$, by fixing at $+1$ the coordinates corresponding to the vertices in $V(T)\setminus V(S)$.
	By fixing arbitrarily the coordinates associated to vertices not in $S$, one obtains $2^{\abs{V(T)}-\abs{V(S)}}$ disjoint copies of $\Dav{S}$ in $\Dav{T}$.
	
	Suppose now that, whenever two vertices of $S$ are adjacent in $T$, they are also adjacent in $S$ (this is the case if, for instance, $S$ is a full subcomplex).
	Then, the Coxeter group $\Gamma_S$ is a subgroup of $\Gamma_T$, and $\Gamma_S \cap G_T = G_S \cong \pi_1(\Dav{S})$.
	In particular, the inclusion of $\Dav{S}$ in $\Dav{T}$ is $\pi_1$-injective.

	\section{Maps between Davis' cubical complexes}
	Let $T_1$ and $T_2$ be finite simplicial complexes, and let $f:T_1 \to T_2$ be a simplicial map.
	We think of $f$ as a map from $V(T_1)$ to $V(T_2)$, with the property that if some vertices are contained in a simplex of $T_1$ then their images are contained in a simplex of $T_2$, but also as a (continuous) map between the geometric realizations.
	\begin{remark}
		If $T_2$ is flag, and $f:V(T_1)\to V(T_2)$ sends each pair of adjacent vertices to adjacent vertices (or to a single vertex), then $f$ is simplicial.
	\end{remark}
	The upshot of this section is the following proposition and its immediate consequence about simplicial volumes, \Cref{cor:possv}.
	The reader can skip the technicalities of the constructions and proofs in this section, and just keep in mind these results (and \Cref{def:dominance}, which introduces the main notion investigated in subsequent sections) for later.
	
	\begin{proposition}\label{prop:nonzero_induced}
		Let $f:T_1 \to T_2$ be a simplicial map between simplicial complexes homeomorphic to $S^n$, and suppose that $f$ has nonzero degree.
		Then, there is a continuous map from $\Dav{T_1}$ to $\Dav{T_2}$ of nonzero degree.
	\end{proposition}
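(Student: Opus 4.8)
The plan is to pull $f$ back explicitly to the cube model of \Cref{sec:davis_construction}. Since $f$ has nonzero degree it is surjective as a map of spaces, hence surjective on vertices and on all simplices, so the fibres $f^{-1}(w)$, $w\in V(T_2)$, partition $V(T_1)$. I would then define the ``coordinatewise‑product'' map $F\colon[-1,1]^{V(T_1)}\to[-1,1]^{V(T_2)}$ by $F(x)_w=\prod_{v\in f^{-1}(w)}x_v$. If $x$ lies in a face of type $\sigma\in T_1$ then $F(x)_w\neq\pm1$ only when $f^{-1}(w)\cap\sigma\neq\emptyset$, i.e.\ only when $w\in f(\sigma)$, so $F(x)$ lies in a face of type contained in $f(\sigma)\in T_2$; hence $F$ restricts to a continuous map $F\colon\Dav{T_1}\to\Dav{T_2}$. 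Let $\bar f\colon(\Z/2\Z)^{V(T_1)}\to(\Z/2\Z)^{V(T_2)}$ be the (surjective) homomorphism sending the $v$‑th sign flip to the $f(v)$‑th one; a one‑line check gives $F(\bar\epsilon\cdot x)=\bar f(\bar\epsilon)\cdot F(x)$, so $F$ is $\bar f$‑equivariant, it carries the fundamental chamber $K_1:=\Dav{T_1}\cap[0,1]^{V(T_1)}$ into $K_2:=\Dav{T_2}\cap[0,1]^{V(T_2)}$, and it carries the mirror $K_{1,v}:=K_1\cap\{x_v=0\}$ into $K_{2,f(v)}$. (Equivalently one could take for $F$ the $\bar f$‑equivariant extension of the simplicial map $K_1\to K_2$ induced by the poset map $\sigma\mapsto f(\sigma)$ on the simplices of $T_1$; the degree computation is the same.)

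The remaining task is to show $\deg(F)\neq 0$, and I would reduce this to two ingredients. Recall that $K_i$ is a disk $D^{n+1}$ (a cone on the barycentric subdivision of $T_i$) whose boundary sphere is canonically identified with $|T_i|\cong S^n$, and that $\Dav{T_i}$ is tiled by its $2^{\abs{V(T_i)}}$ chambers $\bar\epsilon\cdot K_i$. By equivariance $F$ sends the chamber $\bar\epsilon\cdot K_1$ onto $\bar f(\bar\epsilon)\cdot K_2$, and since $F(K_{1,v})\subseteq K_{2,f(v)}\subseteq\partial K_2$ it sends the interior of each chamber into the interior of its image chamber; distinct chambers meet only along mirrors, so the preimage of a generic interior point $p$ of a chamber $\bar\epsilon_0\cdot K_2$ is the disjoint union, over the $2^{\abs{V(T_1)}-\abs{V(T_2)}}$ elements of $\bar f^{-1}(\bar\epsilon_0)=\bar\epsilon_*\cdot\ker(\bar f)$, of finitely many points in the interiors of the corresponding chambers of $\Dav{T_1}$. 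On one chamber $F$ is a map of pairs $(K_1,\partial K_1)\to(K_2,\partial K_2)$, i.e.\ $(D^{n+1},S^n)\to(D^{n+1},S^n)$, whose relative degree equals the degree of $F|_{\partial K_1}\colon\partial K_1\to\partial K_2$; under the identifications $\partial K_i\cong|T_i|$ this restriction is homotopic to $f$, so that relative degree is $\deg(f)$. Hence each chamber of $\Dav{T_1}$ over $\bar\epsilon_0\cdot K_2$ contributes $\pm\deg(f)$ to the local degree of $F$ at $p$.

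The only delicate point — and the step I expect to be the main obstacle — is ruling out cancellation among these $\pm$ signs. I would use that $\Dav{T}$ is orientable whenever $T$ triangulates a sphere (orient the chambers compatibly; the only obstruction sits at codimension‑$2$ strata, where the local group is $(\Z/2\Z)^2$, the reflection group of a plane quadrant, which is an orientable situation), and that with respect to such an orientation the coordinate‑flip symmetry $\sigma_{\bar\epsilon}$ of $\Dav{T_i}$, being a composition of $\abs{\Supp(\bar\epsilon)}$ reflections, has degree $\prod_v\bar\epsilon_v=(-1)^{\abs{\Supp(\bar\epsilon)}}$. Since $F$ restricted to $\bar\epsilon\cdot K_1$ is $\sigma_{\bar\epsilon_0}\circ(F|_{K_1})\circ\sigma_{\bar\epsilon}^{-1}$, the contribution of that chamber to the local degree at $p$ equals $\big(\prod_{w\in V(T_2)}(\bar\epsilon_0)_w\big)\cdot\deg(f)\cdot\big(\prod_{v\in V(T_1)}\bar\epsilon_v\big)$ up to one global sign. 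Now every $\kappa\in\ker(\bar f)$ satisfies $\prod_{v\in V(T_1)}\kappa_v=\prod_{w\in V(T_2)}\prod_{v\in f^{-1}(w)}\kappa_v=1$, so the factor $\prod_v\bar\epsilon_v$ is constant as $\bar\epsilon$ ranges over the coset $\bar f^{-1}(\bar\epsilon_0)$; the $2^{\abs{V(T_1)}-\abs{V(T_2)}}$ contributions therefore all have the same sign and add up. This yields $\abs{\deg(F)}=2^{\abs{V(T_1)}-\abs{V(T_2)}}\cdot\abs{\deg(f)}\neq 0$, proving the proposition and, as a byproduct, recording the exact value of $\abs{\deg(F)}$.
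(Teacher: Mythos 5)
Your construction is, at bottom, the same as the paper's: your coordinatewise-product map $F$ agrees on the vertices of the ambient cube with the paper's $f_*$, and the paper's induced map $f_M$ (defined on barycentric subdivisions by $\pi_{\sigma}(w)\mapsto\pi_{f(\sigma)}(f_*w)$) is just a piecewise-affine version of your $F$; even the final formula $\abs{\deg}=2^{\abs{V(T_1)}-\abs{V(T_2)}}\abs{\deg(f)}$ coincides. Where you genuinely differ is in how the degree is computed. The paper stays at the simplicial chain level: it writes an explicit fundamental cycle for $\Dav{T}$ with coefficients $\bigl(\prod_v w_v\bigr)\cdot{\rm or}(\langle\sigma_0,\dots,\sigma_n\rangle)$ (its \Cref{lemma:orientations}) and then counts, for a fixed top simplex of $\Dav{T_2}$, the pairs $(w,\sigma_n)$ mapping onto it. You instead use the $(\Z/2\Z)^{V}$-equivariance of $F$, the chamber decomposition, and local degree theory: each chamber contributes a relative degree $\pm\deg(f)$, and cancellation is excluded because every $\kappa\in\ker\bar f$ satisfies $\prod_v\kappa_v=1$ --- which is exactly the paper's identity \eqref{eq:prod_z}, while your ``$\sigma_{\bar\epsilon}$ has degree $\prod_v\epsilon_v$'' plays the role of the sign $\prod_v w_v$ in the paper's cycle. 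Your route buys a more geometric, Coxeter-flavoured picture at the cost of some analytic bookkeeping (regular values via Sard, or a PL replacement) that the chain computation avoids. Two steps deserve an explicit line, though neither is a real gap: (i) that $F|_{\partial K_1}$ has degree $\deg(f)$ --- under the identification of $\partial K_i$ with the barycentric subdivision of $T_i$, $F$ sends the barycentre of $\sigma$ to the barycentre of $f(\sigma)$, and a straight-line homotopy inside each cube face (both endpoints have vanishing coordinates on $f(\sigma_0)$) connects $F|_{\partial K_1}$ to the barycentric map induced by $f$; (ii) orientability, which you only sketch, is the paper's separate \Cref{lemma:orientations} and is needed even to state the proposition, so you should simply invoke it, after which each single coordinate flip is orientation-reversing because it is an involution of a connected orientable manifold fixing a nonempty codimension-one set pointwise.
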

	The conclusion makes sense because $\Dav{T_1}$ and $\Dav{T_2}$ are topological $(n+1)$-manifolds, and are orientable by the following lemma, which should be well known, but we include a proof below, in order to introduce explicitly representatives of the fundamental classes.
	
	\begin{lemma}\label{lemma:orientations}
		Let $T$ be a simplicial complex homeomorphic to $S^n$.
		Then $\Dav{T}$ is a closed, connected and orientable $(n+1)$-manifold.
	\end{lemma}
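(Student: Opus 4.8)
The plan is to settle compactness, connectedness and the manifold property in a few lines, and then concentrate on orientability, which I would establish by exhibiting an explicit integral fundamental cycle of $\Dav{T}$ obtained by ``inflating'' a fundamental cycle of $T$.

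First, $\Dav{T}$ is compact, being a finite cubical complex, and connected, since every $1$-dimensional face of the ambient cube $[-1,1]^{V(T)}$ has type $\emptyset$ or $\{v\}$ and hence lies in $\Dav{T}$. That $\Dav{T}$ is an $(n+1)$-dimensional manifold without boundary is consequence~(1) of the identification of links recalled in \Cref{sec:davis_construction}: the link of any cube of $\Dav{T}$ is a link in $T\cong S^n$, hence a sphere, and in particular never a disk. So $\Dav{T}$ is a closed connected $(n+1)$-manifold, and only orientability remains.

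Fix a total order on $V(T)$; it orients $\R^{V(T)}$ and, by restriction to the free coordinates, every coordinate cube of $[-1,1]^{V(T)}$. As $T$ is a triangulation of $S^n$, the group $Z_n(T;\Z) = H_n(T;\Z)$ is infinite cyclic, and a generator can be written as $[T] = \sum_\sigma d_\sigma\,\sigma$ with $\sigma$ ranging over the $n$-simplices of $T$ (each oriented by the fixed order) and each $d_\sigma \in \{+1,-1\}$; the equation $\partial[T] = 0$ says exactly that, for every $(n-1)$-simplex $\tau$ --- which is contained in precisely two $n$-simplices $\sigma_1 = \tau\cup\{j_1\}$ and $\sigma_2 = \tau\cup\{j_2\}$ --- one has $d_{\sigma_1}(-1)^{k_1-1} + d_{\sigma_2}(-1)^{k_2-1} = 0$, where $k_i$ is the position of $j_i$ inside $\sigma_i$. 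For an $n$-simplex $\sigma$ of $T$ and $p \in \{+1,-1\}^{V(T)\setminus\sigma}$, write $C(\sigma,p)$ for the top cube of $\Dav{T}$ of type $\sigma$ whose coordinates outside $\sigma$ are fixed according to $p$, oriented by the fixed order; these are exactly the $(n+1)$-cubes of $\Dav{T}$. I would then propose the chain
\[ z \ =\ \sum_{\sigma}\ \sum_{p}\ d_\sigma\Bigl(\prod_{w\notin\sigma}p_w\Bigr)\,C(\sigma,p)\ \in\ C_{n+1}(\Dav{T};\Z), \]
claim $\partial z = 0$, and conclude as follows: $z$ is a nonzero cycle --- all its coefficients are $\pm1$ --- in the top degree of a finite-dimensional chain complex, hence determines a nonzero, primitive class in $H_{n+1}(\Dav{T};\Z)$; for a closed connected manifold this simultaneously forces orientability and identifies $[z] = \pm[\Dav{T}]$ as an integral fundamental class, given by the explicit formula above. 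This is the representative that the proof of \Cref{prop:nonzero_induced} will push forward.

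The only step requiring genuine care --- and the main obstacle --- is the verification $\partial z = 0$, which is a matter of reconciling the sign conventions of the cubical and simplicial boundary operators. From the cubical boundary formula $\partial C(\sigma,p) = \sum_{j\in\sigma}(-1)^{k_\sigma(j)-1}\bigl(C(\sigma\setminus\{j\},p\cup\{j\mapsto 1\}) - C(\sigma\setminus\{j\},p\cup\{j\mapsto -1\})\bigr)$, one reads off the coefficient of an arbitrary codimension-one cube $C(\tau,q)$ in $\partial z$: the two $n$-simplices $\sigma_1,\sigma_2\supseteq\tau$ each contribute one term, and the factors $p_w$ combine with the sign of the fixed value $q_{j_i} = \pm1$ precisely so that the coefficient collapses to $\bigl(\prod_{w\notin\tau}q_w\bigr)\bigl(d_{\sigma_1}(-1)^{k_1-1} + d_{\sigma_2}(-1)^{k_2-1}\bigr)$, which vanishes by the cycle condition for $[T]$ recorded above. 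I would carry out this bookkeeping in full; it is somewhat tedious but holds no surprises once the boundary formula and the sign choice $d_\sigma\prod_{w\notin\sigma}p_w$ are in place.
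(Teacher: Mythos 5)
Your proof is correct, but it takes a genuinely different route from the paper's. You work directly in the cubical cellular chain complex of $\Dav{T}$: starting from a simplicial fundamental cycle $\sum_\sigma d_\sigma\sigma$ of $T$, you weight each top cube $C(\sigma,p)$ by $d_\sigma\prod_{w\notin\sigma}p_w$, and the sign bookkeeping you outline does close up --- each codimension-one cube $C(\tau,q)$ receives exactly two contributions, from the two $n$-simplices $\sigma_i=\tau\cup\{j_i\}$ containing $\tau$, and the factor $q_{j_i}$ absorbed into $\prod_{w\notin\tau}q_w$ reduces the total coefficient to $\bigl(\prod_{w\notin\tau}q_w\bigr)\bigl(d_{\sigma_1}(-1)^{k_1-1}+d_{\sigma_2}(-1)^{k_2-1}\bigr)=0$, as you say; and a nonzero top-degree cellular cycle with a $\pm1$ coefficient indeed forces orientability and gives $[z]=\pm[\Dav{T}]$. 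The paper instead builds its explicit representative in the \emph{barycentric subdivision} of $\Dav{T}$, as a signed sum of simplices $\langle w;\emptyset,\sigma_0,\dots,\sigma_n\rangle$ with signs $\prod_v w_v\cdot{\rm or}(\langle\sigma_0,\dots,\sigma_n\rangle)$, and verifies the cancellation there. Your version is leaner (fewer indices, no subdivision), and morally the two cycles are the same up to subdivision. What the paper's choice buys is compatibility with the sequel: the induced map $f_M$ in \Cref{prop:nonzero_induced} is simplicial only with respect to the barycentric subdivisions, so the degree count there is performed directly on the paper's simplicial representative; your cubical cycle proves the lemma just as well, but your closing remark that it is ``the representative that the proof of \Cref{prop:nonzero_induced} will push forward'' is not quite accurate --- one would first have to subdivide it (or redo the degree computation cubically) to make it interact with $f_M$.
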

	
	Recall that, given two simplicial complexes $T_1$ and $T_2$, both homeomorphic to $S^n$,
	we say that $T_2$ dominates $T_1$, and write $T_1 \ldom T_2$, if there is a simplicial map from $T_2$ to $T_1$ of nonzero degree.
	
	\begin{corollary}\label{cor:possv}
		Let $T_1$ and $T_2$ be simplicial complexes homeomorphic to $S^n$.
		If $T_1 \ldom T_2$, then $\SV{\Dav{T_1}} \le \SV{\Dav{T_2}}$.
	\end{corollary}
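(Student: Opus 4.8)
The plan is to combine Proposition~\ref{prop:nonzero_induced} with the elementary monotonicity of simplicial volume under maps of nonzero degree, recalled as the first item in the list of classical facts in \Cref{sec:pre}. First, I would unwind \Cref{def:dominance}: the hypothesis $T_1 \ldom T_2$ means precisely that there exists a simplicial map $g \colon T_2 \to T_1$ with $\deg(g) \neq 0$. Since $T_1$ and $T_2$ are both homeomorphic to $S^n$, \Cref{lemma:orientations} guarantees that $\Dav{T_1}$ and $\Dav{T_2}$ are closed, connected, orientable $(n+1)$-manifolds; after fixing orientations on them, the notions of degree and of simplicial volume appearing in the statement are well defined.

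Next I would feed $g$ into Proposition~\ref{prop:nonzero_induced}, with the roles of $T_1$ and $T_2$ there played by $T_2$ and $T_1$ here: this produces a continuous map $F \colon \Dav{T_2} \to \Dav{T_1}$ of nonzero degree. The final step is to apply the functoriality estimate $\abs{\deg(F)} \cdot \SV{\Dav{T_1}} \le \SV{\Dav{T_2}}$; since $\deg(F)$ is a nonzero integer we have $\abs{\deg(F)} \ge 1$, and therefore $\SV{\Dav{T_1}} \le \SV{\Dav{T_2}}$, which is the claim.

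In this argument there is no genuine obstacle: all the substance has been pushed into Proposition~\ref{prop:nonzero_induced}, whose proof is where the real work lies (constructing the induced map between Davis' cubical complexes and controlling its degree). The only points that deserve a moment of care are verifying that the orientability needed to speak of degree is indeed supplied by \Cref{lemma:orientations}, and keeping track of the direction of the inequality: the nonzero-degree map $F$ goes \emph{from} $\Dav{T_2}$ \emph{to} $\Dav{T_1}$, so it is the simplicial volume of the target, $\SV{\Dav{T_1}}$, that gets bounded above by $\SV{\Dav{T_2}}$ — consistently with the intuition that being dominated cannot increase simplicial volume.
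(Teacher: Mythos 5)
Your proposal is correct and follows exactly the paper's own argument: unwind the dominance relation, apply \Cref{prop:nonzero_induced} to get a nonzero-degree continuous map $\Dav{T_2}\to\Dav{T_1}$, and conclude via the classical inequality $\abs{\deg(F)}\cdot\SV{\Dav{T_1}}\le\SV{\Dav{T_2}}$. Your additional remarks on orientability via \Cref{lemma:orientations} and the direction of the inequality are accurate but do not change the substance.
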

	\begin{proof}
		Let $f:T_2 \to T_1$ be a simplicial map of nonzero degree.
		By \Cref{prop:nonzero_induced}, there is a continuous map $f_M:\Dav{T_2} \to \Dav{T_1}$ with $\abs{\deg(f_M)} \ge 1$.
		Then, we have $\abs{\deg(f_M)} \cdot \SV{\Dav{T_1}} \le \SV{\Dav{T_2}}$, as follows from the classical properties of simplicial volume recalled in \Cref{sec:pre}.
	\end{proof}
	
	To prove \Cref{prop:nonzero_induced} we use a general procedure that, starting from a simplicial map $f$ between simplicial complexes, produces a map $f_M$ between the associated cubical complexes; then, we check that in the situation of \Cref{prop:nonzero_induced} the map $f_M$ has nonzero degree.
	
	Before proving \Cref{lemma:orientations} and \Cref{prop:nonzero_induced}, let us start with a description of the barycentric subdivision of $\Dav{T}$, where $T$ is a finite simplicial complex.
	The map $f_M$ will be defined as a simplicial map between the barycentric subdivisions of $\Dav{T_1}$ and $\Dav{T_2}$.
	
	If $\sigma$ is a simplex of $T$ (thought of as a set of vertices, possibly empty), we denote by
    $\pi_\sigma : [-1,1]^{V(T)} \to [-1,1]^{V(T)}$
    the orthogonal projection on
    $\{(x_v)_{v \in V(T)} \mid x_v = 0 \text{ for every } v \in \sigma\}$.
    In other words, $\pi_\sigma$ sets to $0$ the coordinates corresponding to vertices of $\sigma$.
    
    Take now a vertex $w \in \{-1,+1\}^{V(T)}$ of the ambient cube, along with a strictly increasing sequence $\sigma_0 \subset \dots \subset \sigma_d$ of simplices of $T$. Here we allow $\sigma_0$ to be the empty simplex, and $d$ can be any natural number (but is forced to be at most one more than the dimension of $T$).
    From this data we get a $d$-simplex in $\Dav{T}$ with vertices
    \[ \pi_{\sigma_0}(w), \dots, \pi_{\sigma_d}(w).\]
    In fact these points all lie (and are affinely independent) in the cell of type $\sigma_d$ whose coordinates ``outside $\sigma_d$'' are prescribed by $w$; so we can consider the affine simplex having these points as vertices.
    The simplices obtained in this way will be denoted by $\langle w;\sigma_0, \dots, \sigma_d\rangle$.
    The simplices of this form, obtained by varying the chosen vertex and the chain of simplices of $T$, give a triangulation of $\Dav{T}$ (its barycentric subdivision).
    
    \begin{remark}
    	Two simplices $\langle w;\sigma_0, \dots, \sigma_d\rangle$ and $\langle w';\sigma'_0, \dots, \sigma'_{d'}\rangle$ are equal if and only if $d = d'$, $\sigma_i=\sigma'_i$ for every $i$, and $w_v = w'_v$ for every $v \notin \sigma_0$.
    \end{remark}
	
	\begin{proof}[Proof of \Cref{lemma:orientations}]
		We fix an orientation on $T$ and describe how it induces (canonically) an orientation on $\Dav{T}$.
		We work with simplicial orientations on the barycentric subdivisions of $T$ and $\Dav{T}$.
		
		For the barycentric subdivision of $\Dav{T}$ we use the description introduced in the discussion above; as for $T$, we represent the simplices of its barycentric subdivision as
		$\langle \sigma_0, \dots, \sigma_d \rangle$,
		where $\sigma_0\subset \dots\subset \sigma_d$ is a strictly increasing sequence of simplices of $T$, and $\sigma_0$ is nonempty. This stands for the simplex whose vertices are the barycenters of $\sigma_0,\dots,\sigma_d$.
		
		The simplex $\langle \sigma_0, \dots, \sigma_d \rangle$ has an intrinsic orientation, since its vertices are canonically ordered.
		On the other hand, when $d = n$, it is a top-dimensional simplex of an oriented simplicial sphere, from which it inherits an orientation that may differ from the intrinsic one.
		We set ${\rm or}(\langle \sigma_0, \dots, \sigma_n \rangle)$ equal to $+1$ or $-1$, in case the two orientations agree or disagree, respectively.
		
		We conclude by proving that the following expression gives a simplicial representative of a generator of the top-dimensional homology $H_{n+1}(\Dav{T};\Z)$:
		\[ \sum_{w \in \{-1,+1\}^{V(T)}} \sum_{\langle \sigma_0, \dots,\sigma_n \rangle} \left(\prod_{v \in V(T)} w_v\right) \cdot {\rm or}(\langle \sigma_0,\dots,\sigma_n \rangle) \cdot \langle w;\emptyset,\sigma_0,\dots,\sigma_n \rangle.\]
		Again, we are using the fact that
		$\langle w;\emptyset,\sigma_0,\dots,\sigma_n \rangle$
		has an intrinsic orientation.
		We need to compute the boundary and prove that it vanishes.
		We obtain the sum, over $w$ and $\sigma_0,\dots,\sigma_n$ as above, of the terms
		\begin{align*}
			\left(\prod_{v \in V(T)}w_v\right) &\cdot {\rm or}(\langle \sigma_0,\dots,\sigma_n\rangle) \\
			&\cdot
			\left(
			\langle w;\sigma_0,\dots,\sigma_n \rangle
		 	- \sum_{i=0}^n (-1)^i\langle w;\emptyset,\sigma_0,\dots,\hat{\sigma_i},\dots,\sigma_n \rangle
			\right).
		\end{align*}
		In this expression, the simplex $\langle w;\sigma_0,\dots,\sigma_n \rangle$ cancels with $\langle w';\sigma_0,\dots,\sigma_n \rangle$, where $w'_v = w_v$ for $v \notin\sigma_0$ and $w'_v = -w_v$ for the unique vertex in $\sigma_0$.
		Each simplex of the form $\langle w;\emptyset,\sigma_0,\dots,\hat{\sigma_i},\dots,\sigma_n\rangle$, on the other hand, appears twice in the expression, with opposite ${\rm or}(\langle \sigma_0,\dots,\sigma_n\rangle)$.
	\end{proof}
	
	We now describe how $f:T_1 \to T_2$ induces a continuous map $f_M:\Dav{T_1} \to \Dav{T_2}$.
	Define $f_*:\{-1,+1\}^{V(T_1)} \to \{-1,+1\}^{V(T_2)}$ using the following formula:
	\[ (f_*w)_u \ =\ \prod_{v \in f^{-1}(\{u\})} w_v\]
    for every $w \in \{-1,+1\}^{V(T)}$ and $u \in V(T_2)$.
    Then, we define $f_M$ as a simplicial map between the barycentric subdivisions $\Dav{T_1}$ and $\Dav{T_2}$, sending a vertex of the form $\pi_{\sigma}(w)$ to $\pi_{f(\sigma)}(f_*w)$.
    This definition makes sense because of the following observations:
    \begin{itemize}
    	\item
    	$f(\sigma)$ is a simplex of $T_2$, thus, $\pi_{f(\sigma)}(f_*w)$ is a vertex of the barycentric subdivision of $\Dav{T_2}$;
    	\item
    	If $\pi_\sigma(w) = \pi_\sigma(w')$ are two representations of the same vertex, then $w_v = w'_v$ for every $v \notin \sigma$.
    	Therefore, for every $u \notin f(\sigma)$ we have
    	\[ (f_*w)_u\ =\ \prod_{v \in f^{-1}(\{u\})}w_v\ =\ \prod_{v \in f^{-1}(\{u\})}w_v\ =\ (f_*w')_u, \]
    	so $f_M$ is well defined at least for vertices;
    	\item
    	$f_M$ is simplicial: the simplex $\langle w;\sigma_0,\dots,\sigma_d\rangle$ is sent to the (possibly lower-dimensional) simplex $\langle f_*w;\tau_0,\dots,\tau_k\rangle$, where $\tau_0 \subset \dots \subset \tau_k$ are the simplices of $T_2$ of the form $f(\sigma_i)$ for some $i\in\{0,\dots,d\}$.
    \end{itemize}
    
    \begin{proof}[Proof of \Cref{prop:nonzero_induced}]
    We show that, if $T_1$ and $T_2$ are homeomorphic to $S^n$, then $\deg(f_M)=2^{\abs{V(T_1)}-\abs{V(T_2)}}\cdot\deg(f)$.
    Here, we have fixed arbitrary orientations on $T_1$ and $T_2$, and the induced orientations (as described in the proof of \Cref{lemma:orientations}) on $\Dav{T_1}$ and $\Dav{T_2}$.
    We only treat the case in which $\deg(f) \neq 0$, even though the formula holds also in the zero-degree case.
    This assumption implies that $f:V(T_1)\to V(T_2)$ is surjective, and in particular $\abs{V(T_1)}-\abs{V(T_2)} \ge 0$.
    
    Let $\langle z;\emptyset,\tau_0,\dots,\tau_n\rangle$ be a top-dimensional simplex in $\Dav{T_2}$.
    The set of top-dimensional simplices of $\Dav{T_1}$ whose image under $f_M$ is equal to $\langle z;\emptyset,\tau_0,\dots,\tau_n\rangle$ is in bijection with the set of pairs $(w,\sigma_n)$ where:
    \begin{itemize}
    	\item
    	$w \in \{-1,+1\}^{V(T_1)}$ is such that $f_*w = z$, that is,
    	\[z_u\ =\ \prod_{v\text{ with }f(v)=u} w_v\]
    	for every $u \in V(T_2)$.
    	In particular, it must hold
    	\begin{equation}\label{eq:prod_z}
    		\prod_{v\in V(T_1)} w_v \ =\ \prod_{u\in V(T_2)} z_u;
    	\end{equation}
    	\item
    	$\sigma_n$ is a top-dimensional simplex of $T_1$ with $f(\sigma_n) = \tau_n$.
    \end{itemize}
	Note that any such $\sigma_n$ uniquely determines a simplex $\langle \sigma_0, \dots, \sigma_{n}\rangle$ in the barycentric subdivision of $T_1$ that is sent by $f$ to $\langle \tau_0, \dots, \tau_{n}\rangle$.
    Summing over such $\sigma_n$'s, we obtain
    \begin{equation}\label{eq:sum_or}
    	\deg(f) \cdot {\rm or}(\langle \tau_0,\dots,\tau_n \rangle)\ =\  \sum_{\sigma_n}{\rm or}(\langle \sigma_0,\dots,\sigma_n\rangle).
    \end{equation}
    From \eqref{eq:prod_z}, \eqref{eq:sum_or}, the explicit description of the simplicial orientation cycle in the proof of \Cref{lemma:orientations}, and the fact that $w$ can be chosen in $2^{\abs{V(T_1)}-\abs{V(T_2)}}$ ways (for every fixed $z$), we get the desired formula for $\deg(f_M)$.
    \end{proof}

	\section{Davis' manifolds of dimension 3}\label{sec:3dim}
	In this section we characterize the triangulations of $S^2$ that give rise to three-dimensional manifolds with positive simplicial volume.
	It turns out that the smallest triangulation of $S^2$ with this property is the one described in \Cref{fig:T9}.
	It is a flag triangulation with $9$ vertices, and we call it $T_9$.
	\begin{figure}[ht]
		\centering
		\begin{minipage}{.4\textwidth}
			\centering
			\includegraphics[scale=1]{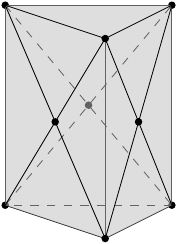}
			\caption{The $T_9$ triangulation of $S^2$.}
			\label{fig:T9}
		\end{minipage}%
		\hspace{.1\textwidth}
		\begin{minipage}{.4\textwidth}
			\centering
			\includegraphics[scale=1]{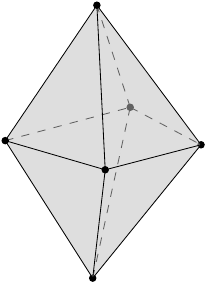}
			\caption{The octahedral triangulation of $S^2$.}
			\label{fig:octa}
		\end{minipage}
	\end{figure}
	The main result of the section is the following.
	\begin{theorem}\label{thm:2dim}
		Let $T$ be a simplicial complex homeomorphic to $S^2$.
		The following conditions are equivalent:
		\begin{enumerate}[label=(\roman*)]
			\item \label{it:2_pos}
			The three-dimensional manifold $\Dav{T}$ has positive simplicial volume;
			\item \label{it:2_t9}
			There is a simplicial map $T\to T_9$ of degree $1$;
			\item \label{it:2_dom}
			$T_9 \ldom T$.
		\end{enumerate}
	\end{theorem}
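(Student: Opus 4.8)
I would prove the equivalence as a cycle $\ref{it:2_pos}\Rightarrow\ref{it:2_t9}\Rightarrow\ref{it:2_dom}\Rightarrow\ref{it:2_pos}$, two arrows of which are essentially free. The implication $\ref{it:2_t9}\Rightarrow\ref{it:2_dom}$ is immediate from \Cref{def:dominance}. For $\ref{it:2_dom}\Rightarrow\ref{it:2_pos}$ it suffices to know that $\SV{\Dav{T_9}}>0$: since $T_9$ is flag, $\Dav{T_9}$ is a closed aspherical $3$-manifold by \Cref{it:cat0}, and running the polynomial-time algorithm of \cite{BHS17} on the $1$-skeleton of $T_9$ certifies that the right-angled Coxeter group $\Gamma_{T_9}$ is relatively hyperbolic with respect to a finite collection of proper subgroups; hence so is $\pi_1(\Dav{T_9})$ by \Cref{prop:relhyp_coxeter}, and $\SV{\Dav{T_9}}>0$ by \Cref{prop:relhyp_sv}. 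Then $T_9\ldom T$ together with \Cref{cor:possv} gives $\SV{\Dav{T}}\ge\SV{\Dav{T_9}}>0$. So everything hinges on $\ref{it:2_pos}\Rightarrow\ref{it:2_t9}$, which I would prove by strong induction on $\abs{V(T)}$, building a degree-$\pm1$ simplicial map $T\to T_9$ (the sign can always be made $+1$ by choosing the orientation of $T$ suitably, since $T$ carries no preferred orientation).

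The inductive mechanism is to decompose $\Dav{T}$ along spheres and tori that are already recorded in $T$. If $T$ is not flag then either $T=\Bd\Delta^3$, so that $\Dav{T}\cong S^3$ has zero simplicial volume and the hypothesis is vacuous, or $T$ has an empty triangle $\{a,b,c\}$: the $3$-cycle on $\{a,b,c\}$ separates $S^2$, realizes $\Dav{T}$ as a connected sum $\Dav{T_1}\#\Dav{T_2}$, and cutting $T$ along it and filling in the triangle on each side produces triangulations $T_1,T_2$ of $S^2$ with strictly fewer vertices than $T$. If $T$ contains a square $Q$ that separates $S^2$ — in which case, since the diagonals of $Q$ are not edges, each complementary disk has at least one interior vertex — then $\Dav{Q}\cong S^1\times S^1$ (by \Cref{rmk:join_product}) is a $\pi_1$-injective two-sided separating torus in $\Dav{T}$; cutting along it, and capping off each side of $Q$ by a cone from a fresh vertex (which keeps the pieces flag), produces flag triangulations $T_1,T_2$ of $S^2$, again with strictly fewer vertices whenever each complementary disk carries at least two interior vertices (the case of a one-vertex side being the octahedron-type situation treated in the base case). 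Using the additivity of simplicial volume under connected sums in dimension $\ge3$ and under gluings along submanifolds with amenable fundamental group — tori here — both of which follow from bounded cohomology and require no geometrization, one gets $\SV{\Dav{T}}=\SV{\Dav{T_1}}+\SV{\Dav{T_2}}$ (relative simplicial volumes on the pieces in the torus case). Moreover, folding each side of the cutting cycle onto that cycle and into the cap gives a degree-$\pm1$ simplicial map $T\to T_i$ for $i=1,2$, so if $\SV{\Dav{T}}>0$ then some $T_i$ has positive simplicial volume and fewer vertices, whence a degree-$1$ map $T_i\to T_9$ by induction, and pre-composition finishes this case.

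What remains, and where the real work lies, is triangulations $T$ that are flag and contain no separating square with two interior vertices on each side — the \emph{irreducible} ones. The triangulations with at most $8$ vertices are handled by a finite explicit inspection: the flag ones form a short list, and for each $\Dav{T}$ turns out to be (possibly after the decomposition above) a product $\Dav{T'}\times S^1$, when $T$ is a suspension, via \Cref{rmk:join_product}, or to carry a self-map of degree $\ge2$, so $\SV{\Dav{T}}=0$ by the estimates recalled in \Cref{sec:pre}; thus no triangulation with at most $8$ vertices has positive simplicial volume, which anchors the induction, and $T_9$ is the next case. For an irreducible $T$ with at least $9$ vertices and $\SV{\Dav{T}}>0$, the idea is to simplify $T$ further by a degree-$\pm1$ reduction move — collapsing an edge not contained in a square (which preserves flagness, by the fact recalled at the end of \Cref{sec:pre}), or, when every edge lies in a square, another move adapted to that situation — and to invoke the inductive hypothesis on the (smaller) result. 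Two points have to be secured, and together they are the crux of the theorem. First, one must show that an irreducible $T$ with positive simplicial volume which is not isomorphic to $T_9$ always admits such a reduction move that, moreover, does not drive the simplicial volume to zero; here the relevant collapses should correspond geometrically to cutting $\Dav{T}$ along a sphere one of whose sides is a ball, or along a torus one of whose sides is $S^1\times S^1\times[0,1]$, so that $\Dav{T}$ is changed only by a trivial connected summand or a collar and the simplicial volume is preserved. Second, one must check the finitely many irreducible triangulations where this analysis bottoms out, pinning down that the minimal irreducible triangulation with positive simplicial volume is exactly $T_9$ — this is where the number $9$, and the combinatorics governing relative hyperbolicity of $\Gamma_T$, genuinely enter, and it is the step I expect to be the main obstacle.
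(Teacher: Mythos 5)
Your scaffolding matches the paper's: the implications \ref{it:2_t9}$\Rightarrow$\ref{it:2_dom} and \ref{it:2_dom}$\Rightarrow$\ref{it:2_pos} are handled exactly as in the paper (relative hyperbolicity of $\Gamma_{T_9}$ via \cite{BHS17}, then \Cref{prop:relhyp_coxeter}, \Cref{prop:relhyp_sv} and \Cref{cor:possv}), and your treatment of nonflag $T$ by cutting along empty triangles, and of separating squares by cutting along the corresponding $\pi_1$-injective tori and invoking Gromov additivity, is in substance what the paper does (your equality $\SV{\Dav{T}}=\SV{\Dav{T_1}}+\SV{\Dav{T_2}}$ is off by powers of $2$ coming from the many parallel copies of the cutting torus in $\Dav{T}$, but only positivity is needed, so this is cosmetic). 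The problem is that you stop exactly where the theorem actually lives. You yourself flag the two remaining points as ``the crux'' and ``the main obstacle'': (a) that an irreducible flag $T\not\cong T_9$ with positive simplicial volume admits a reduction move that does not destroy positivity, and (b) that the analysis bottoms out at $T_9$. Neither is proved, and (a) as you state it is not a safe claim: a general collapse of an edge not contained in a square does \emph{not} correspond to removing a collar or a trivial connected summand from $\Dav{T}$ --- the number of vertices drops, so the Davis manifold changes by a global two-to-one-type degeneration, and whether positivity of simplicial volume survives such a collapse is precisely the kind of question the paper leaves open in higher dimension (\Cref{q:collapse}). The paper avoids this trap: the only volume-controlled move it uses is the very special ``elementary reduction'' (collapse of an edge \emph{both} of whose endpoints have valence $4$), for which \Cref{lemma:elementary_red} proves $\SV{\Dav{T}}=2\SV{\Dav{T'}}$ by cutting along tori and applying Gromov additivity.

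The missing idea that makes the induction close is the paper's \Cref{prop:reduction_t9}: once no elementary reduction is possible and $T$ is not the octahedron (equivalently, every edge has an endpoint of valence at least $5$), there is a degree-$1$ simplicial map $T\to T_9$ \emph{unconditionally}, with no reference to simplicial volume. This is proved by taking a minimal counterexample and running a three-case combinatorial analysis (no squares; a square that is not a vertex link; all squares are vertex links), the last case ending with an explicit construction of the map onto $T_9$. In other words, the paper replaces your step (a)+(b) --- which would require knowing which collapses preserve positivity and classifying the ``irreducible'' triangulations of small size --- by a purely combinatorial dominance statement, so that positivity only needs to be ruled in for $T_9$ and ruled out for the octahedron (the $3$-torus). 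Without an argument of this kind, your induction has no way to terminate: the base-case inspection of flag triangulations with at most $8$ vertices is also only asserted (``turns out to be a product or to carry a self-map of degree $\ge 2$''), and in the paper it is not needed in that form, since after elementary reductions the only terminal object with zero volume is the octahedron. So the proposal is a correct outline of the easy implications and of the sphere/torus decomposition step, but the central combinatorial content of \ref{it:2_pos}$\Rightarrow$\ref{it:2_t9} is absent.
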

	The proof consists of two parts:
	\begin{enumerate}[label=(\arabic*)]
		\item
		Showing that $\Dav{T_9}$ has positive simplicial volume, thus obtaining the implication \ref{it:2_dom} $\implies$ \ref{it:2_pos} thanks to \Cref{prop:nonzero_induced}.
		To this aim, we will use the fact that the fundamental group of $\Dav{T_9}$ is relatively hyperbolic;
		\item \label{it:second_part}
		Showing that, if $\Dav{T}$ has positive simplicial volume, then $T$ dominates $T_9$ with a map of degree $1$, thus establishing \ref{it:2_pos} $\implies$ \ref{it:2_t9}.
	\end{enumerate}
	Of course, the implication \ref{it:2_t9} $\implies$ \ref{it:2_dom} is obvious.
	
	We will first prove \ref{it:second_part} in the case where $T$ is flag.
	The strategy will be to employ certain elementary reductions on $T$ that do not affect the positivity $\SV{\Dav{T}}$ and that, starting from any flag triangulation of $S^2$, either lead to the octahedral triangulation (see \Cref{fig:octa}) or to a triangulation satisfying a certain combinatorial property that guarantees condition \ref{it:2_t9}.
	
	Then, to extend the result to a nonflag triangulation $T$, we will cut $T$ along the ``empty triangles'' that prevent $T$ from being flag.
	At the level of $\Dav{T}$, this corresponds to cutting along spheres; \ie, we will reduce the study of $\Dav{T}$ to the study of its prime summands.

	\begin{remark}\label{rmk:rel_hyp}
		In general, a closed aspherical $3$-manifold has positive simplicial volume if and only if its fundamental group is relatively hyperbolic (with respect to a collection of proper subgroups).
		There is a general procedure to decide whether a right-angled Coxeter group is relatively hyperbolic \cite{BHS17}, and thanks to \Cref{prop:relhyp_coxeter} we get an effective way to decide whether $\Dav{T}$ has positive simplicial volume whenever $T$ is a flag triangulation of $S^2$.
		The point of \Cref{thm:2dim} is not so much obtaining such an effective procedure, but to show that the triangulations giving positive simplicial volume all dominate (in the sense of \Cref{def:dominance}) a single triangulation that can then be studied explicitly.
		This motivates us to try to employ the same strategy also in higher dimensions.
		Note that, in dimension $\ge 4$, the simplicial volume can be positive even when the fundamental group is not relatively hyperbolic.
	\end{remark}
	
	\begin{proposition}
		The simplicial volume of $\Dav{T_9}$ is positive.
	\end{proposition}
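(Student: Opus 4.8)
The plan is to obtain positivity of the simplicial volume as a consequence of \Cref{prop:relhyp_sv}. First I would record the two structural facts about $\Dav{T_9}$ that put us in the situation of that proposition: since $T_9$ is homeomorphic to $S^2$, \Cref{lemma:orientations} guarantees that $\Dav{T_9}$ is a closed, connected, orientable $3$-manifold; and since $T_9$ is flag, $\Dav{T_9}$ is aspherical (as recalled in \Cref{sec:davis_construction}). So it remains to prove that $\pi_1(\Dav{T_9}) \cong G_{T_9}$ is relatively hyperbolic with respect to a finite collection of proper subgroups, and by \Cref{prop:relhyp_coxeter} this is equivalent to the same statement for the right-angled Coxeter group $\Gamma_{T_9}$ attached to the $1$-skeleton of $T_9$.

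The core of the argument is thus to certify that $\Gamma_{T_9}$ is relatively hyperbolic. I would do this via the combinatorial criterion underlying the polynomial-time algorithm of \cite{BHS17}: relative hyperbolicity of a right-angled Coxeter group, together with its peripheral structure, is detected by the way the induced $4$-cycles (the ``squares'') of the defining graph are distributed. Concretely, one exhibits a finite family of proper induced subgraphs $\Lambda_1, \dots, \Lambda_m$ of the $1$-skeleton of $T_9$ such that every square is contained in some $\Lambda_j$, any two distinct $\Lambda_j$'s overlap only in a complete subgraph, and no $\Gamma_{\Lambda_j}$ decomposes further in this fashion; then $\Gamma_{T_9}$ is hyperbolic relative to $\{\Gamma_{\Lambda_1}, \dots, \Gamma_{\Lambda_m}\}$. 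With the explicit triangulation $T_9$ in hand (\Cref{fig:T9}) this is a finite verification, and as an independent check one can simply run the algorithm of \cite{BHS17} on the $1$-skeleton of $T_9$. Once $\Gamma_{T_9}$ is known to be relatively hyperbolic, \Cref{prop:relhyp_coxeter} transfers this to $\pi_1(\Dav{T_9})$, and \Cref{prop:relhyp_sv} yields $\SV{\Dav{T_9}} > 0$.

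The main obstacle is precisely that middle step. The combinatorial criterion is clean to state only informally, and carrying out the check by hand demands careful bookkeeping of the squares of $T_9$ and of how the corresponding dihedral-type parabolic subgroups intersect; it is most naturally organised as the description of a graph-of-groups (JSJ-type) decomposition of $\Gamma_{T_9}$ in which every vertex group is either one of the peripherals or hyperbolic. A more hands-on alternative would be to recognise $\Dav{T_9}$, or one of its prime or JSJ pieces, as an explicit hyperbolic $3$-manifold and quote geometrization; but since the relative-hyperbolicity route uses only material already developed in \Cref{sec:pre,sec:davis_construction} and sidesteps geometrization, I would take that route.
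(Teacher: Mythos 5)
Your proposal is correct and follows essentially the same route as the paper: reduce to relative hyperbolicity of the right-angled Coxeter group $\Gamma_{T_9}$ via \Cref{prop:relhyp_coxeter}, certify it with the criterion/algorithm of \cite{BHS17}, and conclude by \Cref{prop:relhyp_sv}. The paper simply records the outcome of that finite check, namely that the thick (peripheral) subgroups are the ones corresponding to the three lateral squares visible in \Cref{fig:T9}.
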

	\begin{proof}
		Once we establish that the fundamental group of $\Dav{T_9}$ is relatively hyperbolic, the conclusion follows from \Cref{prop:relhyp_sv}.
		Let $\Gamma$ be the right-angled Coxeter group defined by the $1$-skeleton of $T_9$.
		By \Cref{prop:relhyp_coxeter} it is enough to show that $\Gamma$ is relatively hyperbolic.
		This is easily checked using the algorithmic procedure described in \cite{BHS17}, which can be used for any right-angled Coxeter group.
		The thick subgroups (this is the terminology used in \cite{BHS17}) of $\Gamma$, with respect to which $\Gamma$ is relatively hyperbolic, are those corresponding to the three subcomplexes given by the lateral squares in \Cref{fig:T9}.
	\end{proof}
	
	To prove the remaining implication of \Cref{thm:2dim}, we start by introducing a way of simplifying a flag triangulation of $S^2$ without altering the positivity of the simplicial volume of the corresponding $3$-manifold.
	\begin{definition}
		Let $T$ be a flag triangulation of $S^2$, and suppose that it contains a subcomplex isomorphic to the one in \Cref{fig:s2bigp}, in such a way that the vertices labelled with $A$ and $B$ are not adjacent in $T$.
		An elementary reduction consists in replacing this subcomplex with the one in \Cref{fig:s2smallp}.
		In other words, it is the collapse of an edge whose vertices have valence $4$.
	\end{definition}
	\begin{figure}[ht]
		\centering
		\begin{minipage}{.4\textwidth}
			\centering
			\includegraphics[scale=1]{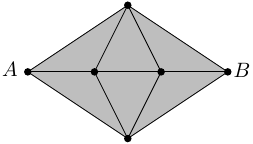}
			\caption{The subcomplex to be replaced.}
			\label{fig:s2bigp}
		\end{minipage}%
		\hspace{.1\textwidth}
		\begin{minipage}{.4\textwidth}
			\centering
			\includegraphics[scale=1]{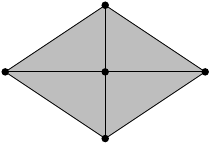}
			\caption{The subcomplex that replaces the previous one.}
			\label{fig:s2smallp}
		\end{minipage}
	\end{figure}
	The only case in which the vertices labelled $A$ and $B$ can be adjacent (so, we do \emph{not} allow the collapse) is the one in which $T$ is isomorphic to the octahedral triangulation.
	It is readily seen that the result $T'$ of an elementary reduction is another flag triangulation of $S^2$, and that there is a simplicial map $T\to T'$ of degree $1$.
	
	\begin{lemma}\label{lemma:elementary_red}
		If $T'$ is obtained from a flag triangulation $T$ by an elementary reduction, then $\SV{\Dav{T}} = 2 \SV{\Dav{T'}}$.
	\end{lemma}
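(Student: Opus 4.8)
The plan is to realise both $\Dav T$ and $\Dav{T'}$ as closed $3$-manifolds glued along $\pi_1$-injective tori, in such a way that the two gluings differ only by the number of copies of one fixed piece $\Dav{T_-}$ (which doubles) together with pieces of vanishing relative simplicial volume. First I fix notation for the reduction: it collapses an edge $\{x,y\}$ whose endpoints have valence $4$. Since $T$ is a flag triangulation of $S^2$, the set $\Link{\{x,y\}}{T}$ consists of two vertices $C,D$, with $C\not\sim D$ (otherwise $\{x,y,C,D\}$ would be a $3$-simplex); writing $A$, resp.\ $B$, for the remaining neighbour of $x$, resp.\ of $y$, we have that $\Link{\{x\}}{T}$ is the $4$-cycle $C\,y\,D\,A$, that $\Link{\{y\}}{T}$ is the $4$-cycle $C\,x\,D\,B$, and that $A\not\sim B$ by the definition of elementary reduction. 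Let $T_+$ be the union of the closed stars of $x$ and $y$ in $T$; it is the disc on the six vertices $\{A,B,C,D,x,y\}$ with boundary the full $4$-cycle $P=(A\,C\,B\,D)$. Let $T_-$ be the full subcomplex of $T$ on $V(T)\setminus\{x,y\}$; using that $x,y$ have valence $4$ one checks that $T_-$ is the complementary disc, that $T=T_-\cup_P T_+$, that no edge of $T$ joins $\{x,y\}$ to $V(T_-)\setminus V(P)$, and that $P$ is full in both $T_-$ and $T_+$. The reduced complex is then $T'=T_-\cup_P(x\star P)$, where $x\star P$ is the cone disc on five vertices replacing $T_+$.

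The two key computations of relative simplicial volume are the following. The combinatorial heart is the observation that $T_+$ is the join $S^0\star P_4$, with $S^0=\{C,D\}$ and $P_4$ the path $A-x-y-B$: indeed $C$ and $D$ are each adjacent in $T_+$ to all of $A,B,x,y$ and not to each other, and the full subcomplex of $T_+$ on $\{A,x,y,B\}$ is exactly that path. Hence, by \Cref{rmk:join_product}, $\Dav{T_+}=\Dav{S^0}\times\Dav{P_4}=S^1\times\Dav{P_4}$, a product of a circle with a compact surface-with-boundary; a compact manifold with a free $S^1$-factor has vanishing relative simplicial volume, so $\SV{\Dav{T_+},\Bd\Dav{T_+}}=0$. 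Likewise $x\star P$ is flag with virtually abelian Coxeter group, so $\Dav{x\star P}$ has amenable fundamental group and $\SV{\Dav{x\star P},\Bd\Dav{x\star P}}=0$ as well. Since $P$ is a full subcomplex of $T$ and of $T'$, each copy of the torus $\Dav P\cong T^2$ is $\pi_1$-injective in $\Dav T$ and in $\Dav{T'}$ (subcomplex discussion of \Cref{sec:davis_construction}); cutting $\Dav T$ along these tori produces $2^{|V(T_+)|-|V(P)|}=4$ copies of $\Dav{T_-}$ together with copies of $\Dav{T_+}$, while cutting $\Dav{T'}$ produces $2^{|V(x\star P)|-|V(P)|}=2$ copies of $\Dav{T_-}$ together with copies of $\Dav{x\star P}$. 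By Gromov's additivity of simplicial volume for gluings along $\pi_1$-injective submanifolds with amenable fundamental group \cite{Gromov82,Frigerio2017}, the $\Dav{T_+}$- and $(x\star P)$-pieces contribute nothing, so $\SV{\Dav T}=4\,\SV{\Dav{T_-},\Bd\Dav{T_-}}$ and $\SV{\Dav{T'}}=2\,\SV{\Dav{T_-},\Bd\Dav{T_-}}$, whence $\SV{\Dav T}=2\,\SV{\Dav{T'}}$.

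I also record that the inequality $\SV{\Dav T}\ge 2\,\SV{\Dav{T'}}$ is cheap and does not need additivity: the collapse $f\colon T\to T'$ is a degree-$1$ simplicial map, so by the degree formula established in the proof of \Cref{prop:nonzero_induced} the induced map $f_M\colon\Dav T\to\Dav{T'}$ has degree $2^{|V(T)|-|V(T')|}=2$, and $2\,\SV{\Dav{T'}}\le\SV{\Dav T}$ follows from the standard degree estimate. The genuinely new content is thus the reverse inequality, and the main obstacle I expect is twofold: (i) verifying carefully that cutting the Davis complexes along the tori $\Dav P$ yields exactly the claimed collection of pieces — this is where flagness of $T$ and the absence of cross-edges are used, to rule out simplices straddling the two sides — and (ii) invoking the additivity of simplicial volume in precisely the form needed, together with the vanishing of the relative simplicial volume of a product with a circle. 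Everything else is bookkeeping with the Davis construction.
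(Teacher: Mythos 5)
Your proof is correct and follows essentially the same route as the paper: you cut $\Dav{T}$ and $\Dav{T'}$ along the $\pi_1$-injective tori lying over the square $P$, identify the resulting pieces (four versus two copies of $\Dav{T_-}$, which is the paper's $\Dav{S}$, plus pieces with an $S^1$-factor coming from $T_+$ and $x\star P$, the paper's $R$ and $R'$), and conclude by Gromov's additivity theorem exactly as in the paper. The only difference is cosmetic: you spell out the join structure $T_+\cong S^0\star P_4$ and add the (unneeded) easy inequality $\SV{\Dav{T}}\ge 2\SV{\Dav{T'}}$ via the degree-$2$ induced map.
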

	\begin{proof}
		Let $R$ be the subcomplex of $T$ in \Cref{fig:s2bigp} that has to be replaced in order to obtain $T'$, and let $S$ be the subcomplex of $T$ obtained by removing the interior of $R$.
		Let $k$ be the number of vertices of $S$, among which there are also the $4$ vertices of the square ``cutting locus'' of the reduction.
		The cutting locus, which is a square in $T$, corresponds to $2^{k-2}$ disjoint embedded tori in $\Dav{T}$, that cut $\Dav{T}$ into:
		\begin{itemize}
			\item $4$ embedded copies of $\Dav{S}$, the latter being a compact $3$-manifold with boundary;
			\item $2^{k-4}$ embedded copies of $\Dav{R}$, each of which is homeomorphic to the product of $S^1$ with an oriented connected surface with genus $1$ and $4$ boundary components.
		\end{itemize}
		Each of the $2^{k-2}$ tori is $\pi_1$-injective in the two pieces of which it is a boundary component.
		Let $R'$ be the subcomplex of $T'$ that replaces $R$.
		As above, we can cut $\Dav{T'}$ along tori and obtain:
		\begin{itemize}
			\item $2$ embedded copies of $\Dav{S}$;
			\item $2^{k-4}$ embedded copies of $\Dav{R'}$, each of which is homeomorphic to the annulus $S^1 \times [-1,1]$.
		\end{itemize}
		The conclusion follows from Gromov's additivity theorem for simplicial volume (see, \eg, \cite[Theorem 7.6]{Frigerio2017}), since the only pieces contributing to the simplicial volume are those homeomorphic to $\Dav{S}$ (the others have an $S^1$ factor).
	\end{proof}
	
	\begin{proposition}\label{prop:reduction_t9}
		Let $T$ be a flag triangulation of $S^2$ such that every edge has at least an endpoint whose valence is at least $5$ (equivalently, $T$ is a flag triangulation on which no elementary reduction can be performed, and $T$ is not the octahedral triangulation).
		Then there is a simplicial map of degree $1$ from $T$ to $T_9$. 
	\end{proposition}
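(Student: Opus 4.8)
The plan is to produce a degree-$1$ simplicial map $T\to T_9$ directly, and I would begin by translating the hypothesis into a statement about valences. The condition ``every edge of $T$ has an endpoint of valence $\ge 5$'' says exactly that the set $W$ of valence-$4$ vertices is independent in the $1$-skeleton, and this independence is precisely what blocks all elementary reductions, with the octahedron as the sole exception: if $\{x,y\}$ is an edge whose endpoints both have valence $4$, then --- using that a flag triangulation of $S^2$ has minimal valence $4$, that opposite vertices in the link $4$-cycle of a valence-$4$ vertex are non-adjacent, and that a flag complex is recovered from its $1$-skeleton --- the six vertices comprising $x$, $y$, their two common neighbours and their two remaining neighbours span a full subcomplex whose $1$-skeleton is $K_{2,2,2}$, so that subcomplex is the octahedron; being already a triangulation of $S^2$, it must be all of $T$. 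Thus, outside the octahedral case, the hypothesis of the proposition amounts to ``no elementary reduction is available'', and it only remains to build the map. Note that it suffices to build one of degree $\pm1$, since $T_9$ admits an orientation-reversing simplicial automorphism (the reflection of its underlying triangular prism), which can be post-composed to correct the sign.

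For the construction I would induct on the number of vertices of $T$. The base of the induction is $T_9$ itself, mapped by the identity, together with a short list of small triangulations checked by hand. For the inductive step, the key observation is that collapsing \emph{any} edge $\{x,y\}$ of a flag triangulation of $S^2$ produces a triangulation $T'$ of $S^2$ --- the link condition recalled in \Cref{sec:pre} holds automatically for flag complexes --- together with the simplicial map $g\colon T\to T'$ that sends $y$ to $x$ and fixes the other vertices, and $g$ has degree $\pm1$ because every $2$-simplex of $T'$ avoiding the image vertex has exactly one preimage $2$-simplex, on which $g$ restricts to a homeomorphism. Moreover, if $\{x,y\}$ lies in no square, then $T'$ is again flag. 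So if one can choose $\{x,y\}$ so that $T'$ is flag and again satisfies the hypothesis of the proposition (in particular $T'$ is not the octahedron, a case which in fact does not occur here), then the inductive hypothesis supplies a degree-$\pm1$ map $T'\to T_9$, and composing it with $g$ completes the step.

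The crux --- and the step I expect to be the main obstacle --- is the combinatorial argument that such an edge $\{x,y\}$ exists whenever $T$ is not a base case, and that its collapse can be arranged to preserve both flagness and the independence of $W$. These conditions pull in opposite directions: collapsing $\{x,y\}$ merges $x$ and $y$ into a single vertex of valence $\mathrm{val}(x)+\mathrm{val}(y)-4\ge 5$, harmless in itself, but it drops by one the valence of each of the two common neighbours of $x$ and $y$, so it may create a fresh edge between two valence-$4$ vertices; one must also avoid introducing a new square. I would hunt for a good edge inside the neighbourhood of a vertex of minimal valence, or of a valence-$4$ vertex (all of whose neighbours then have valence $\ge5$), tracking how the discrete curvature $6-\mathrm{val}$ is redistributed, and, if the hypothesis is temporarily violated, restoring it by a bounded number of further collapses before appealing to the inductive hypothesis. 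An alternative, more global, route is to realise $T_9$ as an explicit ``folding quotient'' of $T$: choose two disjoint triangles of $T$ to serve as the preimages of the two polar triangles of the prism $T_9$, and map the complementary annulus in $T$ onto the equatorial annulus of $T_9$; this avoids propagating the hypothesis along a sequence of reductions, at the cost of a more delicate direct analysis of how $T$ is arranged around the two chosen triangles. In every variant, the final passage from a degree-$\pm1$ map to the required degree-$1$ map is immediate.
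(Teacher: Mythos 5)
Your proposal is an outline rather than a proof: the step you yourself flag as ``the crux'' --- producing, for every non-basic $T$, an edge whose collapse keeps the triangulation flag and keeps valence-$4$ vertices pairwise non-adjacent (or repairing the damage ``by a bounded number of further collapses'') --- is exactly the hard content of the proposition, and it is not supplied. Worse, the single-move strategy of edge collapses cannot succeed in general. An edge contained in a square cannot be collapsed without destroying flagness, so when $T$ contains squares your induction has no admissible move along that edge; and the paper's argument shows that the square-containing cases genuinely require different tools. In the paper's proof (a minimal-counterexample argument in three cases), squares that are not vertex links are handled not by a collapse at all but by a surgery: one normalizes the square ``equator'' so that each of its vertices has at least two neighbours in the northern hemisphere, and then replaces the entire southern hemisphere by the small disc of \Cref{fig:s2smallp}, which still gives a degree-$1$ map $T\to T'$ and a flag $T'$ satisfying the hypothesis. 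In the remaining case, where every square is a vertex link, the paper shows that if every collapse of an edge emanating from the apex $V$ creates adjacent valence-$4$ vertices, then the combinatorics around $V$ are forced into the configuration of \Cref{fig:redt9_2}, which is $T_9$ minus one vertex, and the degree-$1$ map to $T_9$ is then written down directly (sending the remaining vertices to the centre of the back square or to $f(A)$). Your proposal contains neither the hemisphere-replacement move nor any substitute for this terminal direct construction; the alternative ``folding quotient'' you mention (choosing two triangles of $T$ as preimages of two triangles of $T_9$) is likewise only a plan, and it is unclear how the hypothesis on valences would force such a folding to exist.

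Two smaller points. Your reduction of ``degree $1$'' to ``degree $\pm 1$'' via an orientation-reversing automorphism of $T_9$ is unnecessary (for maps between spheres with unspecified orientations the sign is a matter of choosing orientations, and the paper's constructions give degree $1$ directly), but it is not where the difficulty lies. And your base case, ``$T_9$ together with a short list of small triangulations checked by hand,'' is left unspecified: one would need to verify that no flag triangulation of $S^2$ with fewer than $9$ vertices satisfies the hypothesis (true, but it requires an argument), or else say explicitly which small cases are checked and how. As it stands, the proposal correctly identifies the shape of the problem and the useful facts about collapses of edges not in squares, but the combinatorial heart of the proposition --- precisely what the paper's Cases 2 and 3 accomplish --- is missing.
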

	\begin{proof}
		Let us suppose by contradiction that there is a triangulation $T$ satisfying the assumption in the statement, but that does not admit simplicial maps to $T_9$ of degree $1$.
		We take $T$ to be a counterexample with the smallest number of vertices.
		
		\paragraph{Case 1:}\label{case1} $T$ does not contain squares.
		
		Take an arbitrary $1$-simplex and collapse it, obtaining a new triangulation $T'$ to which $T$ reduces via a degree-$1$ simplicial map (see \Cref{fig:collapse_edge}).
		The new triangulation $T'$ is flag, because the collapsed edge is not contained in a square.
		The only vertices in $T'$ that could have valence $4$ are those labelled by $X$ and $Y$ in \Cref{fig:collapse_edge}, because all vertices in $T$ have valence at least $5$.
		Moreover, $X$ and $Y$ are not adjacent in $T'$: if they were, then $T'$ would not be flag.
		This contradicts the minimality of $T$.
		\begin{figure}[ht]
			\centering
			\includegraphics[scale=1]{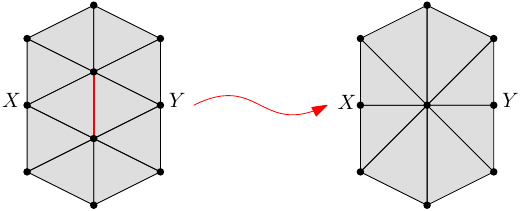}
			\caption{The collapse of an edge.}
			\label{fig:collapse_edge}
		\end{figure}
		
		\paragraph{Case 2:} $T$ contains a square which is not a vertex link.
		
		Fix one such square, that we call equator.
		It divides the sphere in a southern and a northern hemisphere (see \Cref{fig:equator_s2}).
		Each of the two hemispheres has at least $2$ internal vertices, otherwise the equator would be a vertex link.
		If a vertex $X$ of the equator has only one adjacent vertex $Y$ in the northern hemisphere, we can consider another equator, replacing $X$ with $Y$ (see \Cref{fig:equator_move}).
		This new equator is not a vertex link, because on the southern hemisphere there are at least $3$ vertices, and if on the northern side there were only one vertex $Z$, then $Y$ and $Z$ would be two adjacent vertices of valence $4$.
		After performing this change of equator a finite number of times, we can assume that each vertex in the equator is adjacent to at least $2$ vertices in the northern hemisphere.
		Now, consider the triangulation $T'$ obtained by replacing the southern hemisphere with a complex as in \Cref{fig:s2smallp}.
		There is a natural simplicial map of degree $1$ from $T$ to $T'$, so there cannot be a simplicial map from $T'$ to $T_9$ of degree $1$.
		Since every vertex of the equator has valence at least $5$ in $T$, there is no pair of adjacent vertices of valence $4$ in $T'$ (since this was true in $T$).
		Moreover, $T'$ is flag: any set of pairwise-adjacent vertices must be contained in one of the two hemispheres, and both hemispheres are flag.
		This contradicts the minimality of $T$.
		\begin{figure}[ht]
			\centering
			\begin{minipage}{.4\textwidth}
				\centering
				\includegraphics[scale=1]{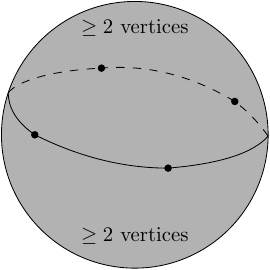}
				\caption{The equator and the two hemispheres.}
				\label{fig:equator_s2}
			\end{minipage}%
			\hspace{.1\textwidth}
			\begin{minipage}{.4\textwidth}
				\centering
				\includegraphics[scale=1]{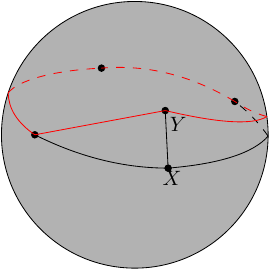}
				\caption{The new equator.}
				\label{fig:equator_move}
			\end{minipage}
		\end{figure}
		
		\paragraph{Case 3:} $T$ contains some squares, but all of them are vertex links.
		
		Fix arbitrarily one of these squares, and call $V$ the vertex of which it is the link.
		If we could collapse one of the edges emanating from $V$ and obtain another triangulation satisfying the assumptions in the statement, we would contradict the minimality of $T$.
		The result of such collapses is always flag (see \Cref{fig:flag_assured}), so these collapses must produce adjacent vertices of valence $4$.
		\begin{figure}[ht]
			\centering
			\includegraphics[scale=1]{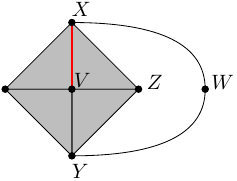}
			\caption{Suppose that collapsing the edge $VX$ we get a nonflag triangulation.
			This implies that $X$ and $Y$ are both adjacent to some other vertex $W$.
			The square $VXWY$ must be a vertex link, so $W$ is adjacent to $Z$ (or to the opposite vertex, leading to an analogous situation).
			But now $Z$ and $V$ are adjacent and of valence $4$, which is a contradiction.}
			\label{fig:flag_assured}
		\end{figure}
		In order for this to happen, one of the four vertices of the square (let us call it $X$) must have valence $5$, and at least one of its two neighbours which are not adjacent to $V$ must have valence $4$ (see \Cref{fig:redt9_1}).
		\begin{figure}[ht]
			\centering
			\includegraphics[scale=1]{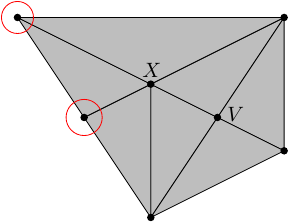}
			\caption{One of the two circled vertices must have valence $4$.
			Because of flagness, and the ``squares are vertex links'' and ``valence-$4$ vertices are not adjacent'' conditions, the vertices drawn up to this point are all distinct, and two of them are adjacent exactly when they are so in this picture.}
			\label{fig:redt9_1}
		\end{figure}
		The current situation is summarized in \Cref{fig:redt9_2}, which starts to look very similar to the picture of $T_9$ in \Cref{fig:T9}, of which only the centre of the back square is missing.
		\begin{figure}[ht]
			\centering
			\includegraphics[scale=1]{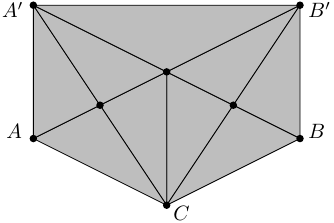}
			\caption{The vertices drawn here are all distinct, but there might be an additional edge between $A$ and $B$.
			Not between $A$ and $B'$, as this would create a square link around $A'$, which cannot have valence $4$ as it is adjacent to another vertex of valence $4$.
			For a similar reason $A'$ and $B$ are not adjacent.}
			\label{fig:redt9_2}
		\end{figure}
		We consider the simplicial map $f:T \to T_9$ defined as follows:
		\begin{itemize}
			\item The vertices appearing in \Cref{fig:redt9_2} are sent to the corresponding vertices of $T_9$ as suggested by superimposing \Cref{fig:redt9_2} and \Cref{fig:T9};
			\item Among the other vertices, those not adjacent to $C$ are sent to the centre of the back square in \Cref{fig:T9};
			\item The remaining vertices are sent to $f(A)$ (here, $f(B)$ would work equally well).
		\end{itemize}
		This actually gives a simplicial map (the image of adjacent vertices are either equal or adjacent), the only nontrivial check being that vertices of the last type are not adjacent to $B'$.
		But this is true, for the same argument used in the caption of \Cref{fig:redt9_2} to show that $A$ and $B'$ are not adjacent.
		The map $f$ is clearly of degree $1$, and this concludes the proof.
	\end{proof}
	
	\begin{proof}[Proof of the implication \ref{it:2_pos} $\implies$ \ref{it:2_t9} of \Cref{thm:2dim}]
		Suppose first that $T$ is flag.
		Up to performing a sequence of elementary reductions, which preserve the positivity of simplicial volume by \Cref{lemma:elementary_red}, we may suppose that either $T$ is the octahedral triangulation or it satisfies the assumption of \Cref{prop:reduction_t9}.
		In the first case $\SV{\Dav{T}} = 0$, because the cubical complex corresponding to the octahedral triangulation is homeomorphic to the three-dimensional torus, whose simplicial volume is $0$.
		In the second case we conclude by \Cref{prop:reduction_t9}.
		
		We now remove the assumption of flagness, and proceed by induction on the number of vertices of $T$.
		For the base case we consider the triangulation of $S^2$ with $4$ vertices, as the boundary of a tetrahedron.
		The cubical complex associated to it is homeomorphic to $S^3$, and its simplicial volume vanishes.
		
		For the induction step, if a given triangulation $T$ is flag, then we conclude as in the first part of the proof.
		Otherwise, it must contain an ``empty triangle'', \ie, a subcomplex isomorphic to the boundary of a $2$-simplex, but which is not the boundary of a $2$-simplex of $T$.
		Fix such an empty triangle $E$ (standing for ``equator''), and consider the two hemispheres $H_1,H_2$ it cuts $T$ into.
		Replacing $H_1$ with the interior of a $2$-simplex, we get a triangulation $T_1$ having fewer vertices than $T$.
		Analogously, we get a triangulation $T_2$ by replacing $H_2$ (while keeping $H_1$ intact).
		Now, $\Dav{T}$ is obtained by taking copies of $\Dav{T_1}$ and $\Dav{T_2}$, removing from them some $3$-balls, and then gluing the resulting spherical boundary components in an appropriate way.
		By Gromov's additivity theorem, if $\SV{\Dav{T}} > 0$ then at least one of $\SV{\Dav{T_1}}$ and $\SV{\Dav{T_2}}$ must be positive.
		By the induction hypothesis, either $T_1$ or $T_2$ must dominate $T_9$ with a degree-one map.
		Since $T$ dominates both $T_1$ and $T_2$ with a degree-one map, the conclusion follows.
	\end{proof}

	\section{Davis' manifolds of dimension 4}\label{sec:4dim}
	With the four-dimensional manifolds arising from the Davis' construction, we can finally start to test \Cref{conj:sv_chi}.
	
	From \Cref{lemma:chi_formula} we know that, if $T$ is a flag triangulation of $S^3$, its Euler characteristic is nonzero if and only if the quantity
	\[ \gamma_2(T) = f_3(T) - 2f_2(T) + 4f_1(T) - 8f_0(T) + 16 \]
	is nonzero, where $f_i(T)$ denotes the number of $i$-dimensional simplices in $T$.
	Using the relations $2f_3(T) = f_2(T)$ and $f_3(T) - f_2(T) + f_1(T) - f_0(T) = 0$, we can rewrite
	\[ \gamma_2(T) = 16 - 5f_0(T) + f_1(T). \]
	The notation we are using is standard in the literature; the number $\gamma_2$ is in fact a component of a vector of numbers, the $\gamma$-vector, associated to any triangulation of a sphere (of any dimension), or more generally to any \emph{Eulerian complex}.
	We refer the reader to \cite{Atha2016} for a survey on the $\gamma$-vector and its relevance in several geometric and combinatorial contexts.
	
	It is known that $\gamma_2(T) \ge 0$ for every flag triangulation $T$ of $S^3$; in this sense, the ones for which $\Dav{T}$ has vanishing Euler characteristic can be considered as the extremal ones.
	This fact was proved by Davis and Okun \cite{DO2001}, and their proof requires the machinery of $\ell^2$-homology.
	The use of $\ell^2$-homology shouldn't surprise the reader, because $\ell^2$-homology, like ordinary homology, can be used to compute Euler characteristics, and the condition $\gamma_2(T) \ge 0$ is equivalent to $\chi(M(T)) \ge 0$.
	Nevertheless, the fact that they have to resort to such machinery, despite the statement being quite elementary, indicates that this result is deeper than it might appear at a first glance.
	The theorem of Davis and Okun is a particular case of the Euler characteristic conjecture asserting that, if $M$ is a closed aspherical manifold of dimension $2k$, then its Euler characteristic satisfies $(-1)^k\chi(M) \ge 0$.
	It is also a particular case of a major open question in geometric combinatorics, asking whether \emph{the whole} $\gamma$-vector of flag spheres is nonnegative.
	
	\begin{definition}\label{def:minimal}
		Let $T$ be a flag triangulation of $S^3$ with $\gamma_2(T) > 0$.
		We say that $T$ is \emph{\minit{}} if it does not dominate any other triangulation with the same properties (being flag and having positive $\gamma_2$).
	\end{definition}
	
	We attempt the following two-steps program to check \Cref{conj:sv_chi} for four\nobreakdash{-}\hspace{0pt}dimensional manifolds obtained via the Davis' construction:
	\begin{itemize}
		\item Find the \minit{} triangulations of $S^3$;
		\item Check if the simplicial volumes of the manifolds associated to the \minit{} triangulations are positive.
	\end{itemize}
	
	Let us start with some simple observations.
	\begin{remark}
		If $T$ is a flag triangulation of $S^3$ and $v$ is a vertex of $T$, the link of $v$ is a full subcomplex of $T$, and, therefore, it is flag.
		Hence, vertex links (and, in general, full subcomplexes homeomorphic to $S^2$) are flag triangulations of $S^2$.
	\end{remark}
	\begin{remark}
		By induction, it is readily seen that for every $n \ge 0$ there is exactly one (up to isomorphism of simplicial complexes) flag triangulation of $S^n$ with $2n+2$ vertices: the $(n+1)$-fold join of the (unique) triangulation of $S^0$, which is called the octahedral $n$-sphere.
		It is the flag triangulation of $S^n$ with the lowest number of vertices.
		For $n = 1$ it is a square, for $n = 2$ it is an octahedron (\Cref{fig:octa}).
		For $n = 3$ it has $8$ vertices, and it is the only flag triangulation of $S^3$ whose vertex links are all octahedra.
	\end{remark}
	\begin{remark}
		Recall that the suspension of a simplicial complex $S$ is defined as the join $S \star S^0$.
		If $T$ is the suspension of a (flag) triangulation of $S$ of $S^2$, then $T$ is a (flag) triangulation of $S^3$ and an easy computation shows that $\gamma_2(T) = 0$.
		Notice also that $\Dav{T} = \Dav{S \star S^0} = \Dav{S} \times \Dav{S^0} = \Dav{S} \times S^1$ has vanishing simplicial volume.
	\end{remark}
	\begin{remark}\label{rem:9vert}
		If $T$ is a flag triangulation of $S^3$ with $9$ vertices, one of its vertex links must have $7$ vertices (not being isomorphic to an octahedron), so $T$ is a suspension and $\gamma_2(T) = 0$.
	\end{remark}
	
	\subsection{The join of two pentagons}
	Hereafter, we denote by $T_{10}$ the join of two pentagons.
	It is a flag triangulation of $S^3$ with $10$ vertices and $35$ edges.
	In particular, $\gamma_2(T_{10}) = 16 - 5\cdot 10 + 35 = 1 > 0$.
	By the remarks above, it has the lowest possible number of vertices for a flag triangulation of $S^3$ with nonvanishing $\gamma_2$, so it is \minit{}.
	
	Recall that the join operation translates into a direct product at the level of cubical complexes, and that a pentagon gives a surface admitting hyperbolic metrics, which has positive simplicial volume.
	This implies that $\Dav{T_{10}}$ has positive simplicial volume, being the product of two surfaces with positive simplicial volume.
	
	The following lemma gives a simple sufficient condition for a triangulation of $S^3$ to dominate $T_{10}$.
	\begin{lemma}\label{lemma:edgelink5}
		Let $T$ be a flag triangulation of $S^3$ and suppose that it has an edge that is not contained in any square and whose link (which is a triangulation of $S^1$) is not a square.
		Then $T \gdom T_{10}$.
	\end{lemma}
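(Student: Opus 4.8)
The plan is to build, by hand, a simplicial map $g\colon T\to T_{10}$ of degree $\pm 1$; this means $T\gdom T_{10}$ (in the sense of \Cref{def:dominance}). Write $e=\{x,y\}$ and $L=\Link{e}{T}$. Since $T$ is a flag triangulation of $S^3$, $L$ is a flag triangulation of $S^1$, hence a cycle, and the hypothesis that $L$ is not a square forces $L$ to have $n\ge 5$ vertices. I would partition the vertex set as $V(T)=\{x\}\sqcup\{y\}\sqcup V(L)\sqcup A\sqcup B\sqcup C$, where $A$ (respectively $B$) is the set of vertices of $T$ adjacent to $x$ but not to $y$ (respectively to $y$ but not to $x$), and $C$ is the set of vertices adjacent to neither; note that, since $T$ is flag, $V(L)$ is precisely the set of vertices adjacent to both $x$ and $y$. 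The key consequence of the assumption that $e$ lies in no square is that $T$ contains no edge between a vertex $a\in A$ and a vertex $b\in B$: such an edge $\{a,b\}$ would turn the full subcomplex of $T$ on $\{x,a,b,y\}$ into a $4$-cycle through $e$ (the only edges present are $xa$, $ab$, $by$, $yx$), that is, into a square containing $e$.

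Next I would realize $T_{10}$ as the join $P'\star P$ of two pentagons, write the cyclic vertex sequence of $P'$ as $x_1,x_2,x_3,x_4,x_5$, and fix a degree-one simplicial map $\phi\colon L\to P$; such a $\phi$ exists because $L$ is a cycle with at least five vertices (for instance, collapse $n-5$ edges of the cycle $L$ one at a time). Define $g$ by $g(x)=x_1$, $g(y)=x_2$, $g(a)=x_5$ for $a\in A$, $g(b)=x_3$ for $b\in B$, $g(c)=x_4$ for $c\in C$, and $g(v)=\phi(v)$ for $v\in V(L)$. Since $T_{10}$ is flag, to verify that $g$ is simplicial it suffices to check that every edge of $T$ is sent to an edge or a vertex of $T_{10}$, which is a short case analysis on the endpoints: one uses that any vertex of $P'$ is adjacent in $P'\star P$ to any vertex of $P$, that $x_1x_2,x_2x_3,x_3x_4,x_4x_5,x_5x_1$ are exactly the edges of $P'$, that $V(L)$ is a full subcomplex of $T$ (so an edge of $T$ inside $V(L)$ is an edge of $L$, on which $\phi$ is simplicial), and—for the single delicate pair, an $A$–$B$ edge—the fact, just established, that none exists.

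It then remains to compute $\deg(g)$. The crucial feature is that $g^{-1}(x_1)=\{x\}$ and $g^{-1}(x_2)=\{y\}$. Hence, for a top-dimensional simplex $\Delta_0=\{x_1,x_2,y_i,y_{i+1}\}$ of $T_{10}$ (with $\{y_i,y_{i+1}\}$ an edge of $P$), every $3$-simplex of $T$ mapped onto $\Delta_0$ by $g$ must contain both $x$ and $y$, hence has the form $\{x,y,v,v'\}$ with $\{v,v'\}$ an edge of $L$ satisfying $\phi(\{v,v'\})=\{y_i,y_{i+1}\}$. In particular all such simplices lie in the closed star of $e$—which, in any simplicial complex, is the join of the $1$-simplex $e$ with $L$, hence a $3$-ball—and on this $3$-ball $g$ coincides with the join of the bijection $\{x,y\}\to\{x_1,x_2\}$ with $\phi$, whereas $g$ carries the complementary $3$-ball (the subcomplex of those simplices of $T$ not containing $e$) into the complementary $3$-ball of $T_{10}$. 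Either a direct signed count of the preimages of $\Delta_0$, or the remark that $g$ restricts to a degree-$\pm 1$ map between the closed star of $e$ in $T$ and the closed star of the edge $\{x_1,x_2\}$ in $T_{10}$—which, via the boundary isomorphism $H_3(D^3,S^2)\cong H_2(S^2)$, reduces to the fact that the suspension $\Sigma\phi$ has degree $\pm 1$—gives $\deg(g)=\pm 1\neq 0$, as required.

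The only genuinely delicate step is the orientation bookkeeping in this last degree computation; the partition of $V(T)$, the no-square consequence, and the simpliciality check are all elementary verifications. I expect the cleanest exposition to run the degree argument through the two complementary $3$-balls of $T_{10}$ rather than through an explicit simplex count.
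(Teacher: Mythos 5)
Your construction is exactly the map the paper encodes in \Cref{fig:edgelink5}: the edge's endpoints, the three classes of vertices (adjacent only to $x$, only to $y$, or to neither) and the link $L$ are sent, respectively, to the five vertices of one pentagon and onto the other pentagon, with the ``no square through $e$'' hypothesis ruling out the one problematic adjacency and the link having $\ge 5$ vertices making the map onto the second pentagon possible. So the proposal is correct and follows essentially the same approach as the paper, merely spelling out in prose (including the degree-$\pm 1$ verification via the closed star of $e$) what the paper delegates to the schematic figure.
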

	\begin{proof}
		A simplicial map (of degree $\pm 1$) can be defined as described in \Cref{fig:edgelink5}, where the vertices labelled with $A$ and $B$ are the endpoints of an edge satisfying the assumptions in the statement.
		\begin{figure}
			\centering
			\includegraphics[scale=1]{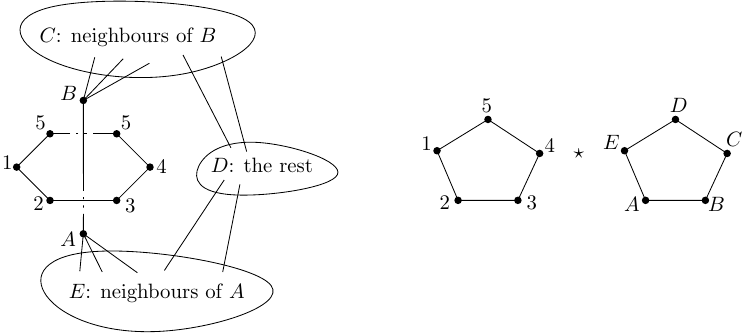}
			\caption{Schematic description of a simplicial map from $T$ (on the left) to the join of two pentagons (on the right).
			Each vertex is sent to the one labelled with the same symbol.}
			\label{fig:edgelink5}
		\end{figure}
	\end{proof}
	
	\begin{example}
		Let $T$ be the barycentric subdivision of a simplicial complex homeomorphic to $S^3$.
		Then $T \gdom T_{10}$, because any edge joining the barycentre of a $3$-simplex to the barycentre of a $2$-simplex satisfies the assumption in \Cref{lemma:edgelink5}.
	\end{example}
	
	\begin{example}
		Let $T$ be a flag triangulation of $S^3$ such that $\Dav{T}$ has hyperbolic fundamental group (this implies positivity of simplicial volume).
		Then $T \gdom T_{10}$, again by \Cref{lemma:edgelink5}, because $T$ does not contain squares: a square would induce a subgroup of $\pi_1(\Dav{T})$ isomorphic to $\Z^2$, but hyperbolic groups cannot have subgroups isomorphic to $\Z^2$.
	\end{example}
	
	\begin{question}
		Let $T$ be a flag triangulation of $S^3$ and suppose that $\pi_1(\Dav{T})$ is relatively hyperbolic.
		Is it true that $T \gdom T_{10}$?
		This is not unreasonable since relative hyperbolicity imposes restrictions on the collection of squares (see \cite{BHS17}).
	\end{question}
	
	Let us consider a special class of triangulations.
	\begin{definition}\label{def:incremental}
		Let $T$ be a flag triangulation of $S^3$; we say that $T$ is \emph{incremental} if it contains a sequence of nested full subcomplexes $S_1 \subset \dots \subset S_k$ such that:
		\begin{itemize}
			\item $S_1$ is the $1$-neighbourhood (the closed star) of a vertex, and $S_k = T$;
			\item For every $i < k$, the subcomplex $S_{i+1}$ has exactly one vertex more than $S_i$;
			\item For every $i < k$, the subcomplex $S_i$ is homeomorphic to the $3$-ball and its boundary is a full subcomplex (of $S_i$ or of $T$; there is no difference since $S_i$ is full in $T$).
		\end{itemize}
	\end{definition}
		Equivalently, incremental triangulations are those obtained with the following procedure:
		\begin{enumerate}
			\item Start with the cone over a flag triangulation of the $2$-sphere;
			\item Perform the following operation a finite number of times: if $S$ is the current simplicial complex (which is a flag triangulation of the $3$-ball, whose boundary is a full subcomplex), choose a full subcomplex of $\partial S$ homeomorphic to the $2$-disc and whose boundary is full, and attach a cone (introducing a new vertex as the tip of the cone) over this subcomplex;
			\item Finally, attach a cone over the boundary of the current simplicial complex.
		\end{enumerate}
		For example, the suspension of a flag triangulation of $S^2$ is incremental (with $k = 2$); also, it is not hard to see that if $T$ has a vertex which is adjacent to all but $2$ of the other vertices, then it is incremental (with $k = 3$).
	\begin{theorem}\label{thm:incremental}
		If $T$ is incremental and $\gamma_2(T) > 0$, then $T \gdom T_{10}$. 
	\end{theorem}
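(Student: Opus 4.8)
The plan is to induct on $k$, the length of the nested sequence $S_1 \subset \dots \subset S_k = T$ witnessing incrementality. The base case is $k = 1$: then $T$ is the closed star of a single vertex $v$, which means $v$ is adjacent to every other vertex. But the link of $v$ would be a full subcomplex of $T$ homeomorphic to $S^2$ containing all vertices except $v$, and it would have to be the whole boundary of the star — this forces $T$ to be a suspension (or an even more degenerate complex), giving $\gamma_2(T) = 0$, contradicting the hypothesis. So the base case is vacuous, and the real content is the inductive step.

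For the inductive step, I would look at the last vertex $w$ added, i.e. $S_{k-1} \subset S_k = T$, where $S_{k-1}$ is a flag triangulation of the $3$-ball with full boundary $\partial S_{k-1}$, and $T$ is obtained by coning off $\partial S_{k-1}$ with tip $w$ (so $w$ is adjacent to exactly the vertices of $\partial S_{k-1}$). The idea is to produce from $T$ a smaller incremental triangulation $T'$ with $T \gdom T'$ and $\gamma_2(T') > 0$, then apply the inductive hypothesis — unless we are in a boundary situation where we can directly invoke \Cref{lemma:edgelink5} to get $T \gdom T_{10}$. Concretely, I would try to collapse an edge incident to $w$, or to some low-valence vertex on $\partial S_{k-1}$, chosen so that (a) the collapsed edge is not in a square (so flagness is preserved, by the fact recalled in \Cref{sec:pre}), (b) the result is again incremental, and (c) $\gamma_2$ stays positive. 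Since a flag edge collapse gives a degree-$1$ simplicial map $T \to T'$ (as in the 2-dimensional case, and as used throughout \Cref{sec:3dim}), this yields $T \gdom T'$; then $T \gdom T' \gdom T_{10}$ by induction.

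The place where this must be complemented by \Cref{lemma:edgelink5} is when no such collapse is available — this happens precisely when every edge incident to $w$ lies in a square, or $w$ has small valence with a non-square link. In the former case, one examines the edge from $w$ to a neighbor: if it has a square link that is a vertex link one can re-route the cone, and if the relevant edge is \emph{not} in a square but has a non-square $S^1$-link, \Cref{lemma:edgelink5} applies directly to give $T \gdom T_{10}$. An important subcase is when $S_{k-1}$ has so few extra vertices that $\gamma_2(S_{k-1})$ or the intermediate complexes drop to $0$: here I would use the inequality $\gamma_2 \geq 0$ for flag $S^2$'s (Davis–Okun) applied to $\partial S_{k-1}$, together with the local combinatorics of how coning off a disc changes $f_0$ and $f_1$, to control $\gamma_2(T) = 16 - 5f_0(T) + f_1(T)$. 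The main obstacle I anticipate is organizing the case analysis so that in \emph{every} configuration either a flagness-preserving, incrementality-preserving, $\gamma_2$-positive collapse exists, or \Cref{lemma:edgelink5} fires — in other words, showing these two escape routes jointly cover all incremental $T$ with $\gamma_2(T) > 0$. Getting the bookkeeping right on how the collapse interacts with the nested sequence $S_1 \subset \dots \subset S_{k-1}$ (one may need to collapse an edge living deep inside the filtration, not just near $w$, and re-derive the filtration for $T'$) is the delicate point.
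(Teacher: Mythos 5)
There is a genuine gap: your inductive step is a plan, not an argument. The whole content of the theorem is hidden in the claim that ``in every configuration either a flagness-preserving, incrementality-preserving, $\gamma_2$-positive collapse exists, or \Cref{lemma:edgelink5} fires,'' and you do not prove it --- you explicitly list it as the anticipated obstacle. None of the three requirements you impose on the collapse is verified: an edge incident to the last cone tip $w$ may well lie in a square (indeed this is typical when $\gamma_2$ is small), a collapse that preserves flagness need not preserve incrementality (the filtration $S_1\subset\dots\subset S_{k-1}$ can be destroyed, and you give no way to rebuild it), and there is no control guaranteeing $\gamma_2(T')>0$ after the collapse. The fallback ``re-route the cone'' is likewise undefined. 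As written, the proposal reduces the theorem to an unproved case analysis that is at least as hard as the statement itself, so it cannot be accepted as a proof.

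The paper's proof avoids all of this and is worth contrasting with your route. It introduces, for a triangulation $S$ of the $3$-ball, the quantity $\theta(S)=11-5f_0(S)+f_1(S)+f_0(\partial S)$ and tracks it along the incremental construction: $\theta=0$ for the initial cone over a $2$-sphere, attaching a cone over a disc whose boundary has $m$ vertices changes $\theta$ by $m-4$, and the final coning gives $\gamma_2(T)=\theta(S)$. Hence $\gamma_2(T)>0$ forces some step in which the attached disc has boundary a cycle with at least $5$ vertices, and the proof then writes down an explicit simplicial map of degree $\pm 1$ to $T_{10}$ (in the spirit of \Cref{fig:edgelink5}): that boundary cycle is sent onto one pentagon of the join, and the remaining vertices are sent to the five vertices of the other pentagon in cyclic order according to the partition (tip of the new cone; interior of the disc; interior of the current ball; rest of the current boundary; vertices added later). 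No induction, no collapses, and no case analysis on squares are needed; the single structural identity $\gamma_2(T)=\sum (m-4)$ over the cone-attachment steps is the missing idea in your proposal, and it is what pinpoints the $\ge 5$-cycle that makes the direct map to $T_{10}$ possible.
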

	\begin{proof}		
		For $S$ a triangulation of the $3$-ball, consider the following quantity:
		\[ \theta(S) = 11 - 5f_0(S) + f_1(S) + f_0(\partial S).\]
		Then the changes of $\theta$ during the procedure described in the construction of an incremental triangulation are as follows:
		\begin{enumerate}
			\item At the beginning, when $S$ is the cone over a $2$-sphere, we have $\theta(S) = 0$;
			\item When we attach a cone to $S$ obtaining a new $S'$, we obtain $\theta(S') = \theta(S) + m - 4$, where $m$ is the number of vertices in the boundary of the disc over which the cone is attached;
			\item When we finally obtain $T$ by attaching a cone over the whole boundary of $S$, we get $\gamma_2(T) = \theta(S)$.
		\end{enumerate}
		Therefore, $\gamma_2(T)$ is positive if and only if in one of the steps the boundary of the disc has at least $5$ vertices, and if this is the case then $T$ dominates $T_{10}$ as follows (the picture is not dissimilar to \Cref{fig:edgelink5}):
		\begin{itemize}
			\item The boundary of the disc selected in the step when $\theta$ increases is mapped to the first pentagon of the join;
			\item The other vertices are sent to the vertices of the second pentagon, in cyclic order, partitioned as: the tip of the newly attached cone; the interior of the disc; the interior of the $3$-ball; the boundary of the current $S$, excluding the disc; vertices that will be added in subsequent steps.
		\end{itemize}
		Thus, if $\gamma_2(T) > 0$, then $T$ dominates $T_{10}$.
	\end{proof}
		
	\subsection{Triangulations with few vertices}
	We have already seen that flag triangulations with up to $9$ vertices are suspensions.
	We now consider flag triangulations $T$ with up to $11$ vertices.
	
	If $T$ has $10$ vertices, then it must have a vertex $X$ whose valence is at least $7$, because vertex links cannot be all octahedra.
	This leaves at most $2$ vertices outside the closed star of $X$; we have already observed that this condition implies that $T$ is incremental (\Cref{def:incremental}).
	Hence, either $\gamma_2(T) = 0$ or $T \gdom T_{10}$ by \Cref{thm:incremental}, but in the latter case it must be isomorphic to $T_{10}$.
	We conclude that $T_{10}$ is the only flag triangulation (up to isomorphisms) with $10$ vertices with positive $\gamma_2$.
	
	If $T$ has $11$ vertices, and one of them has valence at least $8$, we conclude as above that either $\gamma_2(T) = 0$ or $T \gdom T_{10}$.
	If all of its vertices have valence $6$ or $7$, it actually follows from the inequality $\gamma_2(T) \ge 0$ that exactly one of the vertices has valence $6$ and the others have valence $7$; this gives a total of $39$ edges and $\gamma_2(T) = 0$.
	We conclude the following:
	\begin{theorem}\label{thm:less12}
		$T_{10}$ is the only \minit{} triangulation with up to $11$ vertices.
	\end{theorem}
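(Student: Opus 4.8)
The plan is simply to assemble the facts already established in this subsection into a short bookkeeping argument. First I would record why $T_{10}$ itself is \minit{}, a point essentially already made above: every flag triangulation of $S^3$ with positive $\gamma_2$ has at least $10$ vertices, whereas a simplicial map of nonzero degree between triangulated spheres is surjective on vertex sets (a map missing a vertex has image in a proper subcomplex, hence factors through a space with vanishing top homology, so has degree $0$). Consequently, if $T_{10}$ dominated some flag triangulation $S$ with $\gamma_2(S)>0$, then $S$ would have at most $10$ vertices and at least $10$ vertices, hence exactly $10$, forcing $S\cong T_{10}$ by the uniqueness (proved just before the theorem) of the $10$-vertex flag triangulation of $S^3$ with positive $\gamma_2$. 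So $T_{10}$ dominates no genuinely different triangulation with the required properties, i.e.\ it is \minit{}.

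Next I would take an arbitrary \minit{} triangulation $T$ with $|V(T)|\le 11$ and prove $T\cong T_{10}$. By \Cref{def:minimal}, $\gamma_2(T)>0$, so $T$ is not a suspension; since every flag triangulation of $S^3$ with at most $9$ vertices is a suspension (as recalled earlier in the section), $T$ has $10$ or $11$ vertices. If $|V(T)|=10$, we conclude $T\cong T_{10}$ by the same uniqueness statement. Thus the entire content is to exclude the case $|V(T)|=11$.

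For the $11$-vertex case I would argue by a valence dichotomy. Each vertex link of $T$ is a flag triangulation of $S^2$, hence has at least $6$ vertices, so every vertex of $T$ has valence at least $6$. If every vertex had valence $6$ or $7$, the sum of the valences would be at most $77$, giving $f_1(T)\le 38$ and therefore $\gamma_2(T)=16-5f_0(T)+f_1(T)=f_1(T)-39<0$, contradicting both $\gamma_2(T)>0$ and the inequality $\gamma_2\ge 0$ for flag $3$-spheres. Hence some vertex $X$ has valence at least $8$, so at most two vertices of $T$ lie outside the closed star of $X$; this is precisely the configuration making $T$ incremental (\Cref{def:incremental}). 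Applying \Cref{thm:incremental} with $\gamma_2(T)>0$ yields $T\gdom T_{10}$, and since $T_{10}$ is flag with $\gamma_2(T_{10})>0$ and has strictly fewer vertices than $T$, this contradicts the \minit{}-ity of $T$. Therefore $|V(T)|=10$ and $T\cong T_{10}$, which is the theorem.

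I do not expect a genuine obstacle: all the substantive inputs — the classification of flag triangulations of $S^3$ with at most $10$ vertices, the fact that a vertex of large valence produces an incremental structure, \Cref{thm:incremental}, and the nonnegativity of $\gamma_2$ — are already available, so the proof is a short deduction. The only two points deserving a word of care are the surjectivity on vertices of nonzero-degree simplicial maps (used to show $T_{10}$ is \minit{}) and the elementary edge count ruling out the all-valence-$\le 7$ subcase in dimension eleven.
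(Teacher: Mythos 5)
Your proposal is correct and follows essentially the same route as the paper: triangulations with at most $9$ vertices are suspensions, the $10$-vertex case is settled by the uniqueness of the $10$-vertex flag triangulation with positive $\gamma_2$, and the $11$-vertex case splits into a vertex of valence at least $8$ (making $T$ incremental, hence dominating $T_{10}$ by \Cref{thm:incremental}) versus all valences at most $7$, which an edge count excludes. Indeed your count in that last subcase ($2f_1(T)\le 77$, so $f_1(T)\le 38$ and $\gamma_2(T)\le -1$) is cleaner than the paper's corresponding sentence, which describes a configuration with $39$ edges and $\gamma_2=0$ that the same count shows cannot occur; either way the subcase is incompatible with $\gamma_2(T)>0$, so the theorem stands.
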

	
	\subsection{A \minit{} triangulation with 12 vertices}
	In this subsection we introduce another flag triangulation.
	It has $12$ vertices, which we label with numbers from $0$ to $11$.
	We denote this triangulation by $T_{12}$; its $1$-skeleton is schematically described in \Cref{fig:T12}, and has $45$ edges.
	Recall that the whole simplicial complex can be reconstructed from its $1$-skeleton, because of flagness.
	\begin{figure}
		\centering
		\includegraphics[scale=1]{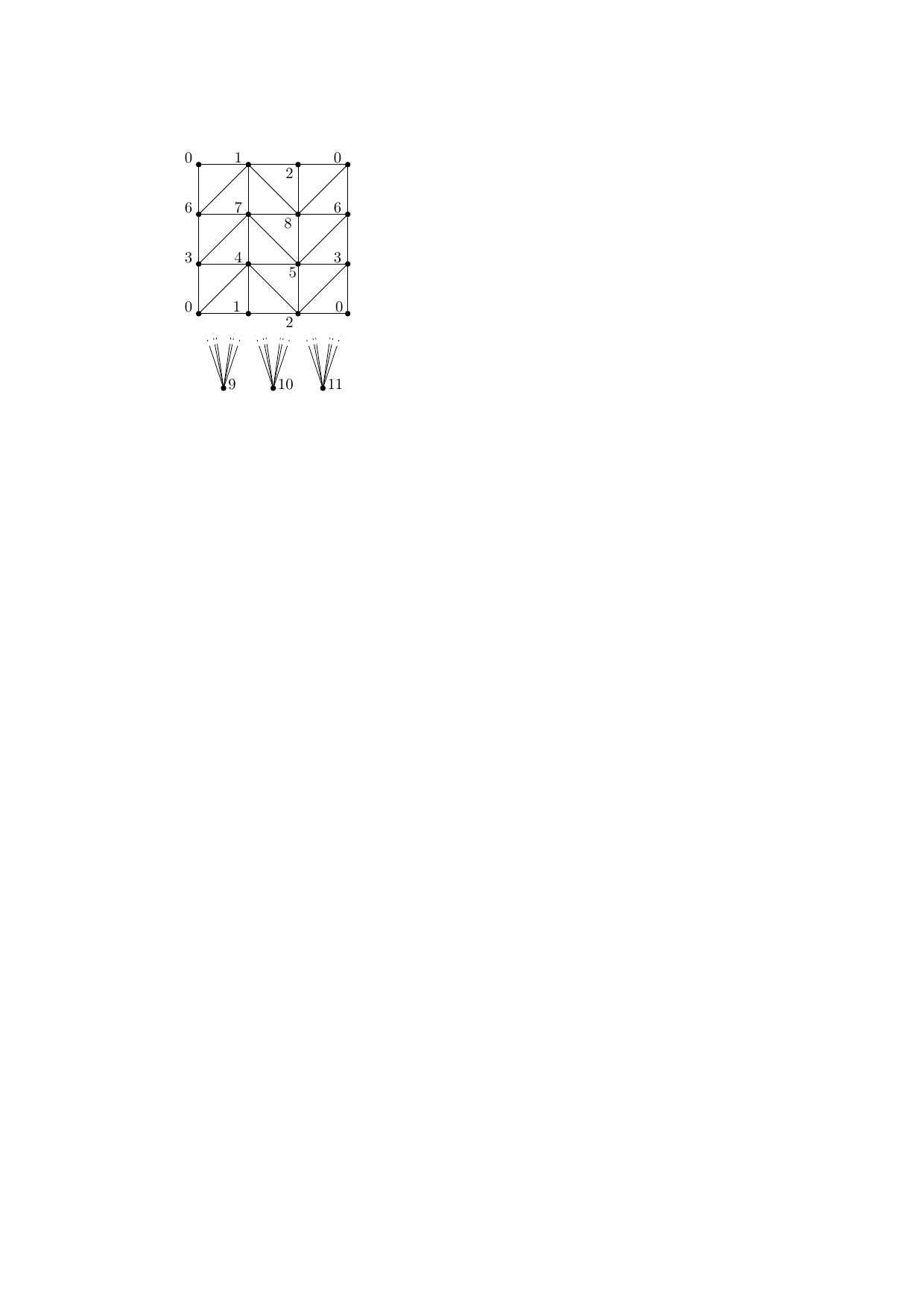}
		\caption{The $1$-skeleton of $T_{12}$. Some vertices are drawn multiple times, as suggested by the repeated labels, for convenience. Each of the three vertices labelled $9$, $10$ and $11$ is adjacent to the six vertices in the column above it.}
		\label{fig:T12}
	\end{figure}
	
	It is not so easy to get a grasp on the complex $T_{12}$.
	It has already been considered by Venturello \cite{Venturello22} as an example of a flag triangulation of $S^3$ which is not a suspension, whose full subcomplexes homeomorphic to the $2$-sphere are all vertex links, and whose edges are all contained in squares.
	In particular he proves that $T_{12}$ is a triangulation of $S^3$, a fact that isn't clear at first glance.\footnote{Venturello uses a different construction to obtain a flag triangulation of $S^3$ that he calls $\Delta_{12,33}$. It is easy to compare his construction with ours and to check that $\Delta_{12,33} \cong T_{12}$.}
	
	Notice that, from the number of vertices and edges, we get $\gamma_2(T_{12}) = 16 - 5\cdot 12 + 45 = 1 > 0$.
	We prove in \Cref{prop:T12_minimal} below that $T_{12}$ is \minit{}.

	\paragraph{Symmetries of $\bm{T_{12}}$.}
	The triangulation $T_{12}$ has some nontrivial automorphisms, induced by the following permutations of $V(T_{12})$:
	\begin{itemize}
		\item $(0 \to 2 \to 4 \to 3 \to 5 \to 7 \to 6 \to 8 \to 1 \to 0), (9 \to 10 \to 11 \to 9)$;
		\item $(3 \leftrightarrow 6), (1 \leftrightarrow 2), (7 \leftrightarrow 5), (4 \leftrightarrow 8), (9 \leftrightarrow 11)$;
		this can be pictured by performing a half-turn rotation in \Cref{fig:T12}.
	\end{itemize}
	These automorphisms identify two orbits of vertices: the ones numbered $0$ through $8$ and the ones from $9$ to $11$.
	The edges, instead, are divided into four orbits, represented for instance by $\{10,4\}$, $\{7,4\}$, $\{4,5\}$ and $\{7,5\}$.

	\begin{proposition}\label{prop:T12_minimal}
		The triangulation $T_{12}$ is \minit{}.
	\end{proposition}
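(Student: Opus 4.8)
The plan is to use two general reductions to bring the statement down to the single assertion ``$T_{12}$ does not dominate $T_{10}$'', and then to dispose of that assertion by a finite combinatorial analysis.

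\emph{Step 1: a rigidity lemma.} First I would isolate the following fact: if $f\colon T\to T'$ is a simplicial map of nonzero degree between flag triangulations of $S^3$ with $\abs{V(T)}=\abs{V(T')}$, then $f$ is a simplicial isomorphism (in particular $\abs{\deg(f)}=1$). Indeed, a nonzero-degree map between closed connected orientable manifolds is surjective, hence surjective on vertices (a vertex of $T'$ lying in the image of $\abs{T}$ must be the image of a vertex of $T$), so here $f$ restricts to a bijection $V(T)\to V(T')$. Being vertex-injective, $f$ carries each tetrahedron of $T$ homeomorphically onto a tetrahedron of $T'$, and it is injective on the set of tetrahedra (two tetrahedra with the same image would have the same vertex set). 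Surjectivity forces every tetrahedron $\tau$ of $T'$ to be the image of some tetrahedron of $T$: a generic point of the interior of $\tau$ has a preimage whose carrier is then a tetrahedron mapping onto $\tau$. Hence $f$ induces a bijection between the tetrahedra of $T$ and those of $T'$; since each of $T,T'$ is the downward closure of its set of tetrahedra (it triangulates a manifold), $f$ is a simplicial isomorphism.

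\emph{Step 2: reduction to $T_{10}$.} Suppose, for contradiction, that $T_{12}\gdom T'$ for some flag triangulation $T'$ of $S^3$ with $\gamma_2(T')>0$ and $T'\not\cong T_{12}$, via a simplicial map $g\colon T_{12}\to T'$ of nonzero degree. As above, $g$ is surjective on vertices, so $\abs{V(T')}\le 12$; on the other hand a flag triangulation of $S^3$ has at least $8$ vertices, and those with $8$ or $9$ vertices are suspensions with $\gamma_2=0$ (cf.\ \Cref{rem:9vert}), which is excluded. If $\abs{V(T')}\in\{10,11\}$, then by the analysis preceding \Cref{thm:less12} the triangulation $T'$ either is $T_{10}$ or dominates $T_{10}$; in either case $T'\gdom T_{10}$, and composing simplicial maps (degrees multiply) gives $T_{12}\gdom T_{10}$. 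If $\abs{V(T')}=12$, then $g$ is an isomorphism by Step 1, contradicting $T'\not\cong T_{12}$. Thus the proposition is equivalent to the assertion that $T_{12}$ does not dominate $T_{10}$.

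\emph{Step 3: $T_{12}\not\gdom T_{10}$ --- the main obstacle.} It remains to rule out a simplicial map $f\colon T_{12}\to T_{10}$ of nonzero degree, where $T_{10}=P\star Q$ is the join of two pentagons. Put $A=f^{-1}(V(P))$ and $B=f^{-1}(V(Q))$; for any simplex $\sigma$ of $T_{12}$ the sets $f(\sigma\cap A)$ and $f(\sigma\cap B)$ are a face of $P$ and a face of $Q$, respectively, so each has at most $2$ elements. Consequently, for every tetrahedron $\tau$ of $T_{10}$ and every tetrahedron $\sigma$ of $T_{12}$ with $f(\sigma)=\tau$ (at least one such $\sigma$ exists, again by surjectivity of $f$), one must have $\abs{\sigma\cap A}=\abs{\sigma\cap B}=2$ with $f$ injective on each half. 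The task is then to show that the $2$-colouring $V(T_{12})=A\sqcup B$ so constrained is incompatible with the explicit combinatorics of $T_{12}$: this can be approached by examining the colourings forced on the vertex links of $T_{12}$ (each a flag triangulation of $S^2$), by counting against $f_3(T_{12})=33$, and by using the automorphisms of $T_{12}$ to cut down the list of candidate partitions. I expect this to be the hard part, mainly because the $2$--$2$ splitting is only forced on the \emph{non-degenerate} tetrahedra (the map may also collapse tetrahedra to lower-dimensional faces), so a purely local argument does not suffice. Since ruling out such $f$ is ultimately a finite question --- the simplicial maps $T_{12}\to T_{10}$ correspond to a partition $A\sqcup B$ together with reflexive graph homomorphisms from $T_{12}[A]$ and $T_{12}[B]$ to the $5$-cycle, and for each one computes the degree --- it can in any case be decided by a short computer search, which is the route I would take if no clean structural obstruction presents itself.
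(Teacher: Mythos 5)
Your Steps 1 and 2 are correct, and they reproduce (with more detail) exactly the reduction the paper makes before its proof: a nonzero-degree simplicial map cannot increase the number of vertices, equality of vertex numbers forces an isomorphism, and \Cref{rem:9vert} together with the analysis behind \Cref{thm:less12} disposes of targets with at most $11$ vertices, so the proposition is equivalent to $T_{10}\not\ldom T_{12}$. Up to there, nothing to object to.

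The genuine gap is Step 3, which is the actual content of \Cref{prop:T12_minimal}: you set up the bipartition $V(T_{12})=A\sqcup B$ with $A=f^{-1}(V(P))$, $B=f^{-1}(V(Q))$ and the $2$--$2$ constraint on nondegenerate tetrahedra, but you do not derive any contradiction from it; you explicitly flag it as ``the hard part'' and fall back on ``a short computer search''. As written this is a plan, not a proof: no structural obstruction is exhibited and no computation is actually carried out (the paper itself notes that brute-force enumeration of all maps $V(T_{12})\to V(T_{10})$ is feasible on a machine, but its stated point is to give a human-checkable argument). The paper's argument, which your outline is missing, runs as follows: since $f$ is surjective on the $12$ vertices, the preimage cardinalities are either $(3,1,\dots,1)$ or $(2,2,1,\dots,1)$, giving four configurations up to symmetry; in each one there is an edge $e$ of one pentagon both of whose endpoints have singleton preimages while the opposite vertex $v$ of that pentagon has preimage of size at least $2$. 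The link of $e$ in $T_{10}$ is the other pentagon, so nonzero degree forces the edge $f^{-1}(e)$ of $T_{12}$ to have a link with at least $5$ vertices; by the automorphism analysis of $T_{12}$ this pins $f^{-1}(e)$ down to $\{4,7\}$ or $\{5,7\}$ up to $\mathrm{Aut}(T_{12})$. But any vertex mapping to $v$ must be adjacent to neither endpoint of $f^{-1}(e)$, and in $T_{12}$ exactly one vertex ($11$, resp.\ $0$) has that property, contradicting $\abs{f^{-1}(v)}\ge 2$ in Cases I--III; Case IV is excluded by a short additional adjacency argument (the vertices $6$ and $0$, resp.\ $1$ and $2$, would have to go to the two doubled vertices, which are nonadjacent in $T_{10}$ although their preimages are adjacent in $T_{12}$). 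Some argument of this kind --- or an actually executed and reported enumeration --- is needed to close your Step 3.
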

	In general, if $T' \ldom T$, then either $T'$ has less vertices than $T$, or $T'$ is isomorphic to $T$.
	Since $T_{10}$ is the only \minit{} triangulation with less than $12$ vertices (\Cref{thm:less12}), we only have to prove that $T_{10} \not\ldom T_{12}$.
	This can be verified by enumerating all the possible maps from $V(T_{12})$ to $V(T_{10})$ (which are finitely many) and checking that none of them is a simplicial map of nonzero degree.
	The amount of such maps is quite big, but with a smart enough implementation one can perform these checks in a matter of minutes on a computer.
	In any case, we provide here a compact proof which is understandable by humans.
	\begin{proof}
		
		Suppose that there is a simplicial map $f:T_{12} \to T_{10}$ with $\deg(f) \neq 0$; we will derive a contradiction from this assumption.
		
		Since $\deg(f) \neq 0$, the map $f$ is surjective.
		There must be either a vertex of $T_{10}$ whose preimage has $3$ vertices (while the preimages of other vertices contain exactly one vertex each), or two distinct vertices of $T_{10}$ whose preimages have two vertices each.
		The four possible situations are summarized in \Cref{fig:T12_T10}.
		
		\begin{figure}
			\centering
			\includegraphics[scale=1]{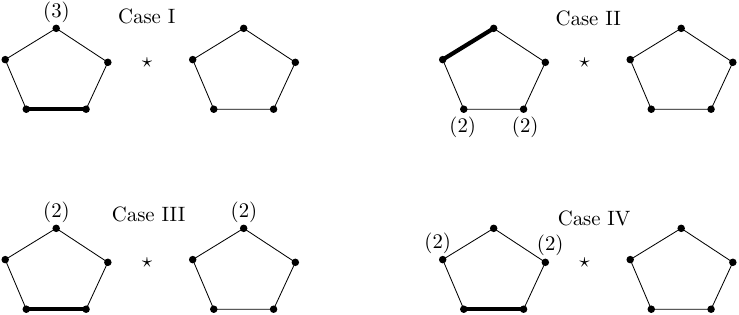}
			\caption{The four essentially distinct ways in which the cardinalities of preimages can be distributed among the vertices of $T_{10}$.}
			\label{fig:T12_T10}
		\end{figure}
		
		Cases I, II and III can be treated as follows.
		In each of these cases, there is an edge $e$ (marked with a heavier line in \Cref{fig:T12_T10}) such that:
		\begin{itemize}
			\item The edge $e$ is contained in one of the two pentagons;
			\item Its endpoints have preimages of cardinality 1;
			\item The vertex $v$ lying on the same pentagon as $e$ and opposite to it has preimage of cardinality at least 2.
		\end{itemize}
		The preimage of $e$ must be an edge in $T_{12}$.
		Notice that the link of $e$ in $T_{10}$ is a pentagon; since $\deg(f) \neq 0$, the link of $f^{-1}(e)$ must have at least $5$ vertices.
		Up to composing with an automorphism of $T_{12}$, we can assume that $f^{-1}(e) = \{4,7\}$, or $f^{-1}(e) = \{5,7\}$ because the edges in the other $\rm{Aut}(T_{12})$-orbits have links with just $4$ vertices.
		If $f^{-1}(e) = \{4,7\}$, a contradiction arises because the only vertex in $T_{12}$ which is adjacent to neither $4$ nor $7$ is the one labelled $11$, but $f$ should send more than one vertex to $v$.
		If $f^{-1}(e) = \{5,7\}$, we conclude similarly since the only vertex adjacent to neither $5$ nor $7$ is $0$.
		
		We now consider Case IV.
		Again, we can assume that the edge $e$ marked in \Cref{fig:T12_T10} has preimage $f^{-1}(e) = \{4,7\}$ or $f^{-1}(e) = \{5,7\}$.
		Suppose that $f^{-1}(e) = \{4,7\}$.
		The link of $\{4,7\}$ in $T_{12}$ contains the vertices $1, 10, 5, 3, 9$; they must be mapped by $f$ to the five vertices of the second pentagon, which is the link of $e$.
		The other vertices of $T_{12}$ must be mapped to the first pentagon.
		Consider now vertex $6$: it is adjacent to $7$, so $f(6)$ is one of the two vertices with double preimage (the one adjacent to $f(7)$).
		On the other hand, vertex $0$ is adjacent to $4$, so $f(0)$ is the \emph{other} vertex with double preimage.
		A contradiction arises because $0$ and $6$ are adjacent in $T_{12}$, but $f(0)$ and $f(6)$ are not adjacent in $T_{10}$.
		If $f^{-1}(e) = \{5,7\}$ we conclude similarly, considering the vertices $1$ and $2$ in place of $6$ and $0$.
	\end{proof}
	
	Recall that our strategy for testing \Cref{conj:sv_chi} has two steps: finding the basic triangulations and checking that they give manifolds of positive simplicial volume.
	
	\begin{question}\label{q:T12_positive}
		Does $\Dav{T_{12}}$ have positive simplicial volume?
	\end{question}
	We aren't able to answer this (and the following) question in this paper.
	Using the algorithm in \cite{BHS17} along with \Cref{prop:relhyp_coxeter}, it is routine to check that the fundamental group of $\Dav{T_{12}}$ is not relatively hyperbolic.
	We would need to find other reasons for the simplicial volume to be positive, in order to answer (positively) to \Cref{q:T12_positive}.
	\begin{question}\label{q:other_min}
		Are $T_{10}$ and $T_{12}$ the only \minit{} triangulations of $S^3$?
	\end{question}

	\subsection{The quest for other \minit{} triangulations}\label{ssec:quest}
	We start by pointing out a condition that any \minit{} triangulation distinct from $T_{10}$ must satisfy.
	Recall that a square in $T$ is a full subcomplex of $T$ homeomorphic to $S^1$ having four vertices.
	
	\begin{proposition}\label{prop:many_squares}
		Let $T$ be \minit{}, not isomorphic to $T_{10}$.
		Then every edge of $T$ is contained in a square.
	\end{proposition}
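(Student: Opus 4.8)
The plan is to argue by contradiction: assume $T$ is \minit{}, not isomorphic to $T_{10}$, yet some edge $e=\{x,y\}$ of $T$ is contained in no square. Since $e$ is not in a square, collapsing $e$ produces a new flag triangulation $T'$ of $S^3$ (one of the facts about edge collapses recalled in \Cref{sec:pre}), with $f_0(T')=f_0(T)-1$; moreover the collapse map $T\to T'$ (sending $y$ to $x$ and fixing every other vertex) is a simplicial map of degree $\pm 1$, so that $T'\ldom T$, and $T'\not\cong T$ because it has one vertex fewer. The strategy is then to compute $\gamma_2(T')$ in terms of $\gamma_2(T)$ and the local combinatorics of $e$, and in each resulting case to contradict minimality of $T$.

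First I would work out how $\gamma_2$ changes under the collapse. Let $c$ be the number of common neighbours of $x$ and $y$ in $T$; by flagness these are exactly the vertices of $\Link{e}{T}$, which, being a full, flag triangulation of $S^1$, is a cycle on $c\ge 4$ vertices. Counting edges: collapsing $e$ removes $e$ itself, turns every other edge at $y$ into an edge at $x$, and in doing so makes the $c$ edges from $y$ to the common neighbours of $x$ and $y$ redundant, while no further coincidences occur; hence $f_1(T')=f_1(T)-1-c$. Substituting into $\gamma_2=16-5f_0+f_1$ gives
\[ \gamma_2(T')=\gamma_2(T)+4-c. \]

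Now I would split on $c$. If $c=4$, then $\gamma_2(T')=\gamma_2(T)>0$, so $T$ dominates the flag triangulation $T'$ of $S^3$, which has positive $\gamma_2$ and is not isomorphic to $T$ — contradicting that $T$ is \minit{}. If instead $c\ge 5$, then $\Link{e}{T}$ has at least $5$ vertices, hence is not a square; since $e$ lies in no square either, \Cref{lemma:edgelink5} applies and yields $T\gdom T_{10}$, which contradicts minimality of $T$ because $T\not\cong T_{10}$ and $\gamma_2(T_{10})=1>0$. Either way we reach a contradiction, so every edge of $T$ lies in a square. (If one prefers, the case $c=4$ can be absorbed by invoking the Davis--Okun inequality $\gamma_2\ge 0$, applied to $T'$: it forces $\gamma_2(T')=0$ and hence $c=\gamma_2(T)+4\ge 5$ outright.) The only step needing genuine care is the edge count $f_1(T')=f_1(T)-1-c$ — one must check that, apart from the $c$ forced identifications, the collapse neither merges nor creates any additional edges, which is exactly where flagness (equivalently, the link condition) is used — after which everything reduces to the case split above together with \Cref{lemma:edgelink5}.
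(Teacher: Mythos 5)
Your proof is correct and follows essentially the same route as the paper: collapse the square-free edge, compute $\gamma_2(T') = \gamma_2(T) + 4 - k$ for $k$ the number of vertices in the edge's link, and conclude via \Cref{lemma:edgelink5} once $k \ge 5$, the case $k=4$ being excluded by minimality (the paper phrases this as: basicness forces $\gamma_2(T')=0$, so $k \neq 4$). Your explicit case split and edge count are just a more detailed write-up of the same argument.
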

	\begin{proof}
		Suppose that there is an edge not contained in a square.
		Then, by collapsing it, we obtain another flag triangulation $T'$.
		It is easy to see that $\gamma_2(T') = \gamma_2(T) + 4 - k$, where $k$ is the number of vertices in the link of the collapsed edge.
		Since $T$ is \minit{}, it must hold $\gamma_2(T') = 0$.
		Since $\gamma_2(T) \neq 0$, the number $k$ cannot be equal to $4$.
		But then $T \gdom T_{10}$ by \Cref{lemma:edgelink5}.
	\end{proof}
	
	There are triangulations in which every edge is contained in a square, but which nonetheless dominate $T_{10}$.
	In fact, a computer search suggests that most of them dominate $T_{10}$ (see Subsection \ref{ssec:results}).
	
	\begin{proposition}\label{prop:infinite_manysquare}
		There are infinitely many pairwise-nonisomorphic flag triangulations of $S^3$ with positive $\gamma_2$ in which every edge is contained in a square.
	\end{proposition}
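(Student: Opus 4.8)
The plan is to build the family by iterating a single operation on $T_{12}$: \emph{doubling along a vertex link}. For a flag triangulation $T$ of $S^3$ and a vertex $v$, let $B_v(T)$ be the full subcomplex of $T$ on $V(T)\setminus\{v\}$; since $T$ is a sphere, $B_v(T)$ is a triangulation of the $3$-ball with boundary $\Link{v}{T}$, a full flag triangulation of $S^2$. Let $D_v(T)$ be the double of $B_v(T)$ across its boundary, i.e.\ the result of gluing two copies of $B_v(T)$ along $\Link{v}{T}$ via the identity. I would then choose vertices $v_1,v_2,\dots$ one at a time and set $N_j=D_{v_j}\bigl(\cdots D_{v_1}(T_{12})\cdots\bigr)$, claiming that, for a suitable choice of the $v_i$, the $N_j$ ($j\ge 1$) are pairwise non-isomorphic flag triangulations of $S^3$ with positive $\gamma_2$ in which every edge lies in a square.

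The structural properties are immediate. A double of a triangulated $3$-ball along its boundary sphere triangulates $S^3$, so each $D_v(T)$ does. It is flag: $B_v(T)$ is a full subcomplex of the flag complex $T$, hence flag, and a clique of $D_v(T)$ cannot meet the interiors of both copies of $B_v(T)$ (vertices of the two copies are adjacent only inside $\Link{v}{T}$), so every clique lies in one copy and spans a simplex there. And $\gamma_2$ is forced by a one-line count: if $v$ has valence $d$, then using that $\Link{v}{T}$ has $d$ vertices and $3d-6$ edges one gets $f_0(D_v(T))=2f_0(T)-2-d$ and $f_1(D_v(T))=2f_1(T)-5d+6$, whence $\gamma_2(D_v(T))=16-5f_0(D_v(T))+f_1(D_v(T))=2\gamma_2(T)$. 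Therefore $\gamma_2(N_j)=2^j\gamma_2(T_{12})=2^j$, and since $\gamma_2$ is an isomorphism invariant the $N_j$ are pairwise non-isomorphic; in particular positivity of $\gamma_2$ is automatic and does not constrain the choice of the $v_i$.

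The real work is to keep every edge contained in a square, and this is what dictates the choice of the $v_i$. I would isolate a sufficient condition: call $v$ \emph{square-redundant} in $T$ if every edge of $T$ avoiding $v$ is contained in a square of $T$ all of whose four vertices avoid $v$. If $v$ is square-redundant, then $D_v(T)$ has every edge in a square, because any edge of $D_v(T)$ lies in one of the two copies of $B_v(T)$ (edges of the shared $\Link{v}{T}$ lie in both), hence comes from an edge of $T$ avoiding $v$, hence from a square of $T$ avoiding $v$, which is a genuine square of that copy. So it suffices to exhibit a square-redundant vertex in $T_{12}$ — a finite check on its $1$-skeleton — and to show that having such a vertex is preserved by $D_v$. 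For the latter one cannot just reuse an old square-redundant vertex, since a square of $T$ through it need not avoid $v$; instead one should arrange that the pair $\{v_j,v_{j+1}\}$ is \emph{jointly} square-redundant in $N_{j-1}$ (every edge avoiding both lies in a square avoiding both), again a finite property, which is easier to secure in a double thanks to the many extra ``straddling'' squares $w-\ell-\bar w'-\ell'-w$ with $\ell,\ell'$ non-adjacent in $\Link{v}{T}$ and $w,\bar w'$ interior vertices of the two copies adjacent to both $\ell$ and $\ell'$.

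The step I expect to be the main obstacle is exactly this propagation: guaranteeing, at every stage, a vertex (or pair of vertices) of $N_{j-1}$ with the right square-redundancy. If a uniform argument proves elusive, a safe fallback is to verify the needed redundancy explicitly for the first couple of triangulations in the sequence (a finite, possibly computer-aided, check), and then replace the uniform operation $D_v$ by chaining together connected sums of a single sufficiently large triangulation from that list — one that, being large, has two vertices with disjoint closed stars, so the chain can be built as (cone)$\,\cup\,$(tube)$\,\cup\cdots\cup\,$(cone) and the square covering of each link of the chain is transparent — which again produces infinitely many examples of unbounded size. Either way the argument ultimately reduces to finitely many combinatorial verifications; setting them up cleanly is the heart of the matter.
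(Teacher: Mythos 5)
Your construction is exactly the one used in the paper (double the ball $T\setminus\mathrm{star}(v)$ along $\Link{v}{T}$, starting from $T_{12}$), and your flagness argument and the count $\gamma_2(D_v(T))=2\gamma_2(T)$ are correct. But the step you flag as ``the real work'' is precisely where your proposal is incomplete, and the extra hypothesis you introduce is unnecessary: no choice of $v$ (and no ``square-redundant vertex'', no propagation along the sequence $v_1,v_2,\dots$) is needed, because the double \emph{automatically} has every edge in a square, for any vertex $v$. The missing argument is short. Take an edge $\{x,y\}$ of $D_v(T)$, say lying in the first copy, and suppose it is not contained in a square of that copy, i.e.\ not in a square of $T$ avoiding $v$. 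Since every edge of $T$ lies in a square, $\{x,y\}$ lies in a square of $T$ through $v$. Because a square is a full $4$-cycle, it cannot contain a triangle, so exactly one endpoint --- say $x$ --- is adjacent to $v$, and the fourth vertex $w$ of that square is also adjacent to $v$; moreover in the $4$-cycle $x$ and $w$ are opposite, hence non-adjacent in $T$, while $w$ is adjacent to $y$. Thus $x,w\in\Link{v}{T}$ and $y$ is interior, so letting $y'$ be the mirror copy of $y$ in the second copy, the cycle $x\,\text{--}\,y\,\text{--}\,w\,\text{--}\,y'\,\text{--}\,x$ is a full $4$-cycle in $D_v(T)$ ($x\not\sim w$ since adjacency on the link is unchanged, and $y\not\sim y'$ since interior vertices of different copies are never adjacent). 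These are exactly the ``straddling'' squares you mention in passing, but you use them only as a heuristic to make your propagation easier, rather than as the complete argument.

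As written, then, your proof does not establish the proposition: the main case is proved only under the square-redundancy hypothesis, whose existence in $T_{12}$ and persistence under doubling are left open, and the fallback (unspecified computer checks plus a ``cone $\cup$ tube $\cup\cdots\cup$ cone'' chaining of connected sums) is not defined precisely enough to verify flagness, positivity of $\gamma_2$, or the square condition. With the straddling-square observation above inserted in place of the square-redundancy discussion, your argument closes and coincides with the paper's proof; your $\gamma_2(N_j)=2^j$ invariant is a perfectly good (indeed slightly slicker) way to see that the resulting triangulations are pairwise non-isomorphic, where the paper simply uses the growing number of vertices.
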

	\begin{proof}
		We describe how, given such a triangulation $T$, we can construct another triangulation satisfying the same properties and having more vertices.
		Then, we can build infinitely many by starting from $T = T_{12}$ and applying the construction arbitrarily many times.
		
		Pick a vertex $v$ of $T$, and let $S \subset T$ be the link of $v$, which is a triangulated $2$-sphere.
		Let $T_1$ and $T_2$ be two copies of $T$ in which $v$ (along with its open star) has been removed: they are triangulations of a $3$-dimensional disk, with boundary isomorphic to $S$.
		Define $T'$ by gluing $T_1$ and $T_2$ along the canonical identification of their boundary.
		\begin{itemize}
			\item \emph{$T'$ is flag}.
			In fact, any set of pairwise-adjacent vertices in $T'$ is contained either in $T_1$ or in $T_2$, and both $T_1$ and $T_2$ are flag complexes.
			\item \emph{Every edge of $T'$ is contained in a square.}
			Let $\{x,y\}$ be an edge in $T'$. We can assume without loss of generality that $x$ and $y$ belong to $T_1$.
			We denote by $x$ and $y$ also the corresponding vertices of $T$.
			If $x$ and $y$ are part of a square in $T_1$, we are done.
			Suppose this is not the case; they must be part of a square in $T$, and this square must contain the vertex $v$.
			Exactly one of $x$ and $y$ must lie in $S$ --- not both, otherwise in $T$ they would form a triangle with $v$, which is impossible if $x$, $y$ and $v$ belong to a square --- let us say that $x$ does. The square contains a fourth vertex $w$ lying on $S$.
			Then, in $T'$ there is a square with vertices $x,y,w,y'$, where $y'$ is the copy of $y$ in $T_2$.
			\item \emph{$\gamma_2(T') > 0$.}
			In fact, by counting the number of vertices and edges of $T'$ it is easy to conclude that $\gamma_2(T') = 2\cdot\gamma_2(T) > 0$.
		\end{itemize}
		Thus, $T'$ satisfies all the conditions in the statement, and by construction it has more vertices than $T$.
	\end{proof}
	
	\begin{remark}
		The construction in the proof of \Cref{prop:infinite_manysquare} does not produce new \minit{} triangulations, because $T'$ dominates $T$ with a map of degree $1$ sending the vertices in the interior of $T_1$ to $v$, and the vertices of $T_2$ to the corresponding ones in $T$.
	\end{remark}
	
	To search for new \minit{} triangulations distinct from $T_{10}$ and $T_{12}$, let us consider the following procedure:
	\begin{enumerate}
		\item Start from any flag triangulation of $S^3$;
		\item Perform a random sequence of edge subdivisions and edge collapses (when performing a collapse, the collapsed edge must not be contained in a square, otherwise flagness is lost);
		\item Keep collapsing edges not contained in squares, to obtain a candidate triangulation $T$ in which every edge is contained in a square;
		\item Check whether $\gamma_2(T) = 0$, or $T\gdom T_{10}$, or $T\gdom T_{12}$. If not, $T$ must dominate a \minit{} triangulation distinct from $T_{10}$ and $T_{12}$ (it could also be \minit{} itself).
	\end{enumerate}
	
	\begin{remark}
		The third step of the procedure isn't strictly necessary, but it helps in reducing the size of $T$, thus making the checks in the last step faster to perform.
		\Cref{rmk:squares_nolocal} below gives an additional reason why performing this step makes subsequent computations easier.
	\end{remark}
	
	Lutz and Nevo \cite{LN2016} proved that if $T$ and $T'$ are two flag PL-homeomorphic simplicial complexes, then it is possible to obtain $T'$ from $T$ by a finite sequence of edge subdivisions and their inverses (which are edge collapses), in such a way that the complex stays flag at every step.
	Moreover, the 3-dimensional \emph{Hauptvermutung} proved by Moise \cite{Moise1952} ensures that any two homeomorphic triangulated manifolds are PL homeomorphic.
	Thus, the procedure outlined above can theoretically produce any flag triangulation of $S^3$.
	If indeed there is a \minit{} triangulation $T$ distinct from $T_{10}$ and $T_{12}$, then there is an appropriate sequence of edge subdivisions and collapses that lead to $T$ in the second step of the procedure.
	
	While executing the third step, one might pass from a triangulation $T$ with $\gamma_2(T) > 0$ to a triangulation $T'$ with $\gamma_2(T') = 0$, obtained from $T$ by collapsing an edge.
	The following lemma ensures that, in this case, $T$ dominates $T_{10}$, so we are not missing \minit{} triangulations in this way.
	\begin{lemma}\label{lemma:becomes0}
		Let $T$ be a flag triangulation of $S^3$ containing an edge $e$ not contained in a square.
		Let $T'$ be the (flag) triangulation of $S^3$ obtained from $T$ by collapsing $e$.
		Suppose that $\gamma_2(T) > 0$ and $\gamma_2(T') = 0$.
		Then, $T\gdom T_{10}$.
	\end{lemma}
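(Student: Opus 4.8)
The plan is to reduce the statement directly to \Cref{lemma:edgelink5}, via a short count of how the $\gamma$-number changes under an edge collapse --- the very same identity already used inside the proof of \Cref{prop:many_squares}.

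First I would track the effect on face numbers of collapsing the edge $e = \{x,y\}$. The vertex $y$ is deleted, so $f_0$ drops by one. Among edges, $e$ itself disappears, and for each common neighbour $w$ of $x$ and $y$ the two edges $\{x,w\}$ and $\{y,w\}$ are identified into a single edge of $T'$; since $T'$ is again a (flag) simplicial complex, these are the only collisions --- an edge $\{y,z\}$ with $z$ not adjacent to $x$ simply becomes a new edge $\{x,z\}$, and edges avoiding $y$ are untouched. Because $T$ is flag, the common neighbours of $x$ and $y$ are exactly the vertices of $\Link{e}{T}$, so there are $k$ of them. Hence $f_1(T') = f_1(T) - 1 - k$, and substituting into $\gamma_2 = 16 - 5f_0 + f_1$ gives
\[ \gamma_2(T') \;=\; \gamma_2(T) + 4 - k. \]

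Next, the hypotheses $\gamma_2(T) > 0$ and $\gamma_2(T') = 0$ force $k = \gamma_2(T) + 4 \ge 5$ (recall $\gamma_2$ is an integer). Thus $\Link{e}{T}$ is a triangulation of $S^1$ with at least five vertices, so in particular it is not a square; and $e$ is not contained in a square by assumption. We are therefore exactly in the hypotheses of \Cref{lemma:edgelink5}, which yields $T \gdom T_{10}$, completing the argument.

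I do not expect any genuine obstacle here: the only point demanding care is the edge bookkeeping above, namely verifying that the only edge-identifications produced by the collapse are those coming from common neighbours of $x$ and $y$ (which is where the assumption that $T'$ remains a simplicial complex is used). Granting that, the lemma is an immediate combination of the $\gamma_2$-identity with \Cref{lemma:edgelink5}.
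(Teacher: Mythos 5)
Your proof is correct and follows the paper's own argument exactly: compute $\gamma_2(T') = \gamma_2(T) - k + 4$ where $k$ is the number of vertices of $\Link{e}{T}$, deduce $k \ge 5$ from the hypotheses, and invoke \Cref{lemma:edgelink5}. The only difference is that you spell out the face-count bookkeeping (correctly, with flagness guaranteeing that common neighbours of the endpoints are exactly the link vertices), which the paper leaves as ``a quick computation.''
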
 
	\begin{proof}
		Let $k$ be the number of vertices in the link of $e$ in $T$ (\ie, the number of vertices adjacent to both endpoints of $e$, but not in $e$).
		A quick computation gives $\gamma_2(T') = \gamma_2(T) - k + 4$.
		We have $\gamma_2(T') < \gamma_2(T')$, which implies $k \ge 5$.
		Now, the conclusion follows from \Cref{lemma:edgelink5}.
	\end{proof}
	
	\begin{remark}
		The proof of \Cref{lemma:becomes0} also shows that, while executing the third step of the procedure, the value of $\gamma_2$ never increases.
	\end{remark}
	
	The details of an actual implementation of the procedure outlined above can vary greatly. Most importantly:
	\begin{itemize}
		\item In the second step, there are many different choices for the number of operations to perform, and for the edges to apply them to.
		Different strategies can lead to exploring different ``regions'' of the space of flag triangulations;
		\item The implementation details of the fourth step are crucial on a practical standpoint: while computing $\gamma_2(T)$ does not pose any problems, since it is just a matter of counting vertices and edges, it is not clear how to check that $T \gdom T_{10}$ or $T \gdom T_{12}$ in an efficient way.
		Enumerating all simplicial maps is computationally very expensive, and it becomes factually impossible already when $T$ has around 20 vertices.
		In Subsection \ref{ssec:local} we discuss an approach that, in practice, allows to perform these checks, quite quickly, even on larger triangulations.
	\end{itemize}
	
	\begin{question}\label{q:dominance_complexity}
		How hard is the algorithmic problem of deciding, given two (flag) triangulations $S$ and $T$, whether $S \gdom T$?
		Does it admit a polynomial-time solution?
		What if the target triangulation $T$ is fixed (in particular, equal to $T_{10}$ or $T_{12}$), and just $S$ is part of the input?
		Note that the existence of a polynomial-time solution when both $S$ and $T$ are part of the input seems unlikely: see \Cref{q:polynomial_algo}, which is closely related.
	\end{question}

	\subsection{Local pictures and maps between them}\label{ssec:local}
	As already remarked, it is not clear how to check efficiently whether a triangulation dominates $T_{10}$ or $T_{12}$.
	Trying all simplicial maps takes too much time.
	In this subsection we describe a special type of simplicial maps, which behave in a specific way in the neighbourhood of a given edge.
	Restricting to such maps leads to much faster algorithms, that however (in principle) might fail to detect that a dominance relation holds, \ie, there might be \emph{false negatives}.
	
	We define a structure that we will use to encode the neighbourhood of an edge of a triangulation of $S^3$.
	\begin{definition}\label{def:localp}
		A \emph{local picture} $S$ is a flag triangulation of $S^2$ endowed with the following additional data:
		\begin{itemize}
			\item An \emph{equator} $E \subset S$, \ie, a full subcomplex homeomorphic to $S^1$. Note that $E$ has at least $4$ vertices, and divides $S$ into two hemispheres;
			\item A bipartite graph with nodes corresponding to the vertices of $S \setminus E$, partitioned into two sets corresponding to the two hemispheres of $S$;
			\item A subset of vertices of $S$ that we consider ``marked''.
		\end{itemize}
	\end{definition}
	
	\begin{definition}
		A map between two local pictures $S_1$ and $S_2$ is a simplicial map $f:S_1 \to S_2$ satisfying the following additional conditions:
		\begin{itemize}
			\item It restricts to a nonconstant and monotone map between the equators, of degree $\pm 1$;
			\item Let $H$ and $K$ be the two hemispheres of $S_1$.
			Then $f$ sends $H$ to one of the hemispheres of $S_2$ and $K$ to the other one;
			\item Adjacent nodes in the bipartite graph of $S_1$ are sent either to adjacent nodes in the bipartite graph of $S_2$, or to vertices which are adjacent (or equal) in the triangulation $S_2$;
			\item Marked vertices are sent to marked vertices.
		\end{itemize}
	\end{definition}
	
	Given a flag triangulation $T$ of $S^3$ and an edge $\{x,y\}\in T$, a local picture arises in the following way.
	Let $S_x$ and $S_y$ be the links of $x$ and $y$ in $T$, respectively. They are two flag triangulations of $S^2$.
	By removing $y$ (and its open star) from $S_x$, we obtain a flag triangulation of a $2$-disk, whose boundary coincides with the link of $\{x,y\}$ in $T$.
	In the same way, we obtain a disk with the same property by removing $x$ from $S_y$.
	By gluing these two disks along their common boundary, we obtain a flag triangulation $S$ of $S^2$, with an equator given by the link of $\{x,y\}$ and two hemispheres corresponding to the two disks.
	Note that $S$ is a subcomplex of $T$, but it might be nonfull, because vertices in the interior of a hemisphere could be adjacent (in $T$) to vertices in the other hemisphere: in fact this happens whenever $\{x,y\}$ is contained in squares in $T$.
	The bipartite graph in \Cref{def:localp} encodes these adjacency relations.
	Finally, a vertex of $S$ is marked if it is adjacent to some vertex which is adjacent neither to $x$ nor $y$.
	Figures \ref{fig:pT10}--\ref{fig:pT12_57} illustrate some examples of local pictures around some edges of $T_{10}$ and $T_{12}$.
	
	\begin{figure}[ht]
		\centering
		\includegraphics[scale=1]{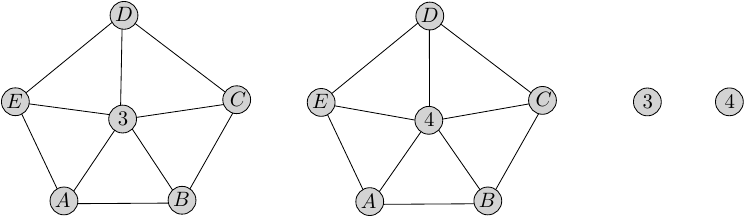}
		\caption{The local picture around edge $\{1,2\}$ of the triangulation $T_{10}$ defined as the join of two pentagons with vertex labels $\{A,B,C,D,E\}$ and $\{1,2,3,4,5\}$.
			The two hemispheres are drawn as separate disks.
			All the vertices are marked, and the bipartite graph, drawn separately on the right, has no edges.}
		\label{fig:pT10}
	\end{figure}
	\begin{figure}[ht]
		\centering
		\includegraphics[scale=1]{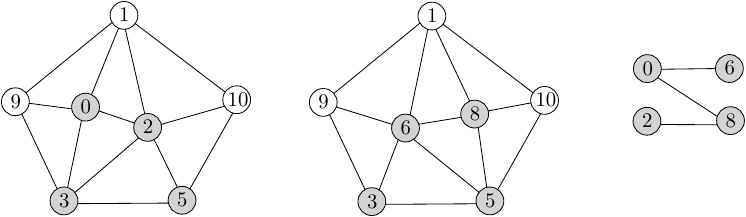}
		\caption{The local picture around edge $\{4,7\} \in T_{12}$.
		Vertices $0,2,3,5,6$ and $8$ are marked.}
		\label{fig:pT12_47}
	\end{figure}
	\begin{figure}[ht]
		\centering
		\includegraphics[scale=1]{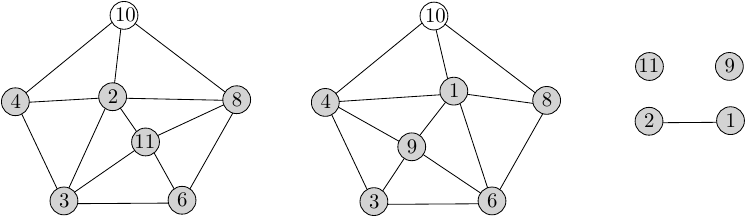}
		\caption{The local picture around edge $\{5,7\} \in T_{12}$.}
		\label{fig:pT12_57}
	\end{figure}
	
	\begin{definition}
		We say that an edge $\{x,y\}$ of a flag triangulation of $S^3$ is almost-omniscient if there is exactly one vertex that is adjacent neither to $x$ nor $y$.
	\end{definition}
	Thus, the local picture around an almost-omniscient edge contains all the vertices of the triangulation but three: the endpoints of the edge and the only vertex that is not adjacent to the edge.
	The local pictures of Figures \ref{fig:pT10}--\ref{fig:pT12_57} are all defined around almost-omniscient edges. 
	
	\Cref{lemma:loc_map} below explains how to recover a simplicial map between triangulations, starting from a map between local pictures, under the condition that the target local picture is defined around an almost-omniscient edge.
	\begin{lemma}\label{lemma:loc_map}
		Let $S$ be the local picture around an almost-omniscient edge of a flag triangulation $T$ of $S^3$.
		Suppose that $f:S' \to S$ is a map between local pictures, where $S'$ is the local picture around an edge of a flag triangulation $T'$.
		Then $f$ extends to a simplicial map from $T'$ to $T$ having degree $\pm 1$.
	\end{lemma}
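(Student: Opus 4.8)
The plan is to extend $f$ to a map on all vertices by the only formula that has a chance of working, to check simpliciality using that $T$ is flag, and then to compute the degree by a purely local count of $3$-simplices meeting the distinguished edge of $T$. Let $\{x,y\}\in T$ be the almost-omniscient edge whose local picture is $S$, and let $z$ be the unique vertex of $T$ adjacent to neither $x$ nor $y$; then $V(T)=V(S)\sqcup\{x,y,z\}$. Likewise let $\{x',y'\}\in T'$ be the edge whose local picture is $S'$, and let $W'$ be the set of vertices of $T'$ adjacent to neither $x'$ nor $y'$, so that $V(T')=V(S')\sqcup\{x',y'\}\sqcup W'$. I would define $\tilde f\colon V(T')\to V(T)$ by $\tilde f|_{V(S')}=f$, by $\tilde f(x')=x$ and $\tilde f(y')=y$ (naming $x,y$ so that $f$ carries the hemisphere of $S'$ coming from the link of $x'$ onto the hemisphere of $S$ coming from the link of $x$), and by $\tilde f(w)=z$ for every $w\in W'$. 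By construction $\tilde f$ restricts to $f$ on the subcomplex $S'$, so it suffices to prove that $\tilde f$ is simplicial; and since $T$ is flag, for this it is enough to check that $\tilde f$ sends each edge of $T'$ to an edge of $T$ or to a single vertex.

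For this I would run through the edges of $T'$ by type. An edge with both endpoints in $V(S')$ is either an edge of the triangulated sphere $S'$ or one of the edges recorded by the bipartite graph of $S'$ (this dichotomy uses fullness of vertex links in the flag complex $T'$), and in either case the defining properties of a map of local pictures make its image an edge or a vertex of $S\subseteq T$. The edge $\{x',y'\}$ maps to $\{x,y\}\in T$, and there is no edge of $T'$ between $W'$ and $\{x',y'\}$. If an edge joins $x'$ (resp.\ $y'$) to a vertex $v\in V(S')$, then $v$ lies in the closed hemisphere of $S'$ coming from the link of $x'$ (resp.\ of $y'$), so $f(v)$ lies in the closed hemisphere of $S$ coming from the link of $x$ (resp.\ of $y$), all of whose vertices are adjacent to $x$ (resp.\ to $y$) in $T$. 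An edge with both endpoints in $W'$ is collapsed to $z$. Finally, if an edge joins $w\in W'$ to $v\in V(S')$, then $v$ is marked in $S'$ (it is adjacent to $w$, which is adjacent to neither $x'$ nor $y'$), hence $f(v)$ is marked in $S$; since $z$ is the only vertex of $T$ adjacent to neither $x$ nor $y$, being marked in $S$ means exactly being adjacent to $z$ in $T$, so $\{\tilde f(w),\tilde f(v)\}=\{z,f(v)\}$ is an edge of $T$. This establishes that $\tilde f$ is simplicial and extends $f$.

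It remains to show $\deg(\tilde f)=\pm1$. Since $T$ and $T'$ are triangulations of $S^3$, their top simplicial chain groups consist of cycles and $H_3\cong\Z$ is generated by the fundamental cycle, so $\tilde f_\#[T']=\deg(\tilde f)\cdot[T]$ holds at the level of simplicial $3$-chains; hence it is enough to read off the coefficient of one well-chosen top simplex of $T$. I would take an edge $\{a,b\}$ of the equator $E$ of $S$, which is the link of $\{x,y\}$ in $T$, so that $\tau=\{x,y,a,b\}$ is a $3$-simplex of $T$. Any $3$-simplex of $T'$ mapping onto $\tau$ must contain $\tilde f^{-1}(x)=\{x'\}$ and $\tilde f^{-1}(y)=\{y'\}$, so it has the form $\{x',y',a',b'\}$ with $\{a',b'\}$ an edge of the equator $E'$ of $S'$ that $f$ maps homeomorphically onto $\{a,b\}$; since $f$ restricts to a monotone map $E'\to E$ of degree $\pm1$, exactly one such edge exists. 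Thus $\tau$ has a unique preimage $3$-simplex, the coefficient of $\tau$ in $\tilde f_\#[T']$ is $\pm1$, and therefore $\deg(\tilde f)=\pm1$.

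The part I expect to be genuinely delicate is the edge-by-edge check of simpliciality: one has to be certain that the bipartite graph and the set of marked vertices of a local picture record precisely the adjacencies of $T'$ that are not already visible inside the links of $x'$ and $y'$, and that almost-omniscience of the target edge is exactly what makes ``marked in $S$'' coincide with ``adjacent to $z$ in $T$''. With that in hand, everything else reduces to a routine computation together with the standard fact about monotone simplicial maps between triangulated circles.
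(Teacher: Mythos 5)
Your proposal is correct and follows essentially the same route as the paper's proof: the same extension (edge to edge coherently with the hemispheres, all remaining vertices to the unique vertex not adjacent to the target edge), simpliciality via the marked-vertex, hemisphere and bipartite-graph conditions, and degree $\pm 1$ by counting the unique preimage tetrahedron over a tetrahedron containing the distinguished edge, using monotonicity on the equators. Your edge-by-edge verification simply spells out details that the paper leaves implicit.
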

	\begin{proof}
		Let $\{x,y\}\in T$ be the edge with local picture $S$, and $\{x',y'\}\in T'$ be the edge with local picture $S'$.
		Extend $f$ by sending $\{x',y'\}$ to $\{x,y\}$ in the way that is coherent with the way $f$ maps the hemispheres of $S'$ to the hemispheres of $S$; then, send the remaining vertices of $T'$ to the unique remaining vertex of $T$.
		
		From the fact that marked vertices are sent to marked vertices, it follows that this extension of $f$ is simplicial.
		Moreover, it has degree $\pm 1$ because for any tetrahedron of $T$ containing $\{x,y\}$ there is exactly one tetrahedron of $T'$ that is mapped onto it (this follows from the prescribed behaviour of $f$ on the equators).
	\end{proof}
	To find a map that certifies that a certain triangulation $T$ dominates $T_{10}$ or $T_{12}$, we can search for maps from local pictures in $T$ to the local pictures in Figures \ref{fig:pT10}--\ref{fig:pT12_57}.
	In practice, this strategy turns out to be successful, as explained in Subsection \ref{ssec:results}. 
	
	\begin{question}
		How hard is the algorithmic problem of deciding whether there is a map between two local pictures?
		Does it admit a polynomial-time solution?
		The algorithm implemented by the author for the searches described in Subsection \ref{ssec:results} builds a map vertex by vertex, making sure that all conditions are satisfied, until it is defined on all vertices or a dead end is reached, in which case it reverts the last choices until the whole tree of possibilities is traversed.
		This works reasonably well in practice, despite having an exponential running time.
	\end{question}
	
	\subsection{More details on the search and its results}\label{ssec:results}
	The searches performed by implementing on a computer the procedure outlined in Subsection \ref{ssec:quest} didn't lead to \minit{} triangulations distinct from $T_{10}$ and $T_{12}$.
	In this subsection, we discuss some more details about the specific implementation by the author, which is available in a GitHub repository \cite{simplicial-spheres}, and the results obtained by running the resulting computer program.
	The structure of the implemented search algorithm is outlined below.
	Notice how two sequences of triangulations appear, denoted by $T_i$ and $T_i'$.
	The idea is that we perform randomly a ``walk'' in the space of triangulations, and the visited triangulations are the ones denoted by $T_i$.
	The triangulations that are actually analysed are the ones with the prime symbol, which are the ``simplified'' triangulations obtained by collapsing edges not contained in squares.
	\begin{itemize}
		\item Start from a flag triangulation $T_0$ (\ie, the join of two pentagons);
		\item For $i = 1,\dots,N$, repeat the following series of steps:
		\begin{enumerate}
		\item \label{it:sub} Obtain a new triangulation $T_{i}$ from $T_{i-1}$ by performing some edge subdivisions;
		\item \label{it:con} Modify $T_{i}$ by performing some edge collapses;
		\item \label{it:squ} Obtain a new triangulation $T_i'$ from $T_i$ by collapsing edges, until every edge of $T_i'$ is contained in a square ($T_i$ is preserved for the next iteration, where it is needed to construct $T_{i+1}$);
		\item \label{it:check} If $\gamma_2(T_i') > 0$, for every edge of $T_i'$, construct its local picture and search for a map to the local pictures of Figures \ref{fig:pT10} -- \ref{fig:pT12_57} (which come from $T_{10}$ and $T_{12}$).
		\end{enumerate}
	\end{itemize}
	
	In Step \ref{it:squ} (the collapse of edges not contained in squares), the edge to be collapsed is selected uniformly at random among those not contained in squares, and this process is repeated until every edge is contained in a square.
	More sophisticated strategies could be employed, like collapsing edges whose link has more (or less) vertices than others.
	In fact, as explained below, this strategy is used in Step \ref{it:con}.
	
	
	The number $K$ of subdivisions performed in Step \ref{it:sub} is fixed beforehand.
	Then, in Step \ref{it:con}, the algorithm \emph{attempts} to perform a number of collapses $K'$, which is approximately equal to $K$ but can vary slightly in different iterations, according to the number of vertices of $T'_{i-1}$:
	if the number of vertices is bigger than a fixed threshold, then $K'$ is slightly bigger than $K$; if it is smaller than another threshold, then $K'$ is slightly smaller than $K$; if it lies in the interval between the two thresholds, then $K' = K$.
	This encourages the number of vertices of $T'_i$ to stay in a fixed window.
	Note that the execution of Step \ref{it:con} could end prematurely because the edges that can be collapsed (those not contained in squares) might run out.
	
	As for the selection of the edge to be subdivided or collapsed, in Step \ref{it:sub} (for subdivisions) it is selected uniformly at random among all edges.
	In Step \ref{it:con}, instead, two behaviours are implemented:
	\begin{itemize}
		\item If $\gamma_2(T_{i-1}')$ is smaller than a fixed threshold, then the edge is chosen uniformly at random;
		\item If $\gamma_2(T_{i-1}')$ is bigger than the threshold, then the edge is chosen at random among those having the biggest links.
	\end{itemize}
	The rationale behind this strategy is trying to keep $\gamma_2$ from growing too much iteration after iteration (see the proof of \Cref{lemma:becomes0}) and to avoid, at the same time, that $\gamma_2(T_i')=0$ too often.
	\begin{remark}
		It appears, from the runs performed by the author, that if in Step \ref{it:con} the edge is always chosen at random, without considering the link sizes, then not only $\gamma_2$ tends to grow, but also the number of vertices of $T_i$ grows, due to the edges not contained in squares running out prematurely.
	\end{remark}

	The algorithm has been run several times, adjusting the values of the parameters described above in different ways across the various runs.
	For every $T_i'$ with positive $\gamma_2$, a map to a local picture coming from $T_{10}$ or $T_{12}$ was found, certifying that $T_i'\gdom T_{10}$ or $T_i'\gdom T_{12}$ (or both).
	
	The following table is obtained by aggregating the results of all the runs.
	%
	%
	The table lists, for every $v$, how many \emph{pairwise nonisomorphic} triangulations $T_i'$ having exactly $v$ vertices have been ``visited'' during these runs (that together correspond to a total of roughly $N = 10\,000\,000$ iterations of the procedure).
	Recall that all these triangulations have the following properties: they are flag and their edges are contained in squares.
	
	\vspace{.7em}
	\noindent\begin{tabular}{c|c|c|c|c|c|c|c|c|c}
		\bfseries{N. of vertices} & 8 & 12 & 13 & 14 & 15 & 16 & 17 & 18 &  \dots\\
		\hline
		& 1 & 1  & 1  & 2  & 10 & 99 & 1\,051 & 17\,317 & \dots
	\end{tabular}
	\vspace{.7em}\\
	\begin{tabular}{c|c|c|c|c|c|c|c}
		\dots & 19 & 20 & 21 & 22 & 23 & 24 & \dots\\
		\hline
		\dots & 197\,874 & 503\,708 & 573\,314 & 549\,136 & 495\,291 & 427\,299 & \dots
	\end{tabular}
	\vspace{.7em}\\
	\begin{tabular}{c|c|c|c|c|c|c|c}
		\dots & 25 & 26 & 27 & 28 & 29 & 30 & \dots\\
		\hline
		\dots & 353\,892 & 278\,300 & 209\,255 & 151\,772 & 106\,825 & 73\,043 & \dots
	\end{tabular}
	\vspace{.7em}\\
	\begin{tabular}{c|c|c|c|c|c|c|c|c}
		\dots & 31 & 32 & 33 & 34 & 35 & 36 & 37 & \dots \\
		\hline
		\dots & 48\,426 & 31\,656 & 20\,546 & 12\,994 & 8\,398 & 5\,223 & 3\,472  & \dots
	\end{tabular}
	\vspace{.7em}\\
	\begin{tabular}{c|c|c|c|c|c|c|c|c|c|c}
		\dots & 38 & 39 & 40 & 41 & 42 & 43 & 44 & 45 & 46 & \dots \\
		\hline
		\dots & 2\,169 & 1\,425 & 877 & 593 & 413 & 304 & 183 & 122 & 83 
	\end{tabular}
	\vspace{.7em}\\
	\begin{tabular}{c|c|c|c|c|c|c|}
		\dots & 47 & 48 & 49 & 50 & 51 & 52\\
		\hline
		\dots & 62 & 23 & 17 & 4 & 1 & 1
	\end{tabular}
	\vspace{.7em}

	\begin{remark}
    	The procedure finds exactly one triangulation with 8 vertices: the octahedral triangulation.
		It is also the only encountered triangulation with $\gamma_2 = 0$.
		This is evidence in support of a conjecture of Lutz and Nevo \cite[Conjecture 6.1]{LN2016}, see \Cref{conj:collapses}.
	\end{remark}
	\begin{remark}
		There is a ``gap'' between 8 and 12 vertices.
		In fact, it can be proved that any flag triangulation with 9, 10 or 11 vertices has ``collapsible'' edges (not contained in squares).
		The triangulation with 12 vertices is $T_{12}$.
	\end{remark}
	\begin{conjecture}
		$T_{12}$ is, up to isomorphism, the only flag triangulation of $S^3$ with 12 vertices in which every edge is contained in a square.
	\end{conjecture}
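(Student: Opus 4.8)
The statement is a finite combinatorial one, and the plan is to whittle the possibilities down with a priori constraints and then finish by an (in practice computer-assisted) enumeration; the interesting question is how much of this can be done by pure structural reasoning. Throughout, let $T$ be a flag triangulation of $S^3$ with $12$ vertices in which every edge lies in a square. First I would record the elementary bounds. Each vertex link is a flag triangulation of $S^2$, hence has at least $6$ vertices, so every valence of $T$ is at least $6$; on the other hand, if an edge $\{x,y\}$ lies in a square $x\,y\,u\,w$, then $u$ is nonadjacent to $x$, so every vertex has a nonneighbour, no valence equals $11$, and all valences lie in $\{6,\dots,10\}$. Hence $2f_1(T)=\sum_v\deg(v)\le 120$, while the Davis--Okun inequality $\gamma_2(T)\ge 0$ \cite{DO2001} reads $f_1(T)\ge 44$; together $44\le f_1(T)\le 60$. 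Since the $1$-skeleton of $T$ is $K_5$-free (its flag complex is pure of dimension $3$), Turán's theorem improves this to $f_1(T)\le 54$, i.e.\ $\gamma_2(T)\le 10$, and one should try to squeeze it further by feeding the square condition back in. The natural target is $\gamma_2(T)=1$, i.e.\ $f_1(T)=45$ --- the value realised by $T_{12}$ --- which would reduce the problem to enumerating flag $3$-spheres on $12$ vertices with exactly $45$ edges; already ruling out $\gamma_2(T)=0$, however, is the $12$-vertex instance of the Lutz--Nevo-type conjecture recalled above (that the octahedral $3$-sphere is the only flag $S^3$ with every edge in a square and $\gamma_2=0$), so this sub-step is not to be taken for granted and would have to be established along the way.

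Second, I would organise the search around vertex links. Fix a vertex $v$ of minimum valence $k\in\{6,\dots,10\}$: its link $L_v$ is a flag triangulation of $S^2$ on $k$ vertices, and for each such $k$ there are only finitely many such complexes (readily enumerated; the octahedron being the unique one for $k=6$, the case realised by $T_{12}$). The remaining $11-k\le 5$ vertices of $T$ are exactly the nonneighbours of $v$, and the square condition forces each of the (at least six) neighbours of $v$ to be adjacent to one of those $\le 5$ far vertices; imposing in addition that every vertex link again be a flag $S^2$ and that the whole complex be flag is highly rigid, especially for small $k$. One then checks case by case which valence profiles and link data admit a completion to a flag $S^3$, and verifies that, up to isomorphism, the only one is $T_{12}$; that $T_{12}$ itself satisfies all the hypotheses is already known from Venturello's analysis cited above.

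I expect this last step --- actually carrying out the case analysis --- to be the main obstacle. A hand argument short enough to count as a proof would need a genuinely clever exploitation of the interplay between flagness, the Davis--Okun inequality, and the square condition, and I am not confident such an argument exists. The pragmatic route is instead to generate all flag $3$-spheres on $12$ vertices (for instance by the bistellar-flip / edge-subdivision searches underlying the code of \cite{simplicial-spheres}, or via existing classifications) and to filter them through the square condition, the subtle point being to certify that the generation is exhaustive; once one is down to a few explicit candidates, verifying that they are flag $3$-spheres, that every edge lies in a square, and identifying the isomorphism type are all routine.
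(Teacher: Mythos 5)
There is a fundamental mismatch here: the statement you are addressing is not proved in the paper at all --- it is stated as a \emph{conjecture}, supported only by the (non-exhaustive) random-search evidence of Subsection 4.6, where $T_{12}$ was the unique $12$-vertex triangulation ever encountered among the flag triangulations with every edge in a square. Your proposal does not close this gap either. The preliminary observations you make are correct: every vertex link is a flag $2$-sphere, so all valences are at least $6$; the square condition gives every vertex a nonneighbour, so valences lie in $\{6,\dots,10\}$; Davis--Okun gives $f_1\ge 44$ and $K_5$-freeness of the $1$-skeleton (via flagness and $\dim T=3$) gives $f_1\le 54$ by Tur\'an. Likewise your remark that each neighbour of a vertex $v$ must be adjacent to some nonneighbour of $v$ is a correct consequence of the square condition. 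But these constraints only bound the search space; they do not identify $T_{12}$, and you say yourself that the decisive step --- the case analysis over possible minimal links and valence profiles, or alternatively an exhaustive generation of all flag $3$-spheres on $12$ vertices filtered by the square condition --- is not carried out and may not admit a short hand argument. Moreover, one of your intermediate targets (excluding $\gamma_2=0$ for such a triangulation) is itself conditional on the Lutz--Nevo conjecture, which is open.

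So the honest assessment is: what you have written is a reasonable research plan, broadly consistent with how one might eventually settle the question (and with the kind of computational evidence the paper reports), but it is not a proof, and the paper contains no proof to compare it against. If you want to turn this into a result, the burden lies precisely where you located it: either a certified exhaustive enumeration of flag $3$-spheres on $12$ vertices satisfying the square condition, or a structural argument strong enough to pin down the link configuration uniquely --- neither of which is present in your text or in the paper.
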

	It seems plausible, looking at the result of the executions, that all the relevant triangulations with $\le 16$ vertices have been found.
	In fact, they have been all ``discovered'' in the first few thousands of iterations (and then ``rediscovered'' many times afterwards).
	Also, the plots in \Cref{fig:v17-18} might suggest that most of the triangulations with 17 vertices, and a very good portion of those with 18 vertices, have been analysed as well. 
	Compare \Cref{fig:v20}, which instead indicates that only a small percentage of the triangulations with 20 vertices has been found and checked.
	\begin{figure}[ht]
		\centering
		\begin{minipage}{.47\textwidth}
			\centering
			\includegraphics[scale=.36]{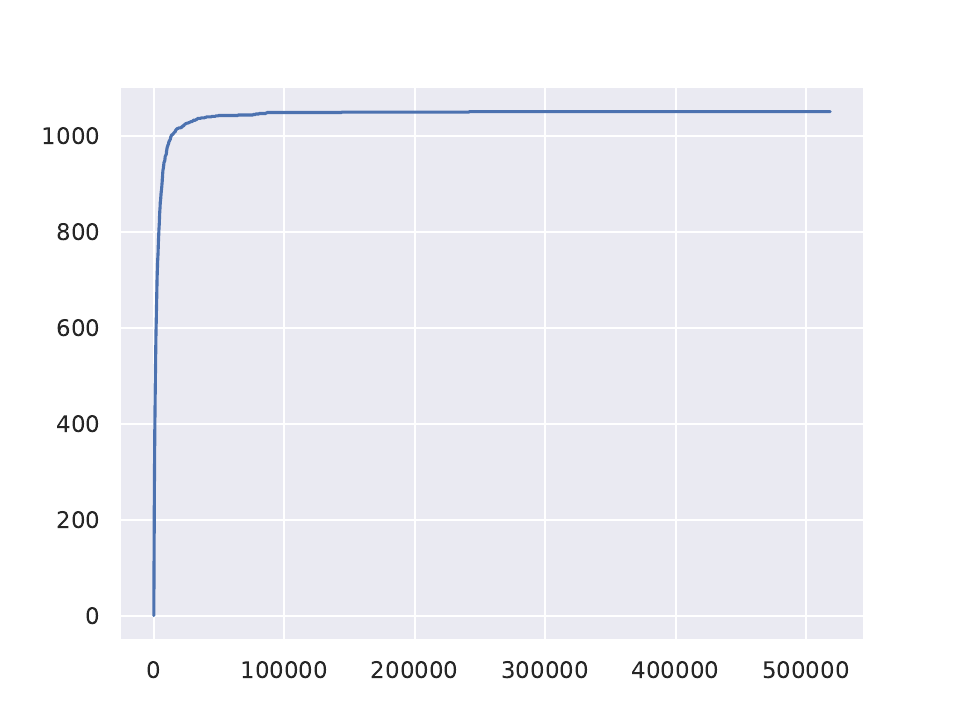}
		\end{minipage}%
		\hspace{.05\textwidth}
		\begin{minipage}{.47\textwidth}
			\centering
			\includegraphics[scale=.36]{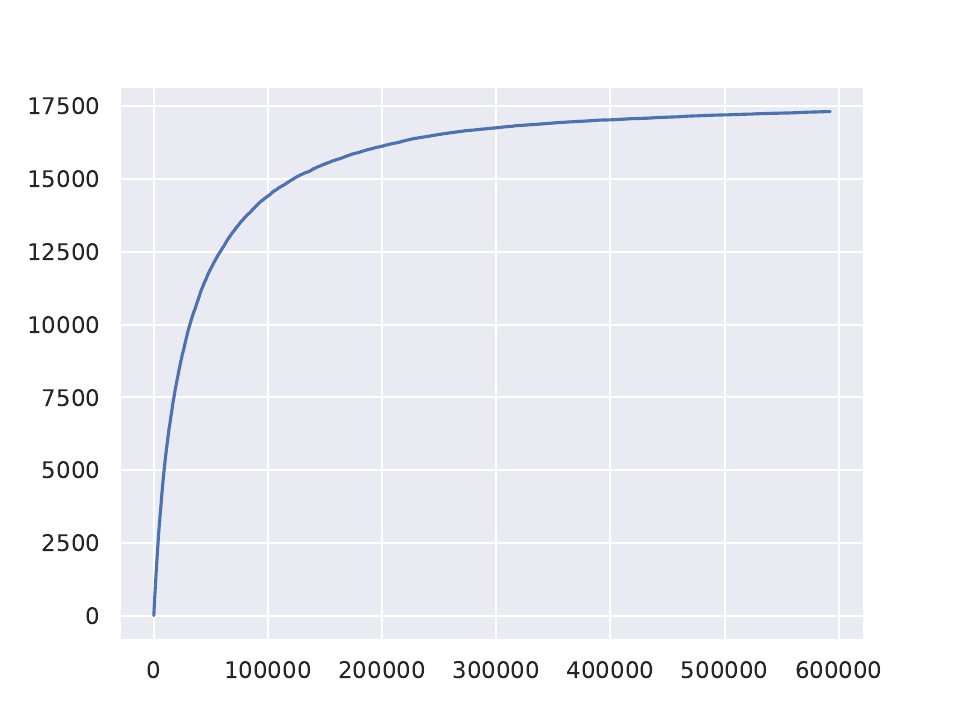}
		\end{minipage}
		\caption{The growth of the number of distinct triangulations with a fixed number of vertices (17 on the left, 18 on the right).
		A unit on the horizontal axis corresponds to the event of finding such a triangulation during the execution of the algorithm; there is a corresponding increment on the vertical axis when the triangulation is not isomorphic to the ones discovered up to that point.
		The plot becoming flatter and flatter means that finding a novel triangulation becomes a quite rare event.}
		\label{fig:v17-18}
	\end{figure}
	\begin{figure}[!ht]
		\centering
		\includegraphics[scale=.36]{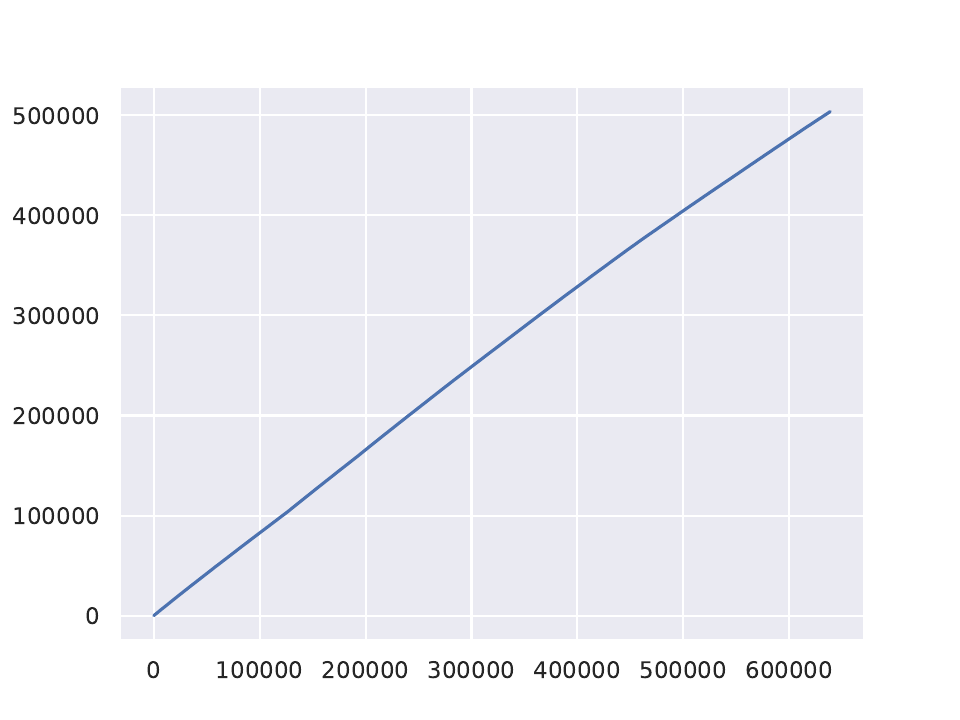}
		\caption{The growth of the number of distinct triangulations with 20 vertices, while executing the algorithm.
		Notice how the plot is approximately linear, with slope close to $1$.}
		\label{fig:v20}
	\end{figure}
	\begin{remark}\label{rmk:squares_nolocal}
		Since $T_i \gdom T_i'$, and every encountered $T_i'$ dominates either $T_{10}$ or $T_{12}$ via a map between local pictures, by transitivity we have $T_i \gdom T_{10}$ or $T_i \gdom T_{12}$.
		There are cases in which there are \emph{no} maps from local pictures of $T_i$ to local pictures of $T_{10}$ or $T_{12}$.
		Thus, the existence of a map between local pictures is a stronger property than dominance, in general.
		This makes Step \ref{it:squ} of the algorithm very helpful, not only because it reduces the size of the triangulation, but most importantly beacuse the ``nonsimplified'' triangulation $T_i$ might not dominate $T_{10}$ or $T_{12}$ via maps between local pictures, while the ``simplified'' $T_i'$ does.
	\end{remark}

	\section{Dominance vs graph minors}\label{sec:minors}
	In this section we establish a connection between the notion of dominance between triangulations (\Cref{def:dominance}) and the notion of minors from graph theory.
	Once such a connection is established, powerful theorems from the theory of graph minors allow us to obtain many consequences about the dominance relation.
	Unfortunately, we are able to state a precise connection only for triangulations of $S^2$, a case that we already understand pretty well with the results in \Cref{sec:3dim}.
	However, some of the consequences are interesting even in this particular case.
	
	Let us recall the definition of \emph{minors} of a graph.
	\begin{definition}
		Let $G$ and $H$ be two finite graphs.
		$H$ is said to be a minor of $G$ if it is isomorphic to a graph obtained from $G$ by applying a finite sequence of operations of the following types:
		\begin{itemize}
			\item Deletion of an edge;
			\item Collapse of an edge (two adjacent vertices are identified, and one edge joining them is removed);
			\item Deletion of an isolated vertex.
		\end{itemize}
	\end{definition}
	The third type of operation is almost superfluous, in the sense that it can be easily accomplished by a sequence of operations of the first two types, unless $G$ has some connected components that have to be completely deleted.
	In particular, if $G$ is connected (and $H$ is nonempty) we can always avoid this operation.
	
	Also note that we have to allow graphs to have multiple edges between two vertices, as well as loops (\ie{}, edges whose two endpoints coincide) if we want the notion to be stable under collapses of edges.
	However, one can always modify the sequence of operations and perform all deletions of edges before the collapses; by doing so, if both $G$ and $H$ are simplicial graphs, the graph stays simplicial at all times.
	
	The following theorem is considered one of the most remarkable in graph theory; it is one of the main results in a series of more than twenty papers by Robertson and Seymour, in which they develop a very deep theory around the notion of graph minors.
	\begin{theorem}[\cite{RS2004}]\label{thm:minors}
		Let $G_1, G_2, \dots$ be an infinite sequence of finite graphs.
		Then there are indices $i < j$ such that $G_i$ is a minor of $G_j$.
	\end{theorem}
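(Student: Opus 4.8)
The plan is to prove the equivalent statement that the class of all finite graphs is \emph{well-quasi-ordered} by the minor relation, i.e.\ that it contains no infinite antichain (there is automatically no infinite strictly descending chain, since a proper minor of a graph has strictly fewer vertices-plus-edges). The reduction to excluded-minor classes is carried out by Nash-Williams' \emph{minimal bad sequence} argument: if the minor order were not a well-quasi-order, one could build a ``bad'' sequence $G_1, G_2, \dots$ — one with no $i<j$ for which $G_i$ is a minor of $G_j$ — by choosing at each step $n$ a graph $G_n$ of minimum value $|V(G_n)| + |E(G_n)|$ that still allows the initial segment to be extended to a bad sequence. Two things come out of this. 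First, in any bad sequence every term after the first excludes the first term as a minor, so, passing to that tail, we may assume all $G_i$ are $H$-minor-free for a single fixed graph $H$. Second — and this is where the minimality is used — the ``smaller'' graphs produced by deleting or contracting parts of the $G_i$ inherit a well-quasi-order, which is the leverage that lets an induction on the excluded minor close.

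The heart of the matter is therefore the analysis of $H$-minor-free graphs, via Robertson and Seymour's \emph{structure theorem}. I would first establish, as an independent ingredient, that for each fixed surface $\Sigma$ the graphs embeddable in $\Sigma$ are well-quasi-ordered under minors; this is proved by induction on the genus, cutting an embedded graph along a short noncontractible curve and reassembling the pieces with Kruskal's tree theorem (Higman's lemma applied to the labelled pieces). Then comes the structure theorem: every graph with no $K_n$ minor can be built by clique-sums of bounded size from graphs that \emph{almost} embed in a surface of genus bounded in terms of $n$, meaning they embed after deleting boundedly many apex vertices and inserting boundedly many ``vortices'' of bounded path-width along face boundaries. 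One upgrades the surface result to this wider class (apices contribute a bounded chunk of extra data; vortices are handled by path-width and Higman-type arguments), and then shows that clique-summing well-quasi-ordered classes again yields a well-quasi-ordered class, using a tree-decomposition that records the sum structure together with, once more, Kruskal's tree theorem. Taking $n$ large enough that $H$ is a minor of $K_n$, applying this to the class of $H$-minor-free graphs, and combining with the minimal bad sequence reduction, produces a contradiction and hence the theorem (originally conjectured by Wagner).

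The main obstacle — and the reason the actual proof occupies the long series of papers culminating in \cite{RS2004} rather than fitting here — is the structure theorem itself, together with the considerable combinatorial bookkeeping needed to make the ``almost embeds in a surface'' description simultaneously true and strong enough to carry the well-quasi-order through vortices and clique-sums. By comparison, the order-theoretic scaffolding (the minimal bad sequence argument, Higman's lemma, Kruskal's tree theorem, the clique-sum gluing) is standard, but it has to be carefully interleaved with the graph structure at every stage. Since in this paper \Cref{thm:minors} is used only as a black box — in \Cref{sec:minors}, to conclude that a certain subposet of triangulations of $S^2$ has finitely many minimal elements — I would not attempt to reproduce any of this machinery, and would simply invoke \cite{RS2004}.
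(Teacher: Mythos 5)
The paper offers no proof of this statement at all --- it is quoted verbatim from Robertson and Seymour and used purely as a black box in \Cref{sec:minors} --- and your proposal, after an accurate high-level sketch of the graph-minors program (minimal bad sequences, well-quasi-ordering of graphs on surfaces, the structure theorem for $K_n$-minor-free graphs, clique-sums and Kruskal/Higman arguments), likewise concludes by simply invoking \cite{RS2004}. So you are taking essentially the same approach as the paper, and your summary of why the full argument cannot be reproduced here is correct.
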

	
	With the following proposition we connect the notion of dominance (\Cref{def:dominance}) with that of graph minors, at least for triangulations of $S^2$.
	\begin{proposition}\label{prop:minor_s2}
		Let $T_1$ and $T_2$ be simplicial complexes homeomorphic to $S^2$, and denote by $G_1$ and $G_2$ their $1$-skeletons.
		If $G_1$ is a minor of $G_2$, then there is a simplicial map $f:T_2\to T_1$ of degree $\pm 1$.
	\end{proposition}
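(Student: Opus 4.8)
The plan is to turn the hypothesis ``$G_1$ is a minor of $G_2$'' into a \emph{minor model}. Since $G_2$ is connected (it is the $1$-skeleton of a triangulation of $S^2$) and $G_1$ is nonempty, one can find pairwise disjoint blocks $\{B_v\}_{v\in V(T_1)}$ with $\bigcup_v B_v=V(T_2)$, each inducing a connected subgraph of $G_2$, such that for every $\{u,v\}\in E(T_1)$ some edge of $G_2$ joins $B_u$ to $B_v$. I would then define $f\colon V(T_2)\to V(T_1)$ by sending $x$ to the index of its block, and everything reduces to proving that $f$ is simplicial and that $\deg(f)=\pm1$.

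Both points would go through the \emph{contracted complex}. Pick a spanning tree $F_v$ of $G_2[B_v]$ for each $v$; the union $\bigsqcup_v F_v$ is a forest, hence a cellular subcomplex of $T_2$ whose components are contractible, so the quotient map $q\colon|T_2|\to X:=|T_2|/{\sim}$ crushing each $|F_v|$ to a point has $X$ homeomorphic to $S^2$ and $\deg(q)=\pm1$. Its CW structure on $X$ has vertex set $\{\bar v\}_{v\in V(T_1)}$, a $1$-skeleton $\bar G$ (which may a priori have loops and multiple edges), and $2$-cells the images of the triangles of $T_2$, each a triangle, a bigon, or a monogon. The combinatorial heart is that the simplification of $\bar G$ is exactly $G_1$: it contains $G_1$ (the witnessing edges of the model project to edges of $\bar G$), it is simple and planar on $n:=|V(T_1)|\ge4$ vertices and therefore has at most $3n-6$ edges, while $G_1$ --- being the $1$-skeleton of a triangulation of $S^2$ --- has exactly $3n-6$ edges.

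With this in hand $f$ is simplicial. The only nonobvious case is a $2$-simplex $\{x,y,z\}$ of $T_2$ with $f(x),f(y),f(z)$ distinct, say $u,v,w$: I would argue that $\{u,v,w\}$ is a face of $T_1$. Since the three vertices lie in three different blocks, no edge of $\overline{xyz}$ lies in any $G_2[B_s]$ and $\overline{xyz}$ meets each $|F_s|$ in at most one point, so $q$ is injective on $\overline{xyz}$; thus $q(\overline{xyz})$ is an embedded triangular disc in $X\cong S^2$ with boundary the cycle $u\,v\,w$ of $G_1$ and with no vertex of $X$ in its interior (the interior is the image of the open simplex, disjoint from $\bigcup_s|F_s|$). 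If $\{u,v,w\}$ were \emph{not} a face of $T_1$, then --- using that the $3$-connected planar graph $G_1$ has an essentially unique embedding, so it lies in $X$ as it does in $|T_1|$ --- the cycle $u\,v\,w$ would bound two discs each containing a vertex, contradicting the existence of $q(\overline{xyz})$. Edges and lower simplices are handled at once by $\bar G^{\mathrm{simple}}=G_1$.

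For the degree I would collapse, one at a time, the bigon and monogon $2$-cells of $X$: each such move identifies a pair of parallel edges or crushes a loop, is a degree-$\pm1$ map onto a homeomorphic $S^2$, and never involves an edge of $G_1$; the end result is a simplicial triangulation of $S^2$ with $1$-skeleton $G_1$, which by uniqueness of the embedding of $G_1$ is $T_1$. The composite of $q$ with these collapses is a degree-$\pm1$ map $|T_2|\to|T_1|$ agreeing with $f$ on vertices and realizing the cellular map it induces, hence homotopic to $f$; so $\deg(f)=\pm1$. I expect this last step to be the main obstacle: identifying the collapsed complex with $T_1$ and verifying that the tower of collapses consists of degree-$\pm1$ homotopy equivalences requires keeping careful track of the possibly many degenerate cells of $X$. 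Alternatively one could compute $\deg(f)$ directly as the signed number of $2$-simplices of $T_2$ mapped bijectively onto a fixed $2$-simplex of $T_1$, but bookkeeping that count essentially re-proves the same facts about the contraction.
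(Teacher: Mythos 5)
Your construction of $f$ and your proof that it is simplicial are correct, but they follow a genuinely different route from the paper. The paper fixes spanning trees of the branch sets plus one witness edge per edge of $G_1$, and then counts the connected components of the complement of this ``preferred'' subgraph $P\subset T_2$ via Euler's formula, matching them bijectively with the $2$-simplices of $T_1$; simpliciality and the degree both fall out of the resulting discs $D_\sigma$. You instead pass to the quotient CW sphere $X$, pin down the $1$-skeleton by the maximal-planarity edge count ($G_1$ has exactly $3n-6$ edges, so the simplification of $\bar G$ cannot exceed it), and handle triangles with three distinct labels via Whitney's unique-embedding theorem for the $3$-connected graph $G_1$ together with the facial-or-separating dichotomy for $3$-cycles. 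Modulo the standard facts you invoke ($3$-connectivity of simplicial $2$-spheres, Whitney, extendability of a minor model to a partition of $V(T_2)$), this part is a valid and rather elegant alternative; it uses the sphere hypothesis on $T_1$ through maximality of planar graphs rather than through the paper's region count.

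The gap is in the degree-$\pm1$ argument, as you yourself suspect. First, the $2$-cells of $X$ are not only triangles, bigons and monogons: a triangle of $T_2$ with two vertices in the same block joined by a \emph{non-tree} edge yields a cell whose boundary is a loop together with two edges, and triangles entirely inside one block can give cells bounded by two or three loops. Second, loops and parallel edges of $\bar G$ need not bound monogon or bigon $2$-cells at all, so the proposed collapsing procedure is not guaranteed to terminate in a complex whose $1$-skeleton is $G_1$, nor is it shown that no edge of (a chosen copy of) $G_1$ ever gets crushed. Third, ``agrees with $f$ on vertices, hence homotopic to $f$'' is not a valid inference by itself; you would need to check that the composite map carries each closed simplex of $T_2$ into the closed simplex of $T_1$ spanned by the $f$-images of its vertices, so that a straight-line homotopy to $|f|$ exists. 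The cleanest repair is the alternative you mention and set aside: compute $\deg(f)$ as the signed count of triangles of $T_2$ mapped onto a fixed $2$-simplex $\sigma$ of $T_1$. This is exactly what the paper does, and the bookkeeping is made painless by its region argument: all triangles of $T_2$ that can map onto $\sigma$ lie in a single disc $D_\sigma$ whose boundary maps to $\partial\sigma$ with degree $\pm1$, while triangles outside $D_\sigma$ lie in some other $D_{\sigma'}$ and hence cannot hit $\sigma$; without some substitute for this confinement statement, your count (or your tower of collapses) does not close.
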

	\begin{proof}
		Consider a sequence of operations that realises $G_1$ as a minor of $G_2$; we can assume that only collapses and deletions of edges are performed.
		By keeping track of the ``history'' of each vertex of $G_2$ during these operations (recall that we never delete vertices), we obtain a map (we think of it as a labelling of the vertices of $G_2$) $f:V(G_2) \to V(G_1)$ with the following properties:
		\begin{itemize}
			\item For every $v \in V(G_1)$, its preimage $f^{-1}(\{v\})$ spans a connected subgraph of $G_2$, \ie{}, any two vertices in $G_2$ sharing the same label are the endpoints of a path in $G_2$ visiting only vertices with that same label;
			\item If $v,w\in V(G_1)$ are adjacent, then there are $v',w' \in V(G_2)$ that are adjacent and have labels $f(v') = v$ and $f(w') = w$.
		\end{itemize}
		For every $v \in V(G_1)$, we fix a spanning tree of $f^{-1}(\{v\})$, \ie{}, a subgraph of $G_2$ without cycles whose vertices are exactly those labelled with $v$.
		We say that an edge which is part of this subgraph is $v$-preferred.
		We also fix, for every pair of adjacent vertices $v,w \in V(G_1)$, an edge in $G_2$ connecting two vertices labelled with $v$ and $w$.
		We say that this edge is $\{v,w\}$-preferred.
		
		We say that an edge of $G_2$ is preferred if it is $v$-preferred for some $v$ or $\{v,w\}$-preferred for some $v,w$.
		Let $P$ be the subgraph of $G_2$ with $V(P) = V(G_2)$ and whose edges are exactly the preferred ones.
		The graph $P$ divides the sphere $T_2$ in a number of connected components which is equal to the number of $2$-simplices of $T_1$.
		This follows, for instance, from the following fact: the number of components that a planar graph $G$ cuts the plane (or the sphere) into is equal to $2-V(G)+E(G)$; and it is easy to see that $V(P)-E(P) = V(T_1)-E(T_1)$.
		
		Let $\sigma=\{v_1, v_2,v_3\}$ be a $2$-simplex in $T_1$.
		There is a unique simple closed path $\gamma_\sigma$ in $P\subset T_2$ passing only through vertices labelled with $v_1$, $v_2$ and $v_3$.
		It cuts $T_2$ into two components homeomorphic to discs.
		If $x,y \in V(T_1) = V(G_1)$ are vertices not belonging to $\sigma$, then they are joined by a path in $G_1$ that avoids $v_1, v_2$ and $v_3$.
		This implies that the set of vertices in $G_2$ labelled with $x$ or $y$ are all contained in the same component of $T_2\setminus \gamma_\sigma$.
		Thus, among the two discs that $\gamma_\sigma$ cuts $T_2$ into, there is an ``internal'' one that only (possibly) contains vertices labelled with $v_1,v_2$ and $v_3$, and an ``external'' one in which the vertices with any other label must lie.
		We denote by $D_\sigma$ the internal disc; it is a subcomplex of $T_2$.
		The boundary of $D_\sigma$ is contained in $P$ by construction; $P$ may also intersect the interior of $D_\sigma$, but it cannot split it in multiple connected components.
		
		We have just described a way to associate to any $2$-simplex $\sigma$ of $T_1$ a connected component of $T_2\setminus P$, which is given by $D_\sigma \setminus P$.
		This association is injective, because $\sigma$ is determined by the set of labels of vertices belonging to the (closure of the) associated connected component.
		But then it must be bijective, being a map between finite sets of the same cardinality.
		This implies that the discs $D_\sigma$, as $\sigma$ varies among the $2$-simplices of $T_1$, cover $T_2$; in particular, every simplex of $T_2$ is contained in some $D_\sigma$, and therefore the labels of its vertices are all contained in a simplex of $T_1$.
		This means that $f$ defines a simplicial map $f:T_2\to T_1$.
		
		Let us show that $\abs{\deg(f)} = 1$.
		Pick any $2$-simplex $\sigma$ in $T_1$.
		By restricting $f$ we get simplicial maps $f_\sigma:D_\sigma \to \sigma$ and $f_{\partial\sigma}:\partial D_\sigma \to \partial \sigma$.
		It is clear that $f_{\partial_\sigma}$ has degree $\pm 1$; therefore, also $f_\sigma$ has degree $\pm 1$.
		This means that, counting with sign the $2$-simplices in $D_\sigma$ that are mapped surjectively onto $\sigma$, we get $\pm 1$.
		On the other hand, if a $2$-simplex of $T_2$ is not contained in $D_\sigma$, then it cannot be mapped surjectively onto $\sigma$, because it is contained in some other $D_{\sigma'}$.
		This shows that $\abs{\deg(f)} = 1$.
	\end{proof}
	
	\begin{corollary}\label{cor:well_dominance}
		Any set of pairwise-nonisomorphic triangulations of $S^2$, endowed with the partial order given by dominance (\Cref{def:dominance}), has a finite number of minimal elements.
	\end{corollary}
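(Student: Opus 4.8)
The plan is to deduce this quickly from \Cref{prop:minor_s2} and the graph-minor theorem \Cref{thm:minors}. First I would recall the elementary fact (already used in \Cref{sec:4dim}) that a simplicial map of nonzero degree between triangulations of $S^2$ must be surjective on vertices: otherwise its image is a proper subcomplex of the target, hence is contained in a disk and carries no degree-$2$ homology, forcing the degree to vanish. Consequently $T_1 \ldom T_2$ implies $\abs{V(T_1)} \le \abs{V(T_2)}$, with equality only when $T_1 \cong T_2$; in particular $\ldom$ is a genuine partial order, and if $T_1 \ldom T_2$ with $T_1 \not\cong T_2$ then $T_1$ lies \emph{strictly} below $T_2$.

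Next I would argue by contradiction. Suppose some set $\mathcal{T}$ of pairwise-nonisomorphic triangulations of $S^2$ has infinitely many $\ldom$-minimal elements, and pick an infinite sequence $T_1, T_2, \dots$ of such minimal elements, no two isomorphic. Let $G_i$ be the $1$-skeleton of $T_i$. By \Cref{thm:minors} there are indices $i < j$ with $G_i$ a minor of $G_j$. By \Cref{prop:minor_s2} there is then a simplicial map $T_j \to T_i$ of degree $\pm 1$, so $T_i \ldom T_j$. Since $T_i$ and $T_j$ are non-isomorphic, $T_i$ lies strictly below $T_j$ by the previous paragraph, and $T_i$ belongs to $\mathcal{T}$; this contradicts the minimality of $T_j$.

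I do not anticipate any real obstacle: once \Cref{prop:minor_s2} is in hand, the statement is a formal consequence of Robertson--Seymour's theorem, i.e.\ of the fact that finite graphs are well-quasi-ordered by the minor relation. The only points that need a moment's attention are matching the notion of ``minor'' in \Cref{thm:minors} with the hypothesis of \Cref{prop:minor_s2} (they coincide, since one may always perform all edge deletions before the contractions and keep the graphs simplicial), and observing --- via the surjectivity-on-vertices remark --- that a dominance relation between two non-isomorphic triangulations genuinely violates minimality rather than merely reproducing the same isomorphism class.
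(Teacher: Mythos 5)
Your proposal is correct and follows the same route as the paper: assume infinitely many minimal elements, apply Robertson--Seymour (\Cref{thm:minors}) to their $1$-skeletons, and use \Cref{prop:minor_s2} to get a dominance relation contradicting minimality. The extra care you take with antisymmetry (nonzero-degree maps are vertex-surjective, so dominance between non-isomorphic triangulations is strict) is a point the paper leaves implicit in this proof but records elsewhere, so it is a welcome, not a divergent, addition.
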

	\begin{proof}
		If this is not the case, enumerate the minimal triangulations $T_1, T_2, \dots$ and consider their $1$-skeletons.
		By \Cref{thm:minors}, the $1$-skeleton of $T_i$ is a minor of the $1$-skeleton of $T_j$ for some indices $i < j$.
		By \Cref{prop:minor_s2}, $T_j$ dominates $T_i$, against the minimality of $T_j$.
	\end{proof}
	In particular, we immediately rediscover the following nonconstructive and less precise version of \Cref{thm:2dim}.
	\begin{corollary}
		The poset of triangulations of $S^2$ giving rise to manifolds with positive simplicial volume, with the partial order given by dominance (\Cref{def:dominance}), has a finite number of minimal elements, up to isomorphism.
	\end{corollary}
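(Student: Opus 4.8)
The plan is to obtain this statement as an immediate consequence of \Cref{cor:well_dominance}. Let $\mathcal{P}$ denote the set of (isomorphism classes of) triangulations $T$ of $S^2$ with $\SV{\Dav{T}} > 0$, equipped with the dominance order inherited from the full poset of triangulations of $S^2$. Since $\mathcal{P}$ is in particular a set of pairwise-nonisomorphic triangulations of $S^2$, \Cref{cor:well_dominance} applies directly and tells us that $\mathcal{P}$ has finitely many minimal elements, up to isomorphism.

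It is worth spelling out the underlying argument, since it is short. Suppose $\mathcal{P}$ had infinitely many pairwise-nonisomorphic minimal elements $T_1, T_2, \dots$. Applying \Cref{thm:minors} to the sequence of their $1$-skeletons $G_1, G_2, \dots$, we obtain indices $i < j$ such that $G_i$ is a minor of $G_j$. By \Cref{prop:minor_s2}, there is then a simplicial map $T_j \to T_i$ of degree $\pm 1$, so $T_i \ldom T_j$. Since $T_i \in \mathcal{P}$ and $T_i$ is nonisomorphic to $T_j$, this contradicts the minimality of $T_j$ in $\mathcal{P}$. The only point to verify is that dominance restricted to $\mathcal{P}$ is just the restriction of dominance on all triangulations of $S^2$ --- which is immediate --- so that a relation $T_i \ldom T_j$ between two members of $\mathcal{P}$ is a relation internal to $\mathcal{P}$; hence ``minimal in $\mathcal{P}$'' is the relevant notion throughout.

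There is no real obstacle here: the substantive content has already been established, namely \Cref{prop:minor_s2}, which converts the combinatorial relation ``$G_1$ is a minor of $G_2$'' into the topological relation ``$T_1 \ldom T_2$ via a degree-$\pm 1$ map'', together with the Robertson--Seymour theorem \Cref{thm:minors} lurking behind \Cref{cor:well_dominance}. We note that this corollary is strictly weaker than \Cref{thm:2dim}, which identifies the unique minimal element explicitly as $T_9$; the interest of the present formulation is that it is a soft, nonconstructive consequence of general principles, requiring none of the case analysis of \Cref{sec:3dim}.
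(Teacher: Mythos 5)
Your proposal is correct and matches the paper's own argument: the corollary is stated there as an immediate specialization of \Cref{cor:well_dominance} to the subposet of triangulations with $\SV{\Dav{T}}>0$, exactly as you do. Your unpacking of the underlying Robertson--Seymour plus \Cref{prop:minor_s2} argument is just the proof of \Cref{cor:well_dominance} itself, so there is no divergence in approach.
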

	
	\begin{question}
		Exploiting \Cref{thm:minors}, or some variations of it, can we obtain the same conclusion (finiteness of the set of minimal triangulations, among those giving positive simplicial volume, or those with $\gamma_2 > 0$) for triangulations of $S^3$?
	\end{question}
	Here is another corollary.
	We already know the case $T = T_9$, that holds with $d_T = 1$ by \Cref{thm:2dim}.
	\begin{corollary}\label{cor:dim2_deg}
		Let $T$ be a triangulation of $S^2$.
		There is a natural number $d_T$ such that, if $S$ is any triangulation of $S^2$ that dominates $T$, then it does so via a simplicial map $f$ with $\abs{\deg(f)} \le d_T$.
	\end{corollary}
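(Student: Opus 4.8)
The plan is to deduce the corollary directly from \Cref{prop:minor_s2} and the graph minor theorem (\Cref{thm:minors}), by an argument by contradiction; notably, this will not even require the finiteness statement of \Cref{cor:well_dominance}. Fix the triangulation $T$. For every triangulation $S$ of $S^2$ with $T\ldom S$ (\ie, $S$ dominates $T$), let $\mu(S)$ be the least value of $\abs{\deg f}$ taken over all simplicial maps $f\colon S\to T$ of nonzero degree. This minimum is attained and is a positive integer, simply because there are only finitely many maps $V(S)\to V(T)$. The content of the corollary is that $\mu$ is bounded on the set $\mathcal{C}$ of all triangulations of $S^2$ that dominate $T$; once this is shown, any upper bound may be taken as $d_T$.

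Suppose, for contradiction, that $\mu$ is unbounded on $\mathcal{C}$. Then I can pick triangulations $S_1, S_2, \dots \in \mathcal{C}$ with $\mu(S_1) < \mu(S_2) < \cdots$. Applying \Cref{thm:minors} to the sequence of $1$-skeletons $G_{S_1}, G_{S_2}, \dots$, there are indices $i<j$ such that $G_{S_i}$ is a minor of $G_{S_j}$. By \Cref{prop:minor_s2} this produces a simplicial map $h\colon S_j\to S_i$ with $\abs{\deg h}=1$. On the other hand, by definition of $\mu$ there is a simplicial map $f\colon S_i\to T$ with $\abs{\deg f}=\mu(S_i)$. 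The composite $f\circ h\colon S_j\to T$ is again simplicial, and since the degree is multiplicative under composition (the geometric realizations are closed oriented manifolds) we get $\abs{\deg(f\circ h)}=\mu(S_i)$. Hence $\mu(S_j)\le\mu(S_i)$, contradicting $\mu(S_i)<\mu(S_j)$. This contradiction finishes the proof.

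There is essentially no obstacle here: the only points to be checked are that $\mu(S)$ is well defined and finite for $S\in\mathcal{C}$ (immediate from finiteness of the set of vertex maps) and that the degree is multiplicative for the composite simplicial map (standard). All the real work is imported --- the degree-$1$ map coming out of a minor containment is exactly \Cref{prop:minor_s2}, and the ``pigeonhole'' step is \Cref{thm:minors}. I would conclude, echoing the remark right before the statement, that the argument is entirely non-constructive: it yields no explicit value of $d_T$, gives no procedure to compute one, and leaves open both whether $d_T=1$ always suffices and whether any version survives for triangulations of $S^3$ (where \Cref{prop:minor_s2} is not available).
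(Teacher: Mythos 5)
Your proof is correct and follows essentially the same route as the paper: both arguments rest on \Cref{thm:minors} and \Cref{prop:minor_s2}, combined with composition of simplicial maps and multiplicativity of the degree. The only difference is presentational --- the paper extracts the finitely many minor-minimal elements among the triangulations dominating $T$ and takes $d_T$ to be the largest degree needed for those, while you run the same well-quasi-order principle as a proof by contradiction on a sequence with strictly increasing $\mu$.
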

	\begin{proof}
		Let $\mathcal{T}$ be the set of triangulations of $S^2$ (up to isomorphism) that dominate $T$.
		Endow $\mathcal{T}$ with the following partial order: $S_1 \le S_2$ if the $1$-skeleton of $S_1$ is a minor of the $1$-skeleton of $S_2$.
		By \Cref{thm:minors}, the poset $\mathcal{T}$ has a finite number of minimal elements, that we call $T_1, \dots, T_k$.
		Let $d_T$ be such that any $T_i$ dominates $T$ via a map of degree at most $d_T$ in absolute value. 
		
		Take now $S \in \mathcal{T}$.
		There must be a $T_i$ such that the $1$-skeleton of $T_i$ is a minor of the $1$-skeleton of $S$.
		By \Cref{prop:minor_s2}, $S$ dominates $T_i$ via a map of degree $\pm 1$, and so, by composition, it dominates $T$ via a map of degree at most $d_T$ in absolute value.
	\end{proof}
	\begin{question}
		Is there a more direct proof of this?
		Can we take $d_T = 1$?
		What about higher dimensions?
	\end{question}

	\subsection{Algorithmic considerations}
	We recall another result from the series of papers by Robertson and Seymour, about the algorithmic complexity of deciding whether a graph is a minor of another.
	\begin{theorem}[{\cite{RS1995}}]\label{thm:minor_alg}
		Let $H$ be a finite graph.
		There is an algorithm that takes as input a finite graph $G$ and decides whether $H$ is a minor of $G$ in time $O(\abs{V(G)}^3)$, where $V(G)$ is the set of vertices of $G$.
	\end{theorem}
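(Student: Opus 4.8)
The statement is Robertson and Seymour's $H$-minor testing algorithm, the algorithmic capstone of the Graph Minors series, so there is no hope of a self-contained proof; I will instead lay out the architecture of their argument. The first move is to pass from minors to \emph{models}: $H$ is a minor of $G$ exactly when $G$ contains pairwise-disjoint connected subgraphs $(B_h)_{h \in V(H)}$, the \emph{branch sets}, such that for every edge $hh'$ of $H$ there is an edge of $G$ between $B_h$ and $B_{h'}$. Since $H$ is fixed, the combinatorial ``skeleton'' of such a model --- which branch set contains each of a bounded number of relevant vertices, and along which vertices the linking paths run --- has boundedly many possibilities, and enumerating them reduces $H$-minor testing to a bounded number of instances of the rooted \emph{$k$-disjoint-paths problem}: given $k$ pairs of terminals in $G$, decide whether there exist $k$ pairwise vertex-disjoint paths joining them. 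So it suffices to solve the $k$-disjoint-paths problem in time $O(\abs{V(G)}^3)$ for each fixed $k$; this is the technical core.

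For the $k$-disjoint-paths problem the plan is a dichotomy on the treewidth $\mathrm{tw}(G)$, with a threshold $w = w(k)$. In the low-treewidth case $\mathrm{tw}(G) \le w$ one first computes a tree decomposition of bounded width in linear time (Bodlaender's algorithm; any of the cheaper $O(\abs{V(G)}^2)$ variants would already suffice for the stated bound), and then solves the problem by dynamic programming along the decomposition --- existence of the disjoint paths is expressible in monadic second-order logic, so Courcelle's theorem yields a linear-time algorithm once the decomposition is available. In the high-treewidth case $\mathrm{tw}(G) > w$ one invokes the structural machinery: large treewidth forces a large grid minor (the Excluded Grid Theorem), and, refining this, the flat-wall theorem produces a large planar ``flat wall'' in $G$; inside the middle of such a wall one can always find a vertex $v$ that is \emph{irrelevant}, meaning that any solution to the disjoint-paths instance can be rerouted to avoid $v$. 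One then deletes $v$ and recurses on $G - v$. With care, locating an irrelevant vertex can be carried out in $O(\abs{V(G)}^2)$ time, and at most $\abs{V(G)}$ deletions occur, which produces the $O(\abs{V(G)}^3)$ bound.

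The main obstacle --- and the reason this result consumes a large fraction of the twenty-odd Graph Minors papers --- is the production of the irrelevant vertex in the high-treewidth case. It rests on the full structure theorem for graphs excluding a fixed minor (such graphs decompose by clique-sums into pieces nearly embeddable in bounded-genus surfaces, up to a bounded number of ``apex'' and ``vortex'' exceptions), together with the flat-wall theorem that converts large treewidth into a well-behaved planar region in which homotopy and rerouting arguments can be run. By comparison, the reduction from minors to disjoint paths, the Courcelle-style dynamic programming in the bounded-treewidth case, and the bookkeeping that assembles everything into an $O(\abs{V(G)}^3)$ algorithm are all routine once that machinery is in place.
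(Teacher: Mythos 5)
The paper does not prove \Cref{thm:minor_alg} at all: it is imported verbatim from Robertson and Seymour's Graph Minors XIII, so there is no internal argument to compare yours against, and you were right not to attempt a self-contained proof. As a summary of the cited proof your outline is essentially faithful --- fixed-minor testing is reduced to boundedly many rooted linkage-type subproblems, and the disjoint-paths core is handled by the treewidth dichotomy with the irrelevant-vertex/flat-wall argument doing the heavy lifting in the high-treewidth case, giving at most $\abs{V(G)}$ deletions at quadratic cost each. Two caveats are worth flagging. First, your reduction from minor containment to the $k$-disjoint-paths problem is too quick as stated: the branch sets $(B_h)$ are connected subgraphs of unbounded size, so one cannot simply enumerate ``skeletons'' of a model and ask for disjoint paths between boundedly many prescribed terminals; Robertson and Seymour instead solve the more general rooted problem of computing the \emph{folio} (all minors of bounded detail relative to a bounded set of roots), of which both $H$-minor testing and $k$-disjoint paths are special cases, and it is this generalization that makes the recursion go through. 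Second, the appeal to Bodlaender's linear-time tree-decomposition algorithm and to Courcelle's theorem is anachronistic: the original $O(\abs{V(G)}^3)$ bound uses Robertson--Seymour's own width-approximation and an explicit dynamic program over the decomposition, not monadic second-order model checking --- this is only a historical point, since the tools you name do yield the same (indeed better) bounds, but it means your sketch describes the modern streamlined proof rather than the one in the cited paper.
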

	A technical note about the time complexity in \Cref{thm:minor_alg}, in case of the presence of multiple edges between vertex pairs: we assume that, given two vertices $v$ and $w$ of $G$ and an integer $k$, we can decide whether there are (at least) $k$ edges with endpoints $v$ and $w$ in a time that depends only on $k$.
	
	\begin{corollary}\label{cor:dim2_alg}
		Let $T$ be a triangulation of $S^2$.
		There is an algorithm that takes as input a triangulation $S$ of $S^2$ and decides in time $O(\abs{V(S)}^3)$ whether $S$ dominates $T$.
	\end{corollary}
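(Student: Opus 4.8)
The plan is to reduce the domination test to finitely many graph-minor tests and then invoke \Cref{thm:minor_alg}.

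First I would record the following characterization, which is essentially contained in the proof of \Cref{cor:dim2_deg}: let $\mathcal{T}$ be the set of triangulations of $S^2$ (up to isomorphism) that dominate $T$, partially ordered by declaring $S_1 \le S_2$ when the $1$-skeleton of $S_1$ is a minor of the $1$-skeleton of $S_2$. By \Cref{thm:minors}, this poset has a finite set of minimal elements $T_1, \dots, T_k$, where $k$ depends only on $T$. I claim that a triangulation $S$ of $S^2$ dominates $T$ if and only if the $1$-skeleton of $T_i$ is a minor of the $1$-skeleton of $S$ for at least one index $i \in \{1, \dots, k\}$. For the ``if'' direction: if the $1$-skeleton of some $T_i$ is a minor of that of $S$, then \Cref{prop:minor_s2} gives $T_i \ldom S$; since $T_i \in \mathcal{T}$ means $T \ldom T_i$, transitivity of the dominance relation yields $T \ldom S$. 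For the ``only if'' direction: if $S$ dominates $T$, then $S \in \mathcal{T}$, so by finiteness of the minimal elements there is some $T_i$ with $T_i \le S$, which is precisely the assertion that the $1$-skeleton of $T_i$ is a minor of that of $S$.

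Given this characterization, the algorithm is immediate: on input $S$, for each $i = 1, \dots, k$ run the algorithm of \Cref{thm:minor_alg} with $H$ equal to the (fixed) $1$-skeleton of $T_i$ and $G$ equal to the $1$-skeleton of $S$; output ``yes'' if and only if at least one of these $k$ tests is positive. Each test runs in time $O(\abs{V(S)}^3)$ --- here $V(G) = V(S)$, and the number of edges of the planar graph $G$ is linear in the number of its vertices, so the input is handled within the stated bound --- and since $k$ depends only on $T$ and not on $S$, the total running time is still $O(\abs{V(S)}^3)$.

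The main point to flag is that this argument, like the proof of \Cref{cor:well_dominance}, is non-constructive: \Cref{thm:minors} guarantees that the finite list $T_1, \dots, T_k$ exists but gives no way to produce it, so it is the \emph{existence} of the algorithm that is asserted, not an explicit description of it. (For $T = T_9$, by contrast, \Cref{thm:2dim} makes everything explicit: one only needs to test whether the $1$-skeleton of $T_9$ is a minor of that of $S$.)
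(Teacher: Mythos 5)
Your proof is correct and takes essentially the same route as the paper: both reduce ``$S$ dominates $T$'' to testing whether one of the finitely many minor-minimal elements $T_1,\dots,T_k$ of $\mathcal{T}$ (as in the proof of \Cref{cor:dim2_deg}) has its $1$-skeleton as a minor of that of $S$, using \Cref{prop:minor_s2} plus transitivity for one direction, and then run the algorithm of \Cref{thm:minor_alg} a constant number $k$ of times. The only tiny imprecision is attributing the ``only if'' step to finiteness of the set of minimal elements; what is really used is that the minor order has no infinite strictly descending chains, so every element of $\mathcal{T}$ lies above a minimal one --- the paper makes the same tacit step, so this is not a substantive difference.
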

	\begin{proof}
		Let $\mathcal{T}$ and $T_1, \dots, T_k$ be as in the proof of \Cref{cor:dim2_deg}.
		Let $S$ be a triangulation of $S^2$.
		Note that, if $S \in \mathcal{T}$, then there is some $T_i$ whose $1$-skeleton is a minor of the $1$-skeleton of $S$.
		On the other hand, if there is a $T_i$ whose $1$-skeleton is a minor of the $1$-skeleton of $S$, then $S$ dominates $T_i$ by \Cref{prop:minor_s2}, and thus $S \in \mathcal{T}$.
		The conclusion follows by applying \Cref{thm:minor_alg} with $H = T_i$, for each $T_i$. 
	\end{proof}
	
	\begin{question}\label{q:polynomial_algo}
		The proof of \Cref{cor:dim2_alg} is nonconstructive, because we don't have a procedure to find the triangulations $T_1, \dots, T_k$, starting from $T$.
		Is there a more explicit polynomial-time algorithm for the same problem?
		What if we let $T$ be part of the input?
		In other words, is there a universal algorithm that works for every $T$?
		Note that the algorithmic problem of deciding whether a graph is a minor of another is known to be NP-complete, so it is very unlikely that it has a polynomial-time solution. 
	\end{question}

	\section{Other questions}\label{sec:questions}
	It would be good to understand the simplicial volume of manifolds associated to triangulations with $\gamma_2=0$, as well.
	Up to now I am not aware of examples of such manifolds with positive simplicial volume.
	\begin{question}\label{q:converse}
		Is there a flag triangulation $T$ of $S^3$ with $\gamma_2(T) = 0$ and $\SV{\Dav{T}} > 0$?
		Or, even outside the context of Davis' construction, is there a closed aspherical four-dimensional manifold with vanishing Euler characteristic and positive simplicial volume?
	\end{question}
	The same questions in even dimensions strictly larger than $4$ have positive answers, since we can take the product of two odd-dimensional manifolds with positive simplicial volume (the odd-dimensional case is obvious).
	
	\begin{question}\label{q:collapse}
		Let $T$ be a flag triangulation of $S^3$ and $e$ be a $1$-simplex in $T$ which is not contained in any square and whose link is a square.
		Let $T'$ be the result of collapsing $e$; it is another flag triangulation of $S^3$.
		If $\Dav{T}$ has positive simplicial volume, does $\Dav{T'}$ have positive simplicial volume?
	\end{question}
	It is readily seen that $\gamma_2(T') = \gamma_2(T)$.
	Therefore, a negative example to to \Cref{q:collapse} would either produce a counterexample to \Cref{conj:sv_chi}, if $\gamma_2(T) > 0$, or answer positively to \Cref{q:converse}, if $\gamma_2(T) = 0$.
	This is also related to the following conjecture of Lutz and Nevo.
	\begin{conjecture}[The three-dimensional case of {\cite[Conjecture 6.1]{LN2016}}]\label{conj:collapses}
		Let $T$ be a flag triangulation of $S^3$ with $\gamma_2(T) = 0$.
		Then there is a sequence of collapses as in \Cref{q:collapse} that reduce $T$ to the octahedral $3$-sphere.
	\end{conjecture}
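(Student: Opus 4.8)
The plan is to argue by induction on the number of vertices $f_0(T)$, the induction terminating at the octahedral $3$-sphere, which has $f_0 = 8$ and contains no edge of the kind appearing in \Cref{q:collapse} --- every edge lies in a square, it being the join of four copies of $S^0$. The key point is that, inside the class of flag triangulations of $S^3$ with $\gamma_2 = 0$, \emph{any} square-free edge is automatically of the type required in \Cref{q:collapse}, and collapsing it keeps us inside the class. Indeed, let $e \in T$ be an edge contained in no square. As recalled in \Cref{sec:pre}, collapsing $e$ yields another flag triangulation $T'$ of $S^3$, and, as computed in the proof of \Cref{prop:many_squares}, $\gamma_2(T') = \gamma_2(T) + 4 - k$, where $k$ is the number of vertices of the cycle $\Link{e}{T}$. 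Now $\Link{e}{T}$ is a full, hence flag, subcomplex of $T$ that is a triangulation of $S^1$, so $k \ge 4$; on the other hand $\gamma_2(T') \ge 0$ by the theorem of Davis and Okun, whence $4 - k \ge -\gamma_2(T) = 0$, i.e.\ $k \le 4$. Therefore $k = 4$: the link of $e$ is a square, $e$ is an edge as in \Cref{q:collapse}, and $\gamma_2(T') = \gamma_2(T) = 0$. So it suffices to prove the following \emph{Key Lemma}: every flag triangulation $T$ of $S^3$ with $\gamma_2(T) = 0$ other than the octahedral $3$-sphere has an edge contained in no square.

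For the Key Lemma, the first step I would attempt is a reduction to the case where $T$ is a suspension $S \star S^0$, with $S$ a flag triangulation of $S^2$: having a square through \emph{every} edge is a strong ``product-like'' constraint, and one can hope either to build, out of the squares, a full subcomplex $S^0 \star S^0 \star (\,\cdot\,)$ serving as a suspension axis, or to exploit the weighted $\ell^2$-cohomology machinery underlying the Davis--Okun theorem, since $\gamma_2(T) = 0$ forces the reduced $\ell^2$-homology of $\Dav{T}$ to vanish in every degree --- an extremely rigid condition. Granting $T = S \star \{n,s\}$, one checks directly which edges lie in squares. Since a flag triangulation of $S^2$ has no vertex adjacent to all the others (a polygon triangulation has an ear, which would violate flagness), every edge $\{v,n\}$ and $\{v,s\}$ lies in a square --- namely $\{v,n,w,s\}$ for $w$ a non-neighbour of $v$ in $S$ --- while every square through an edge of $S$ is already a square of $S$. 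Hence ``every edge of $T$ lies in a square'' is equivalent to ``every edge of $S$ lies in a square of $S$'', and the Key Lemma for suspensions becomes the two-dimensional statement: \emph{a flag triangulation of $S^2$ in which every edge lies in a square is the octahedral $2$-sphere.}

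The two-dimensional statement I would prove by induction on $f_0(S)$, in the spirit of the elementary reductions of \Cref{sec:3dim}: Euler's formula gives $\sum_v(\mathrm{val}(v) - 6) = -12$, so $S$ has a vertex of valence $4$ or $5$; analysing the link of such a vertex and the disc it bounds, one aims either to produce an edge of $S$ lying in no square (contradicting the hypothesis unless $S$ is already the octahedron) or to collapse a square-free edge, lowering $f_0$ while preserving the hypothesis. The base case --- the octahedron $K_{2,2,2}$, in which every edge lies in the square spanned by the two vertices opposite its endpoints --- is then the unique flag $2$-sphere with the property.

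The main obstacle is, of course, that the statement is an open conjecture of Lutz and Nevo; concretely, the genuine difficulty is the first step of the Key Lemma --- forcing a $\gamma_2 = 0$ flag $3$-sphere in which every edge lies in a square to be a suspension (equivalently, ruling out ``exotic'' such spheres). Without this reduction, one must exhibit a square-free edge in an arbitrary $\gamma_2 = 0$ flag $3$-sphere directly, and neither the combinatorics of squares nor the $\ell^2$-homology of $\Dav{T}$ seems to hand one over; the computer experiments of \Cref{sec:4dim} are consistent with the statement but offer no structural lever. The two-dimensional statement, by contrast, looks within reach of the reduction techniques already present in the paper, the one new subtlety being to track the condition ``every edge lies in a square'' under the collapses used in the induction.
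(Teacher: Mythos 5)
The statement you were asked about is not a theorem of the paper at all: it is an open conjecture of Lutz and Nevo which the paper only cites (and whose consequences it discusses), so there is no proof in the paper to compare against, and your proposal does not close the gap either --- as you yourself acknowledge. What your first paragraph does establish, correctly and in line with computations that do appear in the paper (the identity $\gamma_2(T') = \gamma_2(T) + 4 - k$ from the proofs of \Cref{prop:many_squares} and \Cref{lemma:becomes0}, together with the Davis--Okun inequality $\gamma_2 \ge 0$ and the fact that the link of an edge in a flag complex is flag, hence has at least $4$ vertices), is that \Cref{conj:collapses} is \emph{equivalent} to your Key Lemma: every flag triangulation of $S^3$ with $\gamma_2 = 0$ other than the octahedral sphere contains an edge lying in no square, and any such edge automatically has square link and its collapse stays in the class. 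This is a clean reformulation --- it is essentially the implication the paper itself notes after \Cref{conj:collapses} --- but it only relocates the conjecture; it does not prove it.

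The genuine gap is therefore the Key Lemma, and the route you sketch for it does not carry weight. The proposed reduction ``$\gamma_2(T)=0$ and every edge in a square $\Rightarrow$ $T$ is a suspension'' is exactly the hard open problem in disguise, and no mechanism is offered for it: the paper's own example $T_{12}$ shows that ``every edge is contained in a square'' does not by itself force a suspension structure (nor does it force the full subcomplexes homeomorphic to $S^2$ to behave as in a suspension), so the hypothesis $\gamma_2=0$ would have to be used in an essential and currently unknown way; the appeal to vanishing of $\ell^2$-homology is suggestive but produces no combinatorial structure. Moreover, even the two-dimensional statement you reduce the suspension case to (a flag $2$-sphere in which every edge lies in a square is the octahedron) is only asserted to ``look within reach''; it is not proved in your proposal and not contained in the paper. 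So the proposal should be read as a correct equivalence plus an honest statement of where the difficulty lies, not as a proof of \Cref{conj:collapses}, which remains open.
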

	If \Cref{conj:collapses} were true, a positive answer to \Cref{q:converse} would give a negative answer to \Cref{q:collapse}.
	It would also imply that, if in a flag triangulation $T$ of $S^3$ every edge is contained in a square, then either $\gamma_2(T) > 0$ or $T$ is isomorphic to the octahedral $3$-sphere.
	
	The following question is related to some results and questions already considered in \Cref{sec:minors}.
	Recall that in \Cref{def:dominance}, where the dominance relation is introduced, we require the existence of a simplicial map of any nonzero degree.
	But in all cases in which we have concretely produced these maps, they all were of degree $\pm 1$.
	\begin{question}
		Let $T$ and $T'$ be two flag triangulations of $S^n$, and suppose that $T \gdom T'$.
		Is there a simplicial map from $T$ to $T'$ of degree $\pm 1$?
	\end{question}
	\Cref{cor:dim2_deg} gives a partial answer in the case $n = 2$.
	\begin{question}
		Let $T$ and $T'$ be two flag triangulations of $S^3$, and suppose that $T \ldom T'$.
		Is it true that $\gamma_2(T) \le \gamma_2(T')$?
	\end{question}
	
	Switching to another line of thoughts, we report the following result of Genevois.
	\begin{proposition}[{\cite{Genevois22}}]
		Let $W$ be a simplicial graph and $u \in V(W)$ a vertex.
		Let $W_u$ denote the graph obtained by gluing two copies of $W\setminus\{u\}$ along the link of $u$.
		Then the right-angled Coxeter group associated to $W_u$ is isomorphic to a subgroup of index $2$ of the one associated to $W$.
	\end{proposition}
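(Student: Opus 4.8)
The plan is to realise $\Gamma_{W_u}$ directly as the kernel of an explicit homomorphism $\Gamma_W \to \Z/2\Z$. Write $L = \Link{u}{W}$, a full subgraph of $W$ (hence of $W\setminus\{u\}$), and recall the standard splitting of a right-angled Coxeter group over a vertex link: since the closed star of $u$ is $\{u\}\cup L$ and $u$ commutes with exactly the generators in $L$, one has
\[ \Gamma_W \;=\; \Gamma_{W\setminus\{u\}}\ *_{\Gamma_L}\ \bigl(\langle u\rangle\times\Gamma_L\bigr), \]
with $\langle u\rangle\cong\Z/2\Z$ and $\Gamma_L$ included in $\Gamma_{W\setminus\{u\}}$ as a (retract) subcomplex subgroup. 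Define $\phi\colon\Gamma_W\to\Z/2\Z$ by sending $u$ to the nontrivial element and every other vertex of $W$ to $0$; all defining relators map to $0$, so $\phi$ is well defined, it is surjective, and I set $H=\ker\phi$, a subgroup of index $2$.

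The crux is to identify $H$. We have $\Gamma_{W\setminus\{u\}}\subseteq H$ (as $\phi$ kills it) and $H\cap\bigl(\langle u\rangle\times\Gamma_L\bigr)=\Gamma_L$. I would let $H$ act on the Bass--Serre tree of the displayed splitting and read off the quotient graph of groups. Using $\Gamma_W = H\sqcup uH$, a short orbit computation shows this quotient is a segment: the two endpoint groups are copies of $\Gamma_{W\setminus\{u\}}$ (one the ``identity'' copy, one conjugated by $u$), the middle vertex group is $\Gamma_L$, and both edge groups are $\Gamma_L$. The one point that needs real care --- and is essentially the whole content of the statement --- is that the edge coming from the nontrivial coset has stabiliser $u\Gamma_L u^{-1}=\Gamma_L$ (because $u$ centralises $\Gamma_L$), and that, after identifying $u\Gamma_{W\setminus\{u\}}u^{-1}$ with $\Gamma_{W\setminus\{u\}}$, the corresponding edge inclusion is again the standard inclusion $\Gamma_L\hookrightarrow\Gamma_{W\setminus\{u\}}$, with no twisting automorphism appearing. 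Collapsing the middle $\Gamma_L$-vertex (amalgamating $\Gamma_L$ with itself over $\Gamma_L$ does nothing) then gives $H\cong\Gamma_{W\setminus\{u\}}*_{\Gamma_L}\Gamma_{W\setminus\{u\}}$.

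It remains to recognise this amalgam as $\Gamma_{W_u}$. By construction $W_u$ is the union of two full copies of $W\setminus\{u\}$ that meet exactly in the full subgraph $L$, with no edge joining the two complementary parts; hence the same type of splitting (now over the separating full subgraph $L$) yields $\Gamma_{W_u}=\Gamma_{W\setminus\{u\}}*_{\Gamma_L}\Gamma_{W\setminus\{u\}}$. Comparing with the previous paragraph, $\Gamma_{W_u}\cong H$ has index $2$ in $\Gamma_W$, which is the claim. An alternative to the Bass--Serre step, reaching the same conclusion more concretely, is a Reidemeister--Schreier rewriting of $\Gamma_W$ for $H$ with transversal $\{1,u\}$: the new generators are the vertices of $W$ other than $u$ together with their $u$-conjugates, the conjugates of the $L$-vertices coincide with the originals because $u$ commutes with $L$, and the rewritten relators are exactly the defining relators of $\Gamma_{W_u}$. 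I do not anticipate a genuine obstacle on either route; the only delicate point is the bookkeeping of the edge monomorphisms (Bass--Serre version) or of the rewritten relators (Reidemeister--Schreier version), and in both cases it reduces to the single observation that $u$ centralises the link $L$ --- which is precisely why an untwisted gluing, rather than some twisted amalgam, shows up.
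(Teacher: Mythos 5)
The paper does not prove this proposition itself --- it is quoted from Genevois --- so there is no internal proof to compare against; judged on its own, your argument is correct and complete in outline. The homomorphism $\phi$ sending $u$ to the nontrivial element and all other generators to $0$ is clearly well defined on the Coxeter presentation, the splitting $\Gamma_W=\Gamma_{W\setminus\{u\}}*_{\Gamma_L}\bigl(\langle u\rangle\times\Gamma_L\bigr)$ is the standard decomposition over the separating full subgraph $L=\Link{u}{W}$, and the Bass--Serre (or Reidemeister--Schreier) bookkeeping you describe does go through: since $\Gamma_{W\setminus\{u\}}$ and $\Gamma_L$ lie in $H=\ker\phi$ and $H$ is normal of index $2$, one gets exactly two orbits of $\Gamma_{W\setminus\{u\}}$-vertices and of edges and one orbit of $\bigl(\langle u\rangle\times\Gamma_L\bigr)$-vertices, the middle vertex group equals the edge groups and collapses, and the absence of a twisting automorphism reduces, as you say, to $u$ centralising $\Gamma_L$. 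A slightly cleaner packaging of the same argument is the statement $\Gamma_W\cong\Gamma_{W_u}\rtimes\langle u\rangle$, with $u$ acting by swapping the two copies of $\Gamma_{W\setminus\{u\}}$, which is precisely what your Reidemeister--Schreier rewriting exhibits.
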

	In particular, we deduce the following.
	\begin{corollary}
		Let $T$ be a flag triangulation of $S^3$ and $u \in V(T)$ be a vertex.
		Let $T_u$ be the simplicial complex obtained by gluing two copies of $T\setminus\{u\}$ along the link of $u$.
		Then $T_u$ is a flag triangulation of $S^3$, and $\SV{\Dav{T_u}}>0$ if and only if $\SV{\Dav{T}}>0$.
	\end{corollary}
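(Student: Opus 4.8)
The plan is to deduce the statement from Genevois' proposition together with two classical facts about simplicial volume of closed manifolds: it is a homotopy invariant, and it is multiplicative under finite coverings (if $\hat N\to N$ is a $d$-fold cover of closed manifolds then $\SV{\hat N}=d\,\SV{N}$; see \cite{Gromov82}). The point is that, although $\Dav{T_u}$ is in general neither a cover of $\Dav{T}$ nor covered by it, the two manifolds become homotopy equivalent after passing to suitable finite covers, so positivity of simplicial volume is transported between them.

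First I would check that $T_u$ is a flag triangulation of $S^3$. Here $T\setminus\{u\}$ is $T$ with $u$ and its open star removed; it is a full subcomplex of $T$, hence flag, and is a triangulation of a $3$-ball whose boundary is the link of $u$ (a flag triangulation of $S^2$, since links in flag complexes are full subcomplexes). Gluing two copies along this common boundary gives a triangulation of $S^3$, and it is flag because any clique of $T_u$ is contained in a single copy of $T\setminus\{u\}$ — no interior vertex of one copy is adjacent to an interior vertex of the other. This is the same construction appearing in the proof of \Cref{prop:infinite_manysquare}. Since $T_u$ is flag, $\Dav{T_u}$ is a closed aspherical $4$-manifold, orientable by \Cref{lemma:orientations}, and the same is true of $\Dav{T}$.

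Next, by Genevois' proposition the right-angled Coxeter group $\Gamma_{T_u}$ is isomorphic to an index-$2$ subgroup of $\Gamma_T$. Recall that $\pi_1(\Dav{T})\cong G_T$ has finite index in $\Gamma_T$, and $\pi_1(\Dav{T_u})\cong G_{T_u}$ has finite index in $\Gamma_{T_u}$, hence also in $\Gamma_T$. Fixing the embedding $\Gamma_{T_u}\le\Gamma_T$, set $H=G_T\cap G_{T_u}\le\Gamma_T$, a torsion-free subgroup of finite index in both $G_T$ and $G_{T_u}$. Let $\hat N\to\Dav{T}$ and $\hat N'\to\Dav{T_u}$ be the connected finite covers corresponding to $H$ under the inclusions $H\le G_T$ and $H\le G_{T_u}$. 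Both $\hat N$ and $\hat N'$ are closed aspherical $4$-manifolds with fundamental group $H$, hence they are homotopy equivalent (aspherical spaces with isomorphic fundamental group are). Therefore $\SV{\hat N}=\SV{\hat N'}$, while multiplicativity under finite covers gives $\SV{\hat N}=[G_T:H]\cdot\SV{\Dav{T}}$ and $\SV{\hat N'}=[G_{T_u}:H]\cdot\SV{\Dav{T_u}}$. Since both indices are positive integers, $\SV{\Dav{T}}>0$ iff $\SV{\hat N}>0$ iff $\SV{\hat N'}>0$ iff $\SV{\Dav{T_u}}>0$, which is the claim.

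The only genuinely delicate point is the bookkeeping with subgroups: one must ensure that $H$, defined as an intersection inside $\Gamma_T$, really sits as a \emph{finite-index} subgroup of $G_{T_u}$ — this is where Genevois' index-$2$ statement is used, as it forces $G_{T_u}$ itself to have finite index in $\Gamma_T$ — and that $\hat N'$ is an honest closed manifold, which is automatic since finite covers of closed manifolds are closed. Everything else is routine.
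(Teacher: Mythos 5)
Your argument is correct and is exactly the deduction the paper leaves implicit after Genevois' proposition: commensurability of $\Gamma_T$ and $\Gamma_{T_u}$ gives a common torsion-free finite-index subgroup, the corresponding finite covers are homotopy-equivalent closed aspherical $4$-manifolds, and homotopy invariance plus multiplicativity of simplicial volume under finite covers transports positivity both ways; the flagness/sphere part matches the argument already given in the proof of \Cref{prop:infinite_manysquare}. No gaps.
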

	\begin{question}
		How does this relate to the dominance relation?
		For example, if $T_u$ dominates some other triangulation (say, $T_{10}$), is the same true for $T$?
	\end{question}
	If $T_u$ dominates $T_{10}$ and $T$ does not, we would conclude that $\Dav{T}$ has positive simplicial volume, and then deduce that some \minit{} triangulations distinct from $T_{10}$ give positive simplicial volume.
	
	\bibliographystyle{fram_plain}
	\bibliography{bibliography.bib}
\end{document}